\definecolor{darkred}{rgb}{0.82,0.15,0.20}
\definecolor{darkblue}{rgb}{0.82,0.15,0.12}
\numberwithin{equation}{section}
\numberwithin{figure}{section}
\numberwithin{table}{section}
\definecolor{lightgreen}{rgb}{0.22,0.50,0.25}
\definecolor{lightblue}{rgb}{0.22,0.45,0.70}
\numberwithin{equation}{section}
\newtheorem{remark}{Remark}[section]
\def\b{\boldsymbol}
\def\bbR{\mathbb{R}}
\def\I{\int_{\Omega}}
\def\bH{\mathbf{H}}
\def\Hdiv{\mathbb{H}({\mathbf{div}},\Omega)}
\def\L2{{\mathbf{L}}^2(\Omega)}
\def\La{\mathbb{L}^2_{\mathrm{skew}}(\Omega)}
\def\HG{\mathrm{H}^1_{\Gamma}(\Omega)}
\def\HS{\mathrm{H}^1_{\Sigma}(\Omega)}
\newcommand{\norm}[1]{\left\lVert#1\right\rVert}
\def\HNdiv{\mathbb{H}_{\Sigma}(\mathbf{div},\Omega)}
\newcommand\rH{\mathrm{H}}
\newcommand\rL{\mathrm{L}}
\newcommand\bsigma{\boldsymbol{\sigma}}
\newcommand\btau{\boldsymbol{\tau}}
\newcommand\beps{\boldsymbol{\varepsilon}}
\newcommand{\bgamma}{\boldsymbol{\gamma}}
\newcommand{\brho}{\boldsymbol{\rho}}
\newcommand\bzeta{\boldsymbol{\zeta}}
\newcommand{\bdiv}{\operatorname*{\mathbf{div}}}
\newcommand{\vdiv}{\operatorname*{div}}
\newcommand{\tr}{\operatorname*{tr}}
\newcommand{\bu}{\boldsymbol{u}}
\newcommand{\bv}{\boldsymbol{v}}
\newcommand\ff{\boldsymbol{f}}
\renewcommand\gg{\boldsymbol{g}}
\newcommand\nn{\boldsymbol{n}}
\newcommand\bt{\boldsymbol{t}}
\newcommand\cero{\boldsymbol{0}}
\title{Twofold saddle-point formulation of Biot poroelasticity \\with stress-dependent diffusion\thanks{\textbf{Updated:} \today.\funding{BGV acknowledges support from Vicerrector\'ia de Investigaci\'on, through the project 540-C0-202, Sede de Occidente, Universidad de Costa Rica. KAM acknowledges support from the Research Council of Norway, grants 300305 and 
	301013. RRB acknowledges support from the Monash Mathematics Research Fund S05802-3951284 and from the Ministry of Science and Higher Education of the Russian Federation within the framework of state support for the creation and development of World-Class Research Centers ``Digital biodesign and personalised healthcare'' No. 075-15-2020-926.}}}
\author{
Bryan G\'omez-Vargas\thanks{Secci\'on de Matem\'atica, Sede de Occidente, Universidad de Costa Rica, San Ram\'on, Alajuela, Costa Rica (\email{bryan.gomezvargas@ucr.ac.cr}).}
\and
Kent-Andr\'e Mardal\thanks{Department of Mathematics, Division of Mechanics, University of Oslo, Norway; and Simula Research Laboratory, 1325 Lysaker, Norway (\email{kent-and@math.uio.no}).}
\and
Ricardo Ruiz-Baier\thanks{School of Mathematics, 
Monash University, 9 Rainforest Walk, Melbourne 3800 VIC,  Australia; and 
  Institute of Computer Science and Mathematical Modelling, Sechenov University, Moscow, Russian Federation; and Universidad Adventista de Chile, Casilla 7-D, Chill\'an, Chile 
  (\email{ricardo.ruizbaier@monash.edu}).} 
\and 
Vegard Vinje\thanks{Department for Numerical Analysis and Scientific Computing, Simula Research Laboratory, Lysaker, Norway (\email{vegard@simula.no}).}
}
\date{\today}
\begin{document}
\maketitle

\begin{abstract}
We introduce a stress/total-pressure formulation for poroelasticity that includes the coupling with steady nonlinear diffusion modified by stress. The nonlinear problem is written in mixed-primal form, coupling a perturbed twofold saddle-point system with an elliptic problem. The continuous formulation is analysed in the framework of abstract fixed-point theory and Fredholm alternative for compact operators. A mixed finite element method is proposed and its stability and convergence analysis are carried out. We also include a few illustrative numerical tests.  The resulting model can be used to study waste removal in the brain parenchyma, where diffusion of a tracer alone or a combination of advection and diffusion are not sufficient to explain the alterations in rates of filtration observed in porous media samples. 
\end{abstract}
\begin{keywords} Stress-altered diffusion; Poroelasticity; Perturbed saddle-point; Mixed finite elements; Brain multiphysics. 
\end{keywords}
\begin{AMS}
 65N60, 92C50.
\end{AMS}

\section{Introduction}

Poroelastic structures are found in many applications of industrial and scientific relevance. Examples include the interaction between soft permeable tissue and blood flow, or the study of biofilm growth and distribution near fluids \cite{showalter05}. We are concerned with one particular application involving  the transport of cerebrospinal fluid (CSF) and tracer within the brain parenchyma, and how this can contribute to better explain the mechanisms that permit solutes from the brain interstitial space to move. These processes are key in eliminating waste from the brain and in maintaining a healthy function of the nervous system (see, for instance, the review \cite{pardridge16}), and which address important questions in the context of the overall glymphatic function \cite{iliff12}. 

Detailed maps of permeability and simulations using realistic hydrostatic pressure gradients and image-based reconstruction of intersitial space, suggest that diffusive effects in combination with advection along vasculature dominate over bulk flow \cite{holter17}. However it appears that (even heterogeneous) Fickian diffusion and convective effects only are not sufficient to explain the main features of the physiological propagation of solutes along the various elements of the brain structure \cite{croci19,valnes2020apparent}. We thus reconsider how other changes related to the mechanical state, such as the generation and nonuniform localisation of stresses in the brain, could impact the filtration properties at the spatio-temporal scales of interest for the application at hand. Pressure, strain, and stress changes  can affect the tissue microstructure by rearranging the local distribution of compliant arrays, opening and closing pores, as well as generating preferential paths for the accumulation of large solvent molecules \cite{zohdi99}. The combination of such effects can result in variations of averaged coefficients usually employed in classical models, such as diffusion (but also other parameters such as bulk moduli, relative permeability, and Biot-Willis coefficient \cite{ma17}). For this effect we propose a simplified phenomenological model for the steady interaction between poroelastic stress and diffusion of solutes (such as gadobutrol). The coupling of poromechanics and diffusion is a fundamental aspect not only for investigating the mechanical behaviour of the brain function, but also in the description and design of other complex systems such as tissue engineering scaffolds \cite{costanzo17}. 

Linear models of poroelasticity seem to be sufficient to capture the most important strain patterns when coupled to, e.g., immune system equations modelling the blood-brain barrier in the brain (see the comparison in \cite{lang16}). In the present model the linear poroelasticity equations are formulated using the stress as an unknown. This is of particular importance as this field will be central to the coupling with the diffusion process. The recent literature contains various formulations for Biot equations using poroelastic stress. For instance, including weakly symmetric four-field schemes \cite{baerland17}, adaptive mixed formulations \cite{elyes18}, multipoint stress-flux mixed methods \cite{ambar20}, and domain decomposition approaches \cite{jay21}. In contrast, here we employ a double saddle-point formulation designed to solve for total poroelastic stress, displacement, rotation, total pressure (drawing inspiration from \cite{lee17,oyarzua16} and generalising these formulations to include poroelastic and active stress), and fluid pressure. The Biot equations are then coupled with a nonlinear diffusion equation for the CSF tracer, and the discretisation proceeds with classical mixed-primal finite elements. Mixed-primal and fully mixed methods for stress-modified diffusion problems have been investigated in \cite{gatica18,gatica19}, and their analysis hinges on techniques using a separation of elastostatics and diffusion equations by means of a fixed-point scheme that requires additional assumptions on regularity. This is precisely the approach followed here. These models, as well as modification of transport properties in other soft tissues such as cardiac muscle \cite{propp20} allow to recover enhanced anisotropy and heterogeneity of {stress-assisted} diffusion. However, here the coupling with total poroelastic stress is used to \emph{reduce} or hinder diffusion patterns which have the potential to produce a much better match with experimental studies that indicate a decrease of effective diffusion by at least an order of magnitude with respect to the base state observed during sleep.  One of the main challenges of the work lies in deriving  a variational formulation involving total poroelastic stress, and  whose analysis does not need additional conditions on the Lam\'e parameters. The resulting form has the structure of a double saddle-point problem, where terms that explicitly involve $\lambda_s$, appear. Exploiting the compactness of these terms, an applying the Fredholm alternative together with a new extended version of the Babu\v ska-Brezzi theory, allow us to assert unique solvability robustly with respect to $\lambda_s$.

The content of this paper has been laid out as follows. Section~\ref{sec:model} provides details of the model, describing the components of the balance equations and stating boundary conditions. Section~\ref{sec:solvability} is devoted to the analysis of solvability of the weak form, using fixed-point arguments in conjunction with the Fredholm alternative and the Babu\v ska-Brezzi theory. We address in Section \ref{sec:fem} the solvability and stability analysis of the discrete problem. A priori error estimates are derived in Section~\ref{error_analysis},  and in Section~\ref{sec:results} we collect   computational results, consisting in verification of  convergence and simulation of different cases on simple geometries. 

\section{Governing equations}\label{sec:model}
Let us consider a Lipschitz polyhedral domain $\Omega\subset \bbR^d$, $d=2,3$ occupied by the brain parenchyma, here modelled as a mixture consisting of an elastic structure and an interconnected porous space fully saturated by interstitial fluid. We will denote by $\nn$ the outward unit normal vector on the boundary $\partial\Omega$. We also define the boundaries $\Gamma$ and $\Sigma$ where different types of boundary conditions will be applied. We assume that the parenchyma is  isotropic, that the deformation process is stationary, and that its constitutive stress-strain relation is linear. As usual in the framework of linear poroelasticity, the solid-fluid mixture is described in terms of the solid displacement $\bu:\Omega\to\mathbb{R}^d$ accounting for the movement of the material from its initial undeformed configuration, and the pore fluid pressure $p:\Omega\to\mathbb{R}$ (which, through Darcy's law, also defines the specific discharge or filtration velocity). The Biot system states momentum and mass balances 
\begin{equation}
   -\vdiv(\b \sigma)  = \rho_s \ff   \quad \mathrm{and}\quad 
    c_0p+\alpha\vdiv(\b u) - \vdiv \left(\frac{\mathbf{\kappa}}{\mu_f}\nabla p
    - \rho_f \boldsymbol{g}\right) = m \qquad \text{in $\Omega$,}   \label{poro1}
\end{equation}
respectively, where 
$\ff:\Omega \to \mathbb{R}^d$ is the vector field of body loads and 
$m:\Omega\to \mathbb{R}$ is a sources/sink of fluid. 
We also consider the presence of a CSF tracer within the poroelastic parenchymal domain. We denote its concentration by $\omega:\Omega\to\mathbb{R}$ and its movement in the parenchyma is governed by 
\begin{equation}\label{eq:w0}
\phi \omega - \vdiv(D(\bsigma) \nabla \omega) = \phi \ell \qquad \text{in $\Omega$},\end{equation}
where $\bsigma= 2\mu_s\beps(\bu) + (\lambda_s \mathrm{div}(\b u)- \alpha p - \beta \omega) \mathbb{I}$ is the poroelastic Cauchy stress tensor with $\beps(\bu):=\frac{1}{2}(\nabla \bu + \nabla\bu^{\tt t})$ denoting the infinitesimal strain tensor, the Lam\'e constants $\mu_s,\lambda_s>0$ for homogeneous and isotropic materials, the Biot-Willis constant $0<\alpha\leq 1$, the $d\times d$ identity matrix $\mathbb{I}$, and $\beta$ a scaling of active stress that indicates a two-way coupling between diffusion and motion. The right-hand side $\ell:\Omega\to \mathbb{R}$ is a non-negative drainage or source coefficient that encodes absorption from the lymph nodes and/or from capillaries \cite{croci20}, and $\phi$ is porosity.  In \eqref{poro1} $c_0$ is the mass storativity coefficient, and $\kappa$ is the permeability tensor (symmetric, uniformly positive definite, and bounded). The tensor function  $D:\mathbb{R}^{d\times d} \rightarrow \mathbb{R}^{d\times d}$ is a stress-dependent diffusivity accounting for an altered diffusion acting in the poroelastic domain. This term may assume the simple form
 \begin{equation}\label{eq:D0}
D(\bsigma) = \eta_0 D_0 + \eta_1 \exp(-\eta_1 \tr\bsigma),
 \end{equation}
with $D_0$ being the effective diffusion of typical solutes such as potassium ions, and $\eta_0,\eta_1$ modulating the intensity of the stress-altered diffusion. Other constitutive equations (including power-law, polynomial, and anisotropic forms) can be found in, e.g., \cite{gatica18}, whereas pressure-dependent diffusion models can be found in \cite{grigoreva19}.  Alternative constitutive relations could be given by a dependence on the volumetric part of the effective poroelastic stress, or on the porosity (which in the linear regime is the total amount of fluid) such as $D = D_0 - \eta( c_0 p + \alpha \vdiv\bu)$.  
On the boundary $\Gamma$ we prescribe tracer concentration, displacement and normal specific discharge;   whereas on $\Sigma$ we fix zero normal total stress, zero flux for the tracer, and prescribed fluid pressure. 

Besides the main variables we also employ  the total pressure (the volumetric contributions to the poroelastic Cauchy stress) $\widetilde{p}:=\alpha p-\lambda_s\,\mathrm{div}\,\b u$, the poroelastic stress tensor $\b \sigma=2\mu_s\b \varepsilon(\b u)-\widetilde{p}\mathbb{I}-\beta\omega \mathbb{I}$, and the tensor $\b\gamma$ (rotation Lagrange multiplier assisting the weakly imposition of the symmetry of   $\b\sigma$), and the strain   $\b t:=\b \varepsilon(\b u)=\nabla \b u-\b \gamma$. Therefore the governing equations consist in finding 
 $\bsigma$,  $\brho$,   $\bt$, 
  $\tilde p$,   $p$, and   $\omega$, satisfying
\begin{subequations}
\label{eq:model}
\begin{align}
\bsigma  = 2\mu_s \bt - \tilde{p} \mathbb{I} - \beta\omega\mathbb{I} \quad \text{in } \Omega,\quad \bsigma  = \bsigma^{\tt t}  \quad \text{in } \Omega, \quad  
- \bdiv \bsigma  = \rho_s \ff \quad \text{in } \Omega, \quad 
 \bt  = \beps(\bu) \quad \text{in } \Omega,\label{eq:sigma}\\
\quad  \bgamma = \nabla\bu - \bt \quad \text{in } \Omega,\quad  \left(c_0 + \frac{\alpha^2}{\lambda_s}\right)p - \frac{\alpha}{\lambda_s} \tilde{p} - \vdiv \left(\frac{\mathbf{\kappa}}{\mu_f}\nabla p
- \rho_f \boldsymbol{g}\right)  = m \quad \text{in } \Omega, \\
 \quad 
 \frac{1}{\lambda_s} \tilde{p} + \tr\bt - \frac{\alpha}{\lambda_s}  p   = 0 \quad \text{in } \Omega,\quad \bu  = \cero, \quad \frac{\kappa}{\mu_f}\nabla p\cdot\nn =0 \quad \text{on } \Gamma, \quad
 \bsigma\nn  = \cero,  \quad p=0 \quad \text{on } \Sigma,\label{eq:tildep}\\
\phi \omega -\vdiv( D(\bsigma) \nabla \omega)  = \phi\ell  \quad \text{in } \Omega,   \quad \omega  = 0  \quad \text{on } \Gamma,\quad  \text{and} \quad D(\bsigma)\nabla \omega\cdot\nn   = 0 \quad \text{on } \Sigma.\label{bc:Sigma}
\end{align}\end{subequations}
The second and third equations in \eqref{eq:sigma} state, respectively, the conservation of angular and linear momentum, while the last equation in \eqref{eq:sigma} and the first equality in \eqref{eq:tildep} encode other constitutive relations. 
Homogeneous boundary data have been assumed only for sake of conciseness, but they can be generalised to the non-homogeneous case by using classical lifting arguments. An advantage of using this modified form of the system is that all equations are robust with respect to the Lam\'e parameters. 
 
\section{Continuous mixed-primal formulation and well-posedness analysis}\label{sec:solvability}
Let us define the tensor functional spaces for stress and rotations 
\[\HNdiv:=\{\btau \in \Hdiv: \btau \nn =\cero \ \mathrm{on}\, \Sigma\}, \  
\La:=\{\b \eta \in \mathbb{L}^2(\Omega): \b \eta + \b \eta^{\mathrm{t}}\!=\cero\}.\]
A weak formulation for the nonlinearly coupled problem \eqref{eq:sigma}-\eqref{bc:Sigma} can be derived as usual, and it consists in finding $\bigl(\bt,(\bsigma,\tilde{p}),(\bu,\bgamma),p,\omega\bigr) \in \mathbb{L}^2(\Omega)\times [\HNdiv\times \mathrm{L}^2(\Omega)]\times[\L2\times\La] \times \HS \times \HG$ such that 
\begin{subequations}
\begin{align}
-\bigl(c_0+\frac{\alpha^2}{\lambda_s}\bigr)\int_{\Omega}p q +\frac{\alpha }{\lambda_s}\int_{\Omega}\widetilde{p}q  - \frac{1}{\mu_f}\int_{\Omega}\boldsymbol{\kappa} \nabla p\cdot \nabla q =  \rho_f\int_{\Omega}\boldsymbol{\kappa}\b g\cdot\nabla q -\int_{\Omega}mq \quad   \forall \, q \in \HS, \label{eq:poro-equil}\\
\label{eq:strain}
-\int_{\Omega}\b t\colon \b \tau-\int_{\Omega}\b u\cdot \mathbf{div}\,\b \tau-\int_{\Omega}\b \gamma\colon\b \tau =0  \;\;\; \forall\b  \tau \!\in \mathbb{H}_{\Sigma}(\mathbf{div};\Omega),\quad -\int_{\Omega} \b v \cdot\mathbf{div}\,\b \sigma  = \rho_s\int_{\Omega} \ff\cdot \b v  \quad  \forall \, \b v \in \mathbf{L}^2(\Omega),\\
\label{eq:trace}
-\frac{1}{\lambda_s}\int_{\Omega}\widetilde{p}\,\widetilde{q}-\int_{\Omega}\mathrm{tr}(\b t)\widetilde{q}+\frac{\alpha}{\lambda_s}\int_{\Omega}p\widetilde{q}  =0\quad   \forall\, \widetilde{q} \in \mathrm{L}^2(\Omega),\quad 
-\int_{\Omega}\b \sigma\colon\b \eta =0 \quad    \forall\,\b \eta \in \mathbb{L}_{\mathrm{skew}}^2(\Omega),\\
\label{eq:constitutive-poro}
2\mu_s \int_{\Omega}\b t\colon\b r - \int_{\Omega} \b \sigma\colon\b r -\int_{\Omega} \widetilde{p}\,\mathrm{tr}(\b r)  = \beta\int_{\Omega}\omega \mathrm{tr}(\b r)\quad   \forall \, \b r \in \mathbb{L}^2(\Omega),\\
\label{eq:diff}
\int_{\Omega}\phi\omega\theta+\int_{\Omega} D(\b \sigma)\nabla \omega \cdot \nabla \theta =  \int_{\Omega}\ell \phi \theta \quad   \forall \, \theta \in \mathrm{H}^1_\Gamma(\Omega).
\end{align}
\end{subequations}

We further  assume that $\b\kappa$ is uniformly bounded and positive definite, and that  $D(\bsigma)$ is of class $C^1$, uniformly positive definite,  Lipschitz continuous and that satisfies a particular bounding property: There exist positive constants $\kappa_1, \kappa_2$, and $d_1, d_2, d_3$, such that
	\begin{equation}
\label{prop-D}	\kappa_1|\bv|^2 \leq \bv^{\mathrm{t}}\kappa(\cdot)\bv \leq \kappa_2|\bv|^2,\quad \mathrm{and}\quad   d_1|\bv|^2 \leq \bv^{\mathrm{t}}D(\cdot)\bv \leq d_2 |\bv|^2, \quad 
	|D(\bsigma)-D(\bzeta)|\leq d_3|\bsigma - \bzeta|, \end{equation}
for all $ \bv \in\mathbb{R}^d$, and for all $\bsigma,\bzeta\in\bbR^{d\times d}$.
Next, we regroup unknowns and spaces in \eqref{eq:poro-equil}-\eqref{eq:constitutive-poro}   as follows: 
\begin{equation*}\vec{\b \sigma}:=(\b \sigma, \widetilde{p}),\;\; \vec{\b \tau}:=(\b \tau, \widetilde{q}),\;\; \vec{\b u}:=(\b u, \b \gamma), \quad \vec{\b v}:=(\b v, \b \eta),\;\; \mathrm{and}\quad \mathbf{X}:= \mathbb{H}_{\Sigma}(\mathbf{div};\Omega)\times \mathrm{L}^2(\Omega),\;\; \mathbf{M}:=\mathbf{L}^2(\Omega)\times \mathbb{L}^2_\mathrm{skew}(\Omega).
\end{equation*}
Then, given $(H_{\omega},F,G,J)\in \mathbb{L}^2(\Omega)'\times\mathbf{M}'\times \mathrm{H}^1_{\Sigma}(\Omega)'\times \mathrm{H}^1_\Gamma(\Omega)'$, we seek $(\b t, \vec{\b \sigma},\vec{\b u},p, \omega)\in \mathbb{L}^2(\Omega)\times\mathbf{X}\times\mathbf{M}\times\mathrm{H}^1_{\Sigma}(\Omega)\times \mathrm{H}^1_{\Gamma}(\Omega)$ such that
\begin{subequations}
\begin{alignat}{11}
&[A(\b t), \b r]&+\quad&[B^*(\vec{\b \sigma}),\b r]&&&&&=& \;\;[H_{\omega},\b r],&	\label{VF-poro_1}\\
&[B(\b t), \vec{\b \tau}]\;\;\,&-\quad& [C(\vec{\b \sigma}),\vec{\b \tau}]\;\;\,&+&\;\; [B^*_1(\vec{\b u}),\vec{\b \tau}]\;\;&+&\;\;[B_2^*(p),\vec{\b \tau}]\;&=&\;\;\mathrm{O},&\label{VF-poro_2}\\
& &&[B_1(\vec{\b \sigma}),\vec{\b v}]&&&&&=&\;\;[F,\vec{\b v}],&\label{VF-poro_3}\\
&&&[B_2(\vec{\b \sigma}),q]&&&-&\;\;[D(p),q]&=& \;\;[G,q],& \label{VF-poro_4}\\
&&&&&&&[\mathscr{A}_{\b \sigma}(\omega),\theta]&=&\;\; [J,\theta],& \label{VF_diff}
\end{alignat}\end{subequations}
for all $(\b r,\vec{\b \tau},\vec{\b v},q,\theta )\in \mathbb{L}^2(\Omega)\times\mathbf{X}\times \mathbf{M}\times \mathrm{H}^1_{\Sigma}(\Omega)\times \mathrm{H}^1_{\Gamma}(\Omega)$, where 
  the linear bounded operators $A:\mathbb{L}^2(\Omega)\rightarrow \mathbb{L}^2(\Omega)'$, $C:\mathbf{X}\rightarrow \mathbf{X}'$, $B: \mathbb{L}^2(\Omega)\rightarrow \mathbf{X}'$, $B_1:\mathbf{X}\rightarrow \mathbf{M}'$, $B_2:\mathbf{X}\rightarrow \mathrm{H}^1_{\Sigma}(\Omega)',$ $D:\mathrm{H}^1_{\Sigma}(\Omega)\rightarrow\mathrm{H}^1_{\Sigma}(\Omega)'$, $B^*:\mathbf{X}\rightarrow\mathbb{L}^2(\Omega)'$, $B_1^*:\mathbf{M}\rightarrow\mathbf{X}'$, $B_2^*:\mathrm{H}^1_{\Sigma}(\Omega)\rightarrow \mathbf{X}'$, and $\mathscr{A}_{\b \sigma}:\mathrm{H}^1_\Gamma(\Omega)\rightarrow \mathrm{H}^1_\Gamma(\Omega)'$ are defined as
\begin{subequations}
\begin{align}
[A({\b t}),{\b r}]:=2\mu_s \int_{\Omega}\b t\colon\b r, \quad 
&[B({\b r}),\vec{\b \tau}]:= - \int_{\Omega} \b \tau\colon\b r -\int_{\Omega} \widetilde{q}\,\mathrm{tr}(\b r),\quad [B_1(\vec{\b \tau}),\vec{\b v}]:= -\int_{\Omega}\b v\cdot \mathbf{div}\,\b \tau-\int_{\Omega}\b \eta\colon\b \tau,\label{def-A-B}\\
[C(\vec{\b \sigma}),\vec{\b \tau}]:= \frac{1 }{\lambda_s}\int_{\Omega}\widetilde{p}\,\widetilde{q},&\quad 
[B_2(\vec{\b \tau}),q]:=\frac{\alpha }{\lambda_s}\int_{\Omega}\widetilde{q}q, \quad 
[D(p),q]:= \left(c_0+\frac{\alpha^2}{\lambda_s}\right)\int_{\Omega}p q + \frac{1}{\mu_f}\int_{\Omega}\boldsymbol{\kappa} \nabla p\cdot \nabla q,\label{def-C-B2-D}\\
[\mathscr{A}_{\b \sigma}(\omega),\theta]:=&\int_{\Omega}\phi\omega\theta+\int_{\Omega} D(\b \sigma)\nabla \omega \cdot \nabla \theta,\quad 
[H_\omega,\b r]:= \beta\int_{\Omega}\omega \mathrm{tr}(\b r),\label{def-A-H}\\ 
[F,\vec{\b v}]:=&\rho_s\int_{\Omega} \ff\cdot \b v, \quad 
[G,q]:=-\int_{\Omega}mq + \rho_f \int_{\Omega}\boldsymbol{\kappa} \b g\cdot\nabla q, \quad 
[J,\theta]:= \int_{\Omega} \ell\phi\theta,\label{def-F-G-G1}
\end{align}
\end{subequations}
for each ${\b t}, {\b r} \in \mathbb{L}^2(\Omega), \;\vec{\b \sigma},\vec{\b \tau}\in \mathbf{X}$, $\vec{\b u}, \vec{\b v} \in \mathbf{M},$ $p,q\in \mathrm{H}^1_\Sigma(\Omega)$ and $\omega, \theta\in \mathrm{H}^1_\Gamma(\Omega).$ The symbol $\mathrm{O}$ stands here for the null functional, and $[\cdot,\cdot]$ denotes the 
duality pairing induced by the operators and functionals.

\subsection{Fixed-point-strategy}\label{section-a-fixed-point-strategy} In this section, we utilise a fixed-point strategy to prove that (\ref{VF-poro_1})-\eqref{VF_diff} is well-posed. Let $\textbf{S}:\mathrm{H}_{\Gamma}^1(\Omega)\to \mathbb{L}^2(\Omega)\times\mathbf{X}\times\mathbf{M}\times\mathrm{H}^1_{\Sigma}(\Omega)$ be the operator defined by 
\begin{equation*}
\textbf{S}(\omega) := (\textbf{S}_1(\omega), (\textbf{S}_2(\omega),\textbf{S}_3(\omega)),(\textbf{S}_4(\omega),\textbf{S}_5(\omega)),\textbf{S}_6(\omega))=(\b t, (\b \sigma, \widetilde{p}),(\b u,\b \gamma),p)\in \mathbb{L}^2(\Omega)\times\mathbf{X}\times\mathbf{M}\times\mathrm{H}^1_{\Sigma}(\Omega),\end{equation*}
where $(\b t, (\b \sigma, \widetilde{p}),(\b u,\b \gamma),p)$ is the unique solution of (\ref{VF-poro_1})-\eqref{VF-poro_4} with $\omega$ given. In turn, we let ${\widetilde{\mathbf S}} : \mathbb{H}_{\Sigma}(\mathbf{div},\Omega)
\to \mathrm{H}^1_{\Gamma}(\Omega)$ be the operator 
\[
{\widetilde{\mathbf S}}(\b \sigma) \,:= \, \omega 
\quad\forall\,\b \sigma\,\in\, \mathbb{H}_{\Sigma}(\mathbf{div},\Omega),\]
where $\omega$ is the unique solution of (\ref{VF_diff}) with $\b \sigma$ given. Then, we define the operator $\mbox{\textbf{{T}}}:\mathrm{H}^1_{\Gamma}(\Omega)\to \mathrm{H}^1_{\Gamma}(\Omega)$ as
\[
\mbox{\textbf{{T}}}(\omega):=\widetilde{\mathbf S}(\mathbf S_2(\omega))\quad\forall\,\omega \in \mathrm{H}^1_{\Gamma}(\Omega), 
\]
and realise that solving (\ref{VF-poro_1})-\eqref{VF_diff} is equivalent to finding  $\omega\in \mathrm{H}^1_{\Gamma}(\Omega)$ such that 
\begin{equation}\label{def-T}
\mbox{\textbf{{T}}}(\omega)=\omega. 
\end{equation}
\subsection{Well-posedness of the uncoupled problems}\label{well-posedness-analysis}
In this section, we analyse the solvability of problems defining $\mathbf{S}$ and $\widetilde{\mathbf{S}}$. In view of the analysis of Section \ref{error_analysis} and Lemma \ref{lemma_main} below, the well-posedness and stability   of the uncoupled problem related with the operator $\textbf{S}$ for a given $\omega \in \mathrm{H}^1_\Gamma(\Omega)$, will be developed considering generic functionals $\widetilde{H}_\omega, \widetilde{F}_1, \widetilde{F} $ and $\widetilde{G}$, instead of $H_\omega, \mathrm{O}, F$ and $G$, respectively, in \eqref{VF-poro_1}-\eqref{VF-poro_4}. 
 We begin by investigating the well-posedness of this uncoupled problem. First, we recall from \cite{brezzi91} the decomposition 
\[\mathbb{H}(\mathbf{div},\Omega)=\mathbb{H}_0(\mathbf{div},\Omega)\oplus\mathbb{R}\mathbb{I},\quad \text{with}\quad 
\mathbb{H}_0(\mathbf{div},\Omega):=\left\{\b \tau\in \mathbb{H}(\mathbf{div},\Omega):\ \int_{\Omega}\mathrm{tr}(\b \tau)=0\right\}.\]
That is, for each $\b \tau\in \mathbb{H}(\mathbf{div},\Omega)$ there exist unique 
\begin{equation}\label{eq:deco}
\b \tau_0:=\b \tau-{\displaystyle\left\{\frac{1}{d|\Omega|}\int_{\Omega} \mathrm{tr}(\b \tau)\right\}}\mathbb{I}\in \mathbb{H}_0(\mathbf{div},\Omega) \quad \text{and} \quad 
\hat{d}:=\frac{1}{d|\Omega|}\int_{\Omega} \mathrm{tr}(\b \tau)\in \mathbb{R},\end{equation}
such that $\b \tau=\b \tau_0+\hat{d}\mathbb{I}$. Moreover, we recall the following results which will be useful in the forthcoming analysis. We remit to \cite[Lemma 2.3]{gatica06}, and \cite[Lemma 2.4]{gatica06}, respectively, for further details.
\begin{lemma}\label{lem:inequality}
	There exists $c_1>0$, depending only on $\Omega$, such that 
	\begin{equation}\label{inequality_1}
	c_1\Vert\b \tau_0\Vert^2_{\mathbb{L}^2(\Omega)}\leq \Vert\b \tau^{\mathrm{d}}\Vert^2_{\mathbb{L}^2(\Omega)}+\Vert\mathbf{div}(\b \tau)\Vert^2_{\mathbb{L}^2(\Omega)}\quad \forall\, \b \tau \in \mathbb{H}(\mathbf{div},\Omega).	
	\end{equation}
\end{lemma}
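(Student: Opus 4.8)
The plan is to reduce the stated estimate to a bound on the trace of $\btau$ and then invoke the classical surjectivity of the divergence operator. First I would note that both sides are unaffected by the splitting $\btau=\btau_0+\hat d\,\bI$ of \eqref{eq:deco}: since $\bI^{\mathrm{d}}=\cero$ and $\bdiv(\hat d\,\bI)=\cero$, one has $\btau^{\mathrm{d}}=\btau_0^{\mathrm{d}}$ and $\bdiv\btau=\bdiv\btau_0$, while by construction $\int_{\Omega}\tr(\btau_0)=0$, so that $\btau_0\in\Hodiv$. Hence it suffices to establish the inequality for a generic $\bzeta\in\Hodiv$, i.e.\ for tensors of zero mean trace. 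Using the pointwise orthogonal decomposition $\bzeta=\bzeta^{\mathrm{d}}+\tfrac1d\tr(\bzeta)\,\bI$ together with $\bI\colon\bI=d$, I would record the identity $\Vert\bzeta\Vert^2_{\L2}=\Vert\bzeta^{\mathrm{d}}\Vert^2_{\L2}+\tfrac1d\Vert\tr(\bzeta)\Vert^2_{\mathrm{L}^2(\Omega)}$, which shows that the whole estimate reduces to controlling $\Vert\tr(\bzeta)\Vert_{\mathrm{L}^2(\Omega)}$ by $\Vert\bzeta^{\mathrm{d}}\Vert_{\L2}+\Vert\bdiv\bzeta\Vert_{\L2}$.

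The central ingredient is the existence of a right inverse of the divergence: because $\tr(\bzeta)\in\mathrm{L}^2(\Omega)$ has zero mean over $\Omega$, there is $\bv\in\mathbf{H}^1_0(\Omega)$ with $\vdiv\bv=\tr(\bzeta)$ in $\Omega$ and $\Vert\bv\Vert_{\mathbf{H}^1(\Omega)}\le C\,\Vert\tr(\bzeta)\Vert_{\mathrm{L}^2(\Omega)}$, with $C$ depending only on $\Omega$. This is the step I expect to be the crux, since it is exactly where the geometry of $\Omega$ (a bounded Lipschitz domain) and the zero-mean constraint enter; it may be taken as known from the theory underlying the Stokes inf--sup condition (the Bogovskii operator).

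With such $\bv$ at hand I would use $\vdiv\bv=\bI\colon\nabla\bv$ and the splitting of $\bzeta$ to obtain the pointwise identity $\tfrac1d\,\tr(\bzeta)\,\vdiv\bv=\bzeta\colon\nabla\bv-\bzeta^{\mathrm{d}}\colon\nabla\bv$. Integrating over $\Omega$, substituting $\vdiv\bv=\tr(\bzeta)$, and invoking Green's formula $\int_{\Omega}\bzeta\colon\nabla\bv=-\int_{\Omega}\bdiv\bzeta\cdot\bv$ (valid because $\bv$ vanishes on $\partial\Omega$) yield
\[
\tfrac1d\Vert\tr(\bzeta)\Vert^2_{\mathrm{L}^2(\Omega)}=-\int_{\Omega}\bdiv\bzeta\cdot\bv-\int_{\Omega}\bzeta^{\mathrm{d}}\colon\nabla\bv.
\]
Cauchy--Schwarz bounds the right-hand side by $\bigl(\Vert\bdiv\bzeta\Vert_{\L2}+\Vert\bzeta^{\mathrm{d}}\Vert_{\L2}\bigr)\Vert\bv\Vert_{\mathbf{H}^1(\Omega)}$; inserting the stability estimate for $\bv$ and cancelling one power of $\Vert\tr(\bzeta)\Vert_{\mathrm{L}^2(\Omega)}$ gives $\Vert\tr(\bzeta)\Vert_{\mathrm{L}^2(\Omega)}\le C d\bigl(\Vert\bdiv\bzeta\Vert_{\L2}+\Vert\bzeta^{\mathrm{d}}\Vert_{\L2}\bigr)$. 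Squaring, combining with the norm identity above, and translating back via $\bzeta^{\mathrm{d}}=\btau^{\mathrm{d}}$, $\bdiv\bzeta=\bdiv\btau$ and $\bzeta=\btau_0$ delivers \eqref{inequality_1} with a constant $c_1>0$ depending only on $\Omega$. The integration by parts and the algebraic identities are routine; the sole non-elementary ingredient is the controlled right inverse of the divergence.
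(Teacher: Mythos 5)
Your proof is correct, and it coincides with the argument the paper relies on: the paper itself gives no proof for this lemma but simply points to \cite[Lemma 2.3]{gatica06}, and your reconstruction --- reduce to the zero-mean-trace part $\btau_0$, use the orthogonal deviatoric/trace splitting, and bound $\Vert\tr(\btau_0)\Vert_{\mathrm{L}^2(\Omega)}$ via the controlled right inverse of the divergence (Bogovskii/Stokes inf-sup) plus integration by parts --- is precisely the classical proof underlying that citation.
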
 
\begin{lemma}
	There exists $c_2>0$, depending only on $\Sigma$ and $\Omega$, such that 
	\begin{equation}\label{inequality_2}
	c_2\Vert\b \tau\Vert^2_{\mathbb{H}(\mathbf{div},\Omega)}\leq \Vert\b \tau_0\Vert^2_{\mathbb{H}(\mathbf{div},\Omega)}\quad \forall\, \b \tau \in \mathbb{H}_{\Sigma}(\mathbf{div},\Omega).	
	\end{equation}
\end{lemma}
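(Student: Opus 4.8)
The plan is to derive the equivalence from the $\mathbb{L}^2$-orthogonality of the splitting $\btau=\btau_0+\hat d\mathbb{I}$ together with the constraint $\btau\nn=\cero$ on $\Sigma$ encoded in $\HNdiv$. First I would record two elementary facts. Since $\hat d\mathbb{I}$ is constant one has $\bdiv\,\btau=\bdiv\,\btau_0$, so the two fields share the same divergence; and since $\btau_0\in\mathbb{H}_0(\mathbf{div},\Omega)$ forces $\int_\Omega\btau_0\colon\mathbb{I}=\int_\Omega\tr(\btau_0)=0$, the summands $\btau_0$ and $\hat d\mathbb{I}$ are orthogonal in $\mathbb{L}^2(\Omega)$ and
\[
\Vert\btau\Vert^2_{\mathbb{H}(\mathbf{div},\Omega)}=\Vert\btau_0\Vert^2_{\mathbb{H}(\mathbf{div},\Omega)}+\hat d^{\,2}\,d\,\vert\Omega\vert .
\]
Hence the assertion reduces to a single scalar estimate $\hat d^{\,2}\le C\,\Vert\btau_0\Vert^2_{\mathbb{H}(\mathbf{div},\Omega)}$ with $C$ depending only on $\Omega$ and $\Sigma$, the constant in the lemma then being $c_2=(1+C\,d\,\vert\Omega\vert)^{-1}$.

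The heart of the matter is to recover $\hat d$ from the boundary condition. I would fix once and for all an auxiliary field $\bv\in[\mathrm{H}^1_{\Gamma}(\Omega)]^d$ (that is, $\bv=\cero$ on $\Gamma$) normalised by $\int_{\Sigma}\bv\cdot\nn\,\dd s=1$, and invoke the Green formula
\[
\langle\btau\nn,\bv\rangle_{\partial\Omega}=\int_\Omega\btau\colon\nabla\bv+\int_\Omega(\bdiv\,\btau)\cdot\bv .
\]
By the very definition of $\HNdiv$ as those fields whose normal trace is annihilated by every test function vanishing on $\Gamma$, the left-hand side is zero. Substituting $\btau=\btau_0+\hat d\mathbb{I}$, using $\bdiv(\hat d\mathbb{I})=\cero$ and $\int_\Omega\mathbb{I}\colon\nabla\bv=\int_\Omega\vdiv\,\bv=\int_{\partial\Omega}\bv\cdot\nn=\int_{\Sigma}\bv\cdot\nn=1$, I arrive at
\[
\hat d=-\int_\Omega\btau_0\colon\nabla\bv-\int_\Omega(\bdiv\,\btau_0)\cdot\bv ,
\]
so that Cauchy--Schwarz yields $\vert\hat d\vert\le\Vert\bv\Vert_{[\mathrm{H}^1(\Omega)]^d}\Vert\btau_0\Vert_{\mathbb{H}(\mathbf{div},\Omega)}$, the required bound with $C=\Vert\bv\Vert^2_{[\mathrm{H}^1(\Omega)]^d}$.

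What remains — and what I expect to be the delicate point — is to produce the normalising field $\bv$, and this is exactly where the standing assumption that $\Sigma$ has positive surface measure is indispensable: one needs an $[\mathrm{H}^1(\Omega)]^d$ field vanishing on $\Gamma$ with nonzero net normal flux across $\Sigma$. I would obtain it by choosing a trace $\gg\in[\mathrm{H}^{1/2}(\partial\Omega)]^d$ supported in $\Sigma$, away from the interface $\overline\Gamma\cap\overline\Sigma$, with $\int_{\Sigma}\gg\cdot\nn\,\dd s\neq0$, and lifting it through a bounded right inverse of the trace operator; after rescaling, $\bv$ — and therefore $C$ and $c_2$ — depends only on $\Omega$ and $\Sigma$. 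The genuine technical work thus sits in this construction and in the rigorous interpretation of the normal-trace pairing restricted to $\Sigma$ within the fractional-order trace spaces, whereas the orthogonal splitting and the Green-formula computation are routine. The hypothesis $\vert\Sigma\vert>0$ is essential rather than cosmetic: were $\Sigma$ empty, the field $\btau=\hat d\mathbb{I}$ would satisfy $\btau_0=\cero$ while $\btau\neq\cero$, and the inequality could not hold.
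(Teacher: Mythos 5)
Your proposal is correct and follows essentially the same route as the paper, which gives no self-contained proof but defers to \cite[Lemma 2.4]{gatica06}: the argument behind that citation is precisely your decomposition $\btau=\btau_0+\hat d\,\mathbb{I}$ combined with bounding $\hat d$ through the normal-trace pairing against a fixed $\mathrm{H}^1$ field vanishing on $\Gamma$ with unit net flux across $\Sigma$. The orthogonality identity, the reduction to the scalar estimate on $\hat d$, the Green-formula computation, and your closing observation that $|\Sigma|>0$ is essential are all sound.
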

On the other hand, we can notice that the kernel of ${B}_1$ is given by  
\begin{equation*}
	\mathbb{V}:=\left\{\vec{\b \tau}\in \mathbf{X}: [B_1(\vec{\b \tau}),\vec{\b v}]=0\ \forall\,\vec{\b v}\in \mathbf{M}\right\} =\left\{\vec{\b \tau}\in \mathbf{X}: \mathbf{div}\,{\b \tau}=0 \ \mathrm{and} \ {\b \tau}={\b \tau}^{\mathrm{t}}\  \mathrm{in }\ \Omega\right\}.
\end{equation*}
The following two results will serve to establish the inf-sup conditions for $B$ and $B_1$ (cf. second and third equations in \eqref{def-A-B}).

\begin{lemma}\label{lem_inf_sup_B1}
	There exists ${\beta_1}>0$, such that 
	\begin{equation*}
	\sup_{\vec{\b \tau}\in \mathbf{X} \setminus\{\b 0\}} \frac{ [B_1(\vec{\b \tau}),\vec{\b v}]}{\norm{\vec{\b \tau}}_{\mathbf{X}}}\geq \hat{\beta}_1\norm{\vec{\b v}}_{\mathbf{M}}\quad \forall\, \vec{\b v}\in  \mathbf{M}.
	\end{equation*}
\end{lemma}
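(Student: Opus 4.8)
The plan is to verify the inf-sup inequality by exhibiting, for each fixed $\vec{\b v}=(\b v,\b \eta)\in\mathbf{M}$, a single competitor $\vec{\b\tau}\in\mathbf{X}$ whose associated quotient is already bounded below by a constant multiple of $\norm{\vec{\b v}}_{\mathbf{M}}$. The first simplification is that the scalar component $\widetilde q$ of $\vec{\b\tau}$ does not appear in $B_1$ (cf.\ the third identity in \eqref{def-A-B}); hence I may take $\widetilde q=0$, so that $\norm{\vec{\b\tau}}_{\mathbf{X}}=\norm{\b\tau}_{\Hdiv}$, and the whole task reduces to a construction in $\HNdiv$. Concretely, it suffices to produce $\b\tau\in\HNdiv$ with $\bdiv\b\tau=-\b v$ in $\Omega$ and skew part $\tfrac12(\b\tau-\b\tau^{\tt t})=-\b\eta$, subject to the stability bound $\norm{\b\tau}_{\Hdiv}\le c\,\norm{\vec{\b v}}_{\mathbf{M}}$. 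Indeed, for such a $\b\tau$, and using that $\b\eta\colon\b\tau=\b\eta\colon\tfrac12(\b\tau-\b\tau^{\tt t})$ because $\b\eta$ is skew-symmetric, the numerator evaluates exactly to $-\int_\Omega\b v\cdot\bdiv\b\tau-\int_\Omega\b\eta\colon\b\tau=\norm{\b v}^2_{\L2}+\norm{\b\eta}^2_{\La}=\norm{\vec{\b v}}^2_{\mathbf{M}}$, and dividing by $\norm{\b\tau}_{\Hdiv}\le c\,\norm{\vec{\b v}}_{\mathbf{M}}$ yields the claim with $\hat\beta_1=1/c$.

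To build such a $\b\tau$ I would split the construction into a divergence part and a skew correction. First I would solve the auxiliary vector problem: find $\b z\in[\HG]^d$ with $\int_\Omega\nabla\b z\colon\nabla\b w=\int_\Omega\b v\cdot\b w$ for all $\b w\in[\HG]^d$, which is well posed by the Lax--Milgram lemma (coercivity follows from the Poincar\'e inequality on $[\HG]^d$, as $\Gamma$ has positive surface measure). Setting $\b\tau_1:=\nabla\b z$ and integrating by parts, and using that the test functions vanish on $\Gamma$, gives $\bdiv\b\tau_1=-\b v$ in $\Omega$ and $\b\tau_1\nn=\cero$ on $\Sigma$, together with $\norm{\b\tau_1}_{\Hdiv}\le c\,\norm{\b v}_{\L2}$. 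This settles the divergence and the boundary condition on $\Sigma$ but leaves the skew part uncontrolled; note that a one-shot ansatz such as $\b\tau=\nabla\b z\pm\b\eta$ would fix the divergence but leave an uncontrolled cross term $\int_\Omega\b\eta\colon\nabla\b z$, which is precisely why the rotation must be handled by a separate divergence-free corrector. Secondly, then, I would construct $\b\tau_2\in\HNdiv$ with $\bdiv\b\tau_2=\cero$, $\tfrac12(\b\tau_2-\b\tau_2^{\tt t})=-\b\eta-\tfrac12(\b\tau_1-\b\tau_1^{\tt t})$, and $\norm{\b\tau_2}_{\Hdiv}\le c\,(\norm{\b\eta}_{\La}+\norm{\b v}_{\L2})$; then $\b\tau:=\b\tau_1+\b\tau_2$ has all the required properties. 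If the corrector is normalised to the mean-free subspace, I would finally use \eqref{inequality_2} to pass from the bound on $\b\tau_0$ to the full $\Hdiv$-norm of the competitor in $\HNdiv$.

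The hard part will be the skew correction: producing a \emph{divergence-free} tensor in $\HNdiv$ whose skew part equals a prescribed $\La$ field, uniformly bounded and respecting the traction-free condition on $\Sigma$. This is exactly the surjectivity of the operator $\b\tau\mapsto\tfrac12(\b\tau-\b\tau^{\tt t})$ restricted to the kernel of $\bdiv$, which is classical in the theory of weakly symmetric mixed elasticity. I would realise it through a tensor potential (a matrix curl of a vector or scalar potential obtained from an auxiliary elliptic problem), whose regularity delivers the stability estimate, or else invoke the known surjectivity result directly. Equivalently, the entire lemma can be phrased as the statement that the combined map $\b\tau\mapsto(\bdiv\b\tau,\tfrac12(\b\tau-\b\tau^{\tt t}))$ from $\HNdiv$ onto $\mathbf{M}$ admits a bounded right inverse; once this is available, the quotient estimate above closes the proof. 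The dimension count $d+\tfrac{d(d-1)}2<d^2$ for $d=2,3$ is what makes the simultaneous prescription of divergence and skew part feasible.
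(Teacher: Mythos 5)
Your proposal is correct and follows essentially the same route as the paper: the paper's proof consists precisely of the reduction to the (equivalent) surjectivity of $B_1$, deferred to \cite[Section 2.4.3]{gatica14}, and your two-step construction of a bounded right inverse --- a vector Poisson solve in $[\HG]^d$ to fix the divergence and the normal trace on $\Sigma$, plus a divergence-free, potential-based corrector prescribing the skew part --- is the standard argument behind that citation, correctly adapted to the mixed boundary condition. The single step you leave to the literature (the skew corrector, i.e.\ the classical weak-symmetry surjectivity of $\b\tau\mapsto\tfrac12(\b\tau-\b\tau^{\tt t})$ on divergence-free fields in $\HNdiv$) is exactly the content the paper itself cites rather than proves, so your treatment is, if anything, more explicit than the paper's.
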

\begin{proof}
	The (equivalent) surjectivity of  $B_1$  follows as a slight modification to the proof of \cite[Section 2.4.3]{gatica14}. 
\end{proof}
\begin{lemma}\label{inf_sup_B}
	There exists ${\beta}>0,$ such that 
	\begin{equation}\label{ineq_inf_sup_B}\quad \sup_{{\b r}\in \mathbb{L}^{2}(\Omega) \setminus\{\b 0\}} \frac{ [B({\b r}),\vec{\b \tau}]}{\norm{{\b r}}_{\mathbb{L}^2(\Omega)}}\geq \hat{\beta}\norm{\vec{\b \tau}}_{\mathbf{X}}\quad \forall\, \vec{\b \tau}\in  \mathbb{V}.
	\end{equation}
\end{lemma}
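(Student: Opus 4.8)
The plan is to verify \eqref{ineq_inf_sup_B} constructively: for each $\vec{\b \tau}=(\b \tau,\widetilde q)\in\mathbb{V}$ I will exhibit a single tensor $\b r\in\mathbb{L}^2(\Omega)$ satisfying $[B(\b r),\vec{\b \tau}]\gtrsim\norm{\vec{\b \tau}}_{\mathbf{X}}^2$ together with $\norm{\b r}_{\mathbb{L}^2(\Omega)}\lesssim\norm{\vec{\b \tau}}_{\mathbf{X}}$; taking the supremum over $\b r$ and dividing then yields the claim with $\hat\beta$ the ratio of the two hidden constants. The crucial preliminary reduction is that membership in the kernel $\mathbb{V}$ forces $\mathbf{div}\,\b \tau=\cero$, whence $\norm{\b \tau}_{\mathbb{H}(\mathbf{div},\Omega)}=\norm{\b \tau}_{\mathbb{L}^2(\Omega)}$; feeding this into \eqref{inequality_1} and \eqref{inequality_2} (noting that $\b \tau_0$ is also divergence free, so its $\mathbb{H}(\mathbf{div})$- and $\mathbb{L}^2$-norms coincide) controls the whole norm by the deviatoric part alone, namely $\norm{\b \tau}_{\mathbb{H}(\mathbf{div},\Omega)}^2\le (c_1c_2)^{-1}\norm{\b \tau^{\mathrm{d}}}_{\mathbb{L}^2(\Omega)}^2$.

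With this in hand, I would test against $\b r:=-\b \tau^{\mathrm{d}}-c\,\widetilde q\,\mathbb{I}$ for a constant $c>0$ fixed later. Since $\b \tau\colon\b \tau^{\mathrm{d}}=|\b \tau^{\mathrm{d}}|^2$ and $\tr(\b \tau^{\mathrm{d}})=0$, a short computation from the definition of $B$ gives
\[[B(\b r),\vec{\b \tau}]=\norm{\b \tau^{\mathrm{d}}}_{\mathbb{L}^2(\Omega)}^2+c\int_{\Omega}\widetilde q\,\tr(\b \tau)+c\,d\,\norm{\widetilde q}_{\mathrm{L}^2(\Omega)}^2.\]
The spherical choice supplies the coercive term $c\,d\,\norm{\widetilde q}^2$ and the deviatoric choice the coercive term $\norm{\b \tau^{\mathrm{d}}}^2$, which by the reduction above dominates $\norm{\b \tau}_{\mathbb{H}(\mathbf{div},\Omega)}^2$.

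The main obstacle is the indefinite cross term $c\int_{\Omega}\widetilde q\,\tr(\b \tau)$, which couples the two unknowns and carries no sign. I would absorb it through Young's inequality, $|c\int_\Omega\widetilde q\,\tr(\b \tau)|\le\tfrac{c}{2\delta}\norm{\widetilde q}^2+\tfrac{c\delta}{2}\norm{\tr\b \tau}^2$, then use $\norm{\tr\b \tau}^2\le d\norm{\b \tau}^2\le d(c_1c_2)^{-1}\norm{\b \tau^{\mathrm{d}}}^2$ from the reduction to express everything in terms of $\norm{\b \tau^{\mathrm{d}}}^2$ and $\norm{\widetilde q}^2$. Choosing for instance $\delta=1/d$ and $c=c_1c_2$ keeps both coefficients strictly positive and delivers $[B(\b r),\vec{\b \tau}]\ge\tfrac12\norm{\b \tau^{\mathrm{d}}}^2+\tfrac{c_1c_2 d}{2}\norm{\widetilde q}^2\gtrsim\norm{\vec{\b \tau}}_{\mathbf{X}}^2$. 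Finally, the $\mathbb{L}^2$-orthogonality $\b \tau^{\mathrm{d}}\colon\mathbb{I}=0$ gives $\norm{\b r}_{\mathbb{L}^2(\Omega)}^2=\norm{\b \tau^{\mathrm{d}}}^2+c^2 d\,\norm{\widetilde q}^2\lesssim\norm{\vec{\b \tau}}_{\mathbf{X}}^2$, so dividing the two bounds yields \eqref{ineq_inf_sup_B} with a constant $\hat\beta$ independent of $\lambda_s$. I expect the only genuinely delicate point to be that $\tr\b \tau$ is never controlled directly but only through the combined divergence-free and symmetry constraints defining $\mathbb{V}$ together with \eqref{inequality_1}--\eqref{inequality_2}; this is precisely why the estimate holds on the kernel rather than on all of $\mathbf{X}$.
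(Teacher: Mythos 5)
Your proof is correct, and it is essentially the argument the paper invokes by citation: combining \eqref{inequality_1} and \eqref{inequality_2} with the kernel constraints ($\mathbf{div}\,\b\tau=\cero$, so all the $\mathbb{H}(\mathbf{div})$-norms collapse to $\mathbb{L}^2$-norms and the deviatoric part controls the whole norm), then testing with a tensor built from $-\b\tau^{\mathrm{d}}$ and $-c\,\widetilde q\,\mathbb{I}$ and absorbing the cross term $\int_\Omega \widetilde q\,\tr(\b\tau)$ via Young's inequality, exactly as in the referenced \cite[Lemma 2.5]{gatica06}. Your constant bookkeeping ($\delta=1/d$, $c=c_1c_2$) checks out, so nothing further is needed.
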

\begin{proof} It follows by using \eqref{inequality_1} and \eqref{inequality_2}. For more details see \cite[Lemma 2.5]{gatica06}.
\end{proof}
Before analysing \eqref{VF-poro_1}-\eqref{VF-poro_4}, we study the  reduced problem: For each $\omega\in \mathrm{H}^1_\Gamma(\Omega)$, find $(\hat{\b t}, \hat{\vec{\b \sigma}},\hat{p})\in \mathbb{L}^2(\Omega)\times\mathbb{V}\times\mathrm{H}^1_{\Sigma}(\Omega)$ such that 
\begin{subequations}
	\begin{alignat}{11}
	&[A(\hat{\b t}), \b r]&+\quad&[B^*(\hat{\vec{\b \sigma}}),\b r]&&&&&=& \;\;[\widetilde{H}_\omega,\b r],&	\label{VF-poro_1_reduced}\\
	&[B(\hat{\b t}), \vec{\b \tau}]\;\;\,&-\quad& [C(\hat{\vec{\b \sigma}}),\vec{\b \tau}]\;\;\,&&\;&+&\;\; [B_2^*(\hat{p}),\vec{\b \tau}]\;&=&\;\;[\widetilde{F}_1,\vec{\b \tau}],&\label{VF-poro_2_reduced}\\
	&&&[B_2(\hat{\vec{\b \sigma}}),q]&&&-&\;\;[D(\hat{p}),q]&=& \;\;[\widetilde{G},q],& \label{VF-poro_3_reduced}
	\end{alignat}\end{subequations}
for all $(\b r,\vec{\b \tau},q)\in \mathbb{L}^2(\Omega)\times\mathbb{V}\times \mathrm{H}^1_{\Sigma}(\Omega)$, where $\hat{\vec{\b \sigma}}:=(\hat{\b \sigma}, \hat{\widetilde{p}})$, and $\widetilde{H}_\omega, \widetilde{F}_1$ and $\widetilde{G}$ are the generic functionals mentioned at the beginning of Section \ref{well-posedness-analysis}. 
The unique solvability  of \eqref{VF-poro_1_reduced}-\eqref{VF-poro_3_reduced} proceeds using Fredholm's alternative. Let us recast   \eqref{VF-poro_1_reduced}-\eqref{VF-poro_3_reduced} with $\omega$ given, as the following equivalent operator problem: find $\vec{\b t}:=(\hat{\b t}, \hat{\vec{\b \sigma}}, \hat{p})\in \mathbf{H}:=\mathbb{L}^2(\Omega)\times\mathbb{V}\times\mathrm{H}^1_{\Sigma}(\Omega)$ such that 
\begin{equation}\label{operator_problem}
(\mathcal{S}+\mathcal{T})\vec{\b t}=\mathcal{F},
\end{equation}
where the linear operators $\mathcal{S},\mathcal{T}:\mathbf{H}\to \mathbf{H}^*$, and $\mathcal{F}\in \mathbf{H}^*$ are defined, for all $\vec{\b r}:=(\b r, \vec{\b \tau}, q)\in \mathbf{H}$,  as
\begin{gather*}
[\mathcal{S}(\vec{\b t}),\vec{\b r}]:=[A(\hat{\b t}), \b r]+[B^*(\hat{\vec{\b \sigma}}),\b r ]-[B(\hat{\b t}), \vec{\b \tau}]+[C( \hat{\vec{\b \sigma}}),\vec{\b \tau}]+[D(\hat{p}),q],\\
[\mathcal{T}(\vec{\b t}),\vec{\b r}]:=-[B_2^*(\hat{p}),\vec{\b \tau}]-[B_2(\hat{\vec{\b \sigma}}),q],\qquad 
[\mathcal{F},\vec{\b r}]:=[\widetilde{H}_\omega,r]-[\widetilde{F}_1,\vec{\b \tau}]-[\widetilde{G},q].\end{gather*}

The three upcoming lemmas establish that $\mathcal{S}$ is invertible,   $\mathcal{T}$ is compact, and  $\mathcal{S}+\mathcal{T}$ is injective.  Fredholm's theory will yield the well-posedness of  \eqref{operator_problem} and, equivalently, that of \eqref{VF-poro_1_reduced}-\eqref{VF-poro_3_reduced}, with $\omega\in \mathrm{H}_\Gamma^1(\Omega)$ given.
\begin{lemma}\label{F-invert}
	The operator $\mathcal{S}:\mathbf{H}\to \mathbf{H}^*$ is invertible.
\end{lemma}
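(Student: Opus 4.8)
The plan is to exploit the block structure of $\mathcal{S}$. Reading off the definition of $[\mathcal{S}(\vec{\b t}),\vec{\b r}]$, the pressure components $\hat p$ and $q$ occur \emph{only} through the term $[D(\hat p),q]$; they are completely decoupled from the pair $(\hat{\b t},\hat{\vec{\b \sigma}})$. Consequently $\mathcal{S}$ is block-diagonal with respect to the splitting $\mathbf{H}=\bigl(\mathbb{L}^2(\Omega)\times\mathbb{V}\bigr)\times \mathrm{H}^1_{\Sigma}(\Omega)$, and its invertibility is equivalent to the simultaneous invertibility of the pressure block $D:\mathrm{H}^1_{\Sigma}(\Omega)\to\mathrm{H}^1_{\Sigma}(\Omega)'$ and of the $(\b t,\vec{\b \sigma})$-block $\mathcal{S}_0$ carrying $A$, $B$, $B^*$ and $C$. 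I would treat the two blocks separately.

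For the pressure block, I would simply invoke the Lax--Milgram lemma. Using the lower bound in \eqref{prop-D}, $[D(q),q]\ge \bigl(c_0+\tfrac{\alpha^2}{\lambda_s}\bigr)\Vert q\Vert^2_{0,\Omega}+\tfrac{\kappa_1}{\mu_f}\Vert\nabla q\Vert^2_{0,\Omega}$, and since functions in $\mathrm{H}^1_{\Sigma}(\Omega)$ vanish on $\Sigma$ (with $|\Sigma|>0$), the Poincar\'e--Friedrichs inequality upgrades the gradient control to full $\mathrm{H}^1$-coercivity. This already shows $D$ is an isomorphism, and note that the coercivity constant depends on $\kappa_1,\mu_f$ and the Poincar\'e constant but not on $\lambda_s$ (the $\alpha^2/\lambda_s$ term only helps), which is consistent with the claimed robustness.

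For the saddle block $\mathcal{S}_0$ I would argue by Schur complement, which is clean here because $A$ is coercive on the \emph{entire} space $\mathbb{L}^2(\Omega)$, not merely on a kernel: indeed $[A(\b r),\b r]=2\mu_s\Vert\b r\Vert_{0,\Omega}^2$, so $A$ is invertible by Lax--Milgram and I may eliminate $\hat{\b t}=A^{-1}(\widetilde H_\omega-B^*\hat{\vec{\b \sigma}})$. Substituting into the second equation leaves the reduced problem $(B A^{-1}B^{*}+C)\hat{\vec{\b \sigma}}=B A^{-1}\widetilde H_\omega-\widetilde F_1$ on $\mathbb{V}$. The Schur operator is coercive: testing against $\vec{\b \sigma}$ gives $[BA^{-1}B^{*}\vec{\b \sigma},\vec{\b \sigma}]=\tfrac{1}{2\mu_s}\Vert B^{*}\vec{\b \sigma}\Vert^2_{\mathbb{L}^2(\Omega)'}$, and the inf-sup estimate of Lemma~\ref{inf_sup_B} states precisely that $\Vert B^{*}\vec{\b \sigma}\Vert_{\mathbb{L}^2(\Omega)'}=\sup_{\b r}[B(\b r),\vec{\b \sigma}]/\Vert\b r\Vert_{0,\Omega}\ge\hat\beta\Vert\vec{\b \sigma}\Vert_{\mathbf{X}}$ for $\vec{\b \sigma}\in\mathbb{V}$; since in addition $[C(\vec{\b \sigma}),\vec{\b \sigma}]=\tfrac{1}{\lambda_s}\Vert\widetilde p\Vert^2_{0,\Omega}\ge 0$, the Schur operator is bounded below by $\tfrac{\hat\beta^2}{2\mu_s}\Vert\cdot\Vert^2_{\mathbf{X}}$ on $\mathbb{V}$. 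Lax--Milgram then yields a unique $\hat{\vec{\b \sigma}}$, and back-substitution recovers $\hat{\b t}$; hence $\mathcal{S}_0$, and therefore $\mathcal{S}$, is invertible. (Equivalently, this verifies the hypotheses of the Brezzi theory for perturbed saddle-point problems, with $a$ coercive, $c$ positive semidefinite, and $b$ inf-sup stable.)

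The one step that carries the real content is the coercivity of the Schur complement in the \emph{stress} direction. Neither $A$ (which only controls $\b t$) nor $C$ (which only controls $\widetilde p$ through $\lambda_s^{-1}$) supplies any control of the full tensor $\b \sigma$; all of that control must come from the inf-sup of $B$ on $\mathbb{V}$, i.e.\ from Lemma~\ref{inf_sup_B} together with the underlying estimates \eqref{inequality_1}--\eqref{inequality_2}. I would therefore present Lemma~\ref{inf_sup_B} as the crux and keep the Lax--Milgram verifications routine. A final point worth stating explicitly is robustness with respect to $\lambda_s$: the coercive lower bound $\hat\beta^2/(2\mu_s)$ of the Schur operator is independent of $\lambda_s$, and the semidefinite term $C=\lambda_s^{-1}(\cdot)$ can only reinforce it, so the invertibility of $\mathcal{S}$ is stable as $\lambda_s\to\infty$.
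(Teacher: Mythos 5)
Your proof is correct, and it follows the same skeleton as the paper's: both split $\mathcal{S}$ into the decoupled pressure block, handled by Lax--Milgram (the paper treats \eqref{VF_Fredholm_p} exactly this way), and the $(\hat{\b t},\hat{\vec{\b \sigma}})$ saddle block \eqref{VF_fredholm_1}--\eqref{VF_fredholm_2}, whose solvability in both arguments rests on the same three ingredients: the ellipticity of $A$ on all of $\mathbb{L}^2(\Omega)$ (cf.\ \eqref{ellipticity_TSP}), the positive semidefiniteness of $C$ on $\mathbb{V}$ (cf.\ \eqref{semi_definite_C}), and the inf-sup condition of Lemma~\ref{inf_sup_B}. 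Where you genuinely diverge is in how the saddle block is closed: the paper verifies these three hypotheses and then cites the abstract perturbed saddle-point result (the linear version of \cite[Lemma 2.1]{gatica03}), whereas you prove that result from scratch by a Schur-complement elimination --- legitimate precisely because $A$ is coercive on the \emph{whole} space rather than merely on a kernel, so $\hat{\b t}=A^{-1}(\widetilde H_\omega-B^*\hat{\vec{\b \sigma}})$ is well defined --- followed by Lax--Milgram for the operator $BA^{-1}B^*+C$, whose coercivity constant $\hat\beta^2/(2\mu_s)$ you compute explicitly from Lemma~\ref{inf_sup_B}. What each route buys: the paper's citation is shorter and fits the pattern reused later for the discrete analysis (where the same abstract theory is invoked via \cite{gatica03}); your argument is self-contained, more elementary, and makes the $\lambda_s$-robustness transparent, since the coercivity bound of the Schur operator is independent of $\lambda_s$ and the term $C$ can only help. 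You also correctly identify Lemma~\ref{inf_sup_B} as the crux --- the only place where control of the stress variable enters --- which matches the logical structure of the paper's proof even though the packaging differs.
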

Asserting the invertibility of $\mathcal{S}$ is equivalent to proving the unique solvability of the following uncoupled problems: find $(\hat{\b t}, \hat{\vec{\b \sigma}})\in \mathbb{L}^2(\Omega)\times \mathbb{V}$ such that 
\begin{subequations}
\begin{alignat}{11}
&[A(\hat{\b t}), \b r]&+\quad&[B^*(\hat{\vec{\b \sigma}}), \b r]&&&&&=& \;\;[F_{\omega},\b r]&\quad  \forall\, \b r\in \mathbb{L}^2(\Omega),\label{VF_fredholm_1}\\
&[B(\hat{\b t}), \vec{\b \tau}]\;\;\,&-\quad& [C(\hat{\vec{\b \sigma}}),\vec{\b \tau}]\;\;\,&&\; &&&=&\;\;[F_{\mathbf{X}},\vec{\b \tau}]& \forall\, \vec{\b \tau} \in \mathbb{V},\label{VF_fredholm_2}
\end{alignat}\end{subequations}
and: find $\hat{p}\in \mathrm{H}^1_\Sigma(\Omega)$ such that \begin{equation}\label{VF_Fredholm_p}
[D(\hat{p}),q]= [F_{H},q]\quad \forall\,q\in \mathrm{H}^1_{\Sigma}(\Omega),
\end{equation}
where, defining $\mathcal{F}:=(\mathcal{F}_\omega,\mathcal{F}_{\mathbf{X}},\mathcal{F}_H)$, the functionals $F_\omega, F_{\mathbf{X}}$, and $F_H$ are the ones induced by the operators $\mathcal{F}_\omega,\mathcal{F}_{\mathbf{X}},\mathcal{F}_H$, respectively.
The unique solution of   \eqref{VF_Fredholm_p} simply follows by Lax-Milgram's lemma. In turn, for   \eqref{VF_fredholm_1}-\eqref{VF_fredholm_2}, we verify the conditions given by the linear version of \cite[Lemma 2.1]{gatica03}. We first note that
\begin{equation}
[A(\b r),\b r]=2\mu_s \Vert\b r\Vert^2_{\mathbb{L}^2(\Omega)}\quad \forall\, \b r\in \mathbb{L}^2(\Omega), \label{ellipticity_TSP}
\end{equation} 
which means that the operator $A$ is elliptic on $\mathbb{L}^2(\Omega)$. Moreover, we observe that
\begin{equation}
[C(\vec{\b \tau}),\vec{\b \tau}]=\frac{1}{\lambda_s}\Vert \widetilde{q}\Vert^2_{\mathrm{L}^2(\Omega)}\geq 0\quad \forall\,\vec{\b \tau}\in \mathbb{V},\label{semi_definite_C}
\end{equation} which shows that the operator $C$ is semi positive definite on $\mathbb{V}$. In this way, having in mind \eqref{ellipticity_TSP}, \eqref{semi_definite_C}, and the inf-sup condition for $B$ given by Lemma \ref{inf_sup_B}, we simply apply \cite[Lemma 2.1]{gatica03}, and obtain the desired result. 
\begin{lemma}\label{F-compact}
	The operator $\mathcal{T}:\mathbf{H}\to \mathbf{H}^*$ is compact.
\end{lemma}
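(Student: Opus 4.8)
The plan is to exploit the fact that the coupling in $\mathcal{T}$ acts between the total-pressure component $\widetilde q$, which lives only in $\mathrm{L}^2(\Omega)$, and the fluid pressure $\hat p$, which lives in $\mathrm{H}^1_{\Sigma}(\Omega)$, solely through the $\mathrm{L}^2(\Omega)$-pairing $\int_{\Omega}\widetilde q\,\hat p$; since this pairing factors through the compact Rellich embedding $i:\mathrm{H}^1_{\Sigma}(\Omega)\hookrightarrow\mathrm{L}^2(\Omega)$ (available because $\Omega$ is bounded and Lipschitz), compactness follows. Concretely, writing $\vec{\b t}=(\hat{\b t},\hat{\vec{\b \sigma}},\hat p)$ with $\hat{\vec{\b \sigma}}=(\hat{\b \sigma},\hat{\widetilde p})$, and unfolding the definitions of $B_2$ and $B_2^*$ from \eqref{def-C-B2-D}, one has
\[
[\mathcal{T}(\vec{\b t}),\vec{\b r}]=-\frac{\alpha}{\lambda_s}\int_{\Omega}\widetilde q\,\hat p-\frac{\alpha}{\lambda_s}\int_{\Omega}\hat{\widetilde p}\,q .
\]
I would therefore split $\mathcal{T}=\mathcal{T}_1+\mathcal{T}_2$ into the two summands and establish compactness of each; as the sum of compact operators is compact, this suffices.

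For the first summand $\mathcal{T}_1$, induced by $-B_2^*(\hat p)\in\mathbf{X}'$, the relevant input $\hat p$ is precisely the variable carrying the extra regularity. I would factor $B_2^*=\widetilde B_2^*\circ i$, where $\widetilde B_2^*:\mathrm{L}^2(\Omega)\to\mathbf{X}'$ is the bounded operator given by the same formula $[\widetilde B_2^*(v),\vec{\b \tau}]=\frac{\alpha}{\lambda_s}\int_{\Omega}\widetilde q\,v$ but with argument taken in $\mathrm{L}^2(\Omega)$. Since $i$ is compact and $\widetilde B_2^*$ is bounded, the composition is compact; precomposing with the bounded projection $\vec{\b t}\mapsto\hat p$ and postcomposing with the bounded inclusion $\mathbf{X}'\hookrightarrow\mathbf{H}^*$ then shows that $\mathcal{T}_1$ is compact.

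The main obstacle is the second summand $\mathcal{T}_2$, induced by $-B_2(\hat{\vec{\b \sigma}})\in\mathrm{H}^1_{\Sigma}(\Omega)'$: here the relevant input $\hat{\widetilde p}$ enjoys no extra regularity (it lies only in $\mathrm{L}^2(\Omega)$), yet the output must sit compactly in $\mathrm{H}^1_{\Sigma}(\Omega)'$. The resolution is duality: the map $j:\mathrm{L}^2(\Omega)\to\mathrm{H}^1_{\Sigma}(\Omega)'$, $[j(v),q]=\frac{\alpha}{\lambda_s}\int_{\Omega}v\,q$, coincides (up to the Riesz identification in $\mathrm{L}^2(\Omega)$) with the adjoint of the compact embedding $i$, so by Schauder's theorem $j$ is itself compact; composing with the bounded projection $\hat{\vec{\b \sigma}}\mapsto\hat{\widetilde p}$ yields compactness of $\mathcal{T}_2$. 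Should a self-contained argument be preferred, I would instead take a bounded sequence $\{\vec{\b t}_n\}\subset\mathbf{H}$, extract $\hat{\widetilde p}_n\rightharpoonup\hat{\widetilde p}$ weakly in $\mathrm{L}^2(\Omega)$, and use that a bounded, weakly null sequence converges uniformly when tested against the relatively compact image in $\mathrm{L}^2(\Omega)$ of the unit ball $\{q\in\mathrm{H}^1_{\Sigma}(\Omega):\|q\|_{\mathrm{H}^1_{\Sigma}(\Omega)}\le 1\}$; this is exactly the statement that $\|\mathcal{T}_2(\vec{\b t}_n)-\mathcal{T}_2(\vec{\b t})\|_{\mathrm{H}^1_{\Sigma}(\Omega)'}\to 0$. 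Either route delivers the compactness of $\mathcal{T}=\mathcal{T}_1+\mathcal{T}_2$.
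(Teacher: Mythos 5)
Your proposal is correct and takes essentially the same route as the paper: both split $\mathcal{T}$ into the two $B_2$/$B_2^*$ coupling terms and derive compactness from the Rellich embedding $\mathrm{H}^1_{\Sigma}(\Omega)\hookrightarrow\mathrm{L}^2(\Omega)$ together with Schauder's theorem (compactness of the adjoint) for the piece whose input lies only in $\mathrm{L}^2(\Omega)$. The only difference is presentational: the paper packages both terms through a single operator $\mathbb{B}$ and cites \cite[Lemma 2.2]{oyarzua16} for the key compactness, whereas you prove that same fact in a self-contained way.
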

\begin{proof}
	Let us to define the operator $\mathbb{B}:\mathrm{L}^2(\Omega)\to \mathrm{H}^1_{\Sigma}(\Omega)$ as
	\[\langle \mathbb{B}(\widetilde{q}),q\rangle_{\mathrm{L}^2(\Omega)}:=\frac{\alpha}{\lambda_s}\int_{\Omega}\widetilde{q}q\quad \forall\,\widetilde{q}\in \mathrm{L}^2(\Omega),\;\forall\,q\in  \mathrm{H}^1_{\Sigma}(\Omega),\] 
	where $\langle\cdot,\cdot\rangle_{\mathrm{L}^2(\Omega)}$ stands for the ${\mathrm{L}^2(\Omega)}$-inner product. Then, thanks to the compactness of the adjoint operator $\mathbb{B}^*$ (see, \cite[Lemma 2.2]{oyarzua16}), we deduce that the following operator is also compact
	\[\mathcal{T}(\vec{\b t})=(\mathbf{0},(\b 0,\mathbb{B}(\hat{\widetilde{p}})),\mathbb{B}^*(\hat{p})).\]
\end{proof}
\begin{lemma}\label{F-injective}
	The operator $(\mathcal{S}+\mathcal{T}):\mathbf{H}\to \mathbf{H}^*$ is injective. 
\end{lemma}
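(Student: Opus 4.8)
The plan is to establish injectivity by a direct energy argument: I take the homogeneous problem and test it against its own solution, exploiting the adjoint structure of the off-diagonal blocks $B,B^{*}$ and $B_2,B_2^{*}$. Suppose $(\mathcal{S}+\mathcal{T})\vec{\b t}=\mathrm{O}$ with $\vec{\b t}=(\hat{\b t},\hat{\vec{\b\sigma}},\hat p)\in\mathbf{H}$ and $\hat{\vec{\b\sigma}}=(\hat{\b\sigma},\hat{\widetilde p})\in\mathbb{V}$; equivalently, this triple solves \eqref{VF-poro_1_reduced}--\eqref{VF-poro_3_reduced} with vanishing right-hand sides. Since $\hat{\vec{\b\sigma}}\in\mathbb{V}$ and $\hat p\in\mathrm{H}^1_\Sigma(\Omega)$ are admissible test functions, I evaluate the pairing $[(\mathcal{S}+\mathcal{T})\vec{\b t},\vec{\b t}]=0$.

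The first key step is that, because $B^{*}$ is the adjoint of $B$, the cross terms $[B^{*}(\hat{\vec{\b\sigma}}),\hat{\b t}]$ and $-[B(\hat{\b t}),\hat{\vec{\b\sigma}}]$ coming from the first two rows of $\mathcal{S}$ cancel exactly, leaving only the diagonal contribution
\begin{equation*}
[A(\hat{\b t}),\hat{\b t}]+[C(\hat{\vec{\b\sigma}}),\hat{\vec{\b\sigma}}]+[D(\hat p),\hat p]-[B_2^{*}(\hat p),\hat{\vec{\b\sigma}}]-[B_2(\hat{\vec{\b\sigma}}),\hat p]=0.
\end{equation*}
Using the explicit forms in \eqref{def-A-B}--\eqref{def-C-B2-D} together with \eqref{ellipticity_TSP} and \eqref{semi_definite_C}, this reads
\begin{equation*}
2\mu_s\norm{\hat{\b t}}_{\mathbb{L}^2(\Omega)}^2+\frac{1}{\lambda_s}\norm{\hat{\widetilde p}}_{\mathrm{L}^2(\Omega)}^2+\Bigl(c_0+\frac{\alpha^2}{\lambda_s}\Bigr)\norm{\hat p}_{\mathrm{L}^2(\Omega)}^2+\frac{1}{\mu_f}\int_\Omega\boldsymbol{\kappa}\nabla\hat p\cdot\nabla\hat p-\frac{2\alpha}{\lambda_s}\int_\Omega\hat{\widetilde p}\,\hat p=0.
\end{equation*}

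The second key step is to complete the square in the $\lambda_s$-dependent terms, using $\frac{1}{\lambda_s}\norm{\hat{\widetilde p}}^2-\frac{2\alpha}{\lambda_s}\int_\Omega\hat{\widetilde p}\,\hat p+\frac{\alpha^2}{\lambda_s}\norm{\hat p}^2=\frac{1}{\lambda_s}\norm{\hat{\widetilde p}-\alpha\hat p}^2$, to obtain
\begin{equation*}
2\mu_s\norm{\hat{\b t}}_{\mathbb{L}^2(\Omega)}^2+\frac{1}{\lambda_s}\norm{\hat{\widetilde p}-\alpha\hat p}_{\mathrm{L}^2(\Omega)}^2+c_0\norm{\hat p}_{\mathrm{L}^2(\Omega)}^2+\frac{1}{\mu_f}\int_\Omega\boldsymbol{\kappa}\nabla\hat p\cdot\nabla\hat p=0.
\end{equation*}
Every term is nonnegative, so each must vanish. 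The first term gives $\hat{\b t}=\cero$; the last term and the positive-definiteness of $\boldsymbol{\kappa}$ from \eqref{prop-D} give $\nabla\hat p=\cero$, and since $\hat p\in\mathrm{H}^1_\Sigma(\Omega)$ vanishes on $\Sigma$, the Poincar\'e--Friedrichs inequality forces $\hat p=0$; the second term then yields $\hat{\widetilde p}=\alpha\hat p=0$. Finally, returning to \eqref{VF-poro_1_reduced} with $\hat{\b t}=\cero$ and $\hat{\widetilde p}=0$ leaves $-\int_\Omega\hat{\b\sigma}\colon\b r=0$ for all $\b r\in\mathbb{L}^2(\Omega)$, whence $\hat{\b\sigma}=\cero$. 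Thus $\vec{\b t}=\cero$ and $\mathcal{S}+\mathcal{T}$ is injective.

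I do not anticipate a genuine obstruction. The only delicate point is recognising that the potentially destabilising coupling between $\hat{\widetilde p}$ and $\hat p$ (the compact, $\lambda_s$-scaled $B_2/B_2^{*}$ terms moved into $\mathcal{T}$) is exactly absorbed by completing the square, so that the residual form stays coercive \emph{uniformly} in $\lambda_s$. Care must also be taken to justify that $\hat{\vec{\b\sigma}}\in\mathbb{V}$ is itself an admissible test function in \eqref{VF-poro_2_reduced}, and to conclude $\hat p=0$ from the boundary condition encoded in $\mathrm{H}^1_\Sigma(\Omega)$ rather than relying on $c_0>0$.
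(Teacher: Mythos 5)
Your proof is correct, and it follows the same overall strategy as the paper's: take the homogeneous problem, test it against its own solution, and use the resulting energy identity. The differences are in the execution, and they are worth recording. First, where the paper bounds the two cross terms by $\frac{\alpha}{\lambda_s}\Vert\hat{\widetilde p}\Vert_{\mathrm{L}^2(\Omega)}\Vert\hat p\Vert_{\mathrm{H}^1(\Omega)}$ and then invokes Young's inequality (cf. \eqref{bound_t_injective}--\eqref{bound_p_injective}), you keep them exactly and complete the square, arriving at the term $\frac{1}{\lambda_s}\Vert\hat{\widetilde p}-\alpha\hat p\Vert^2_{\mathrm{L}^2(\Omega)}$; this is sharper (the paper's cancellation under Young only works cleanly if the cross term is bounded with the $\mathrm{L}^2$-norm of $\hat p$ rather than the $\mathrm{H}^1$-norm as written), it immediately yields $\hat{\widetilde p}=\alpha\hat p$, and it is precisely the quadratic form $S$ that the paper itself introduces later in Step 2 of Lemma~\ref{stability-lemma}. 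Second, for the stress you bypass Lemma~\ref{inf_sup_B} entirely: once $\hat{\b t}=\cero$ and $\hat{\widetilde p}=0$, equation \eqref{VF-poro_1_reduced} gives $\int_\Omega\hat{\b\sigma}\colon\b r=0$ for all $\b r\in\mathbb{L}^2(\Omega)$, hence $\hat{\b\sigma}=\cero$ in $\mathbb{L}^2(\Omega)$ and therefore in $\mathbb{H}_\Sigma(\mathbf{div},\Omega)$, since its distributional divergence then vanishes as well; the paper instead deduces $\Vert\hat{\vec{\b\sigma}}\Vert_{\mathbf{X}}=0$ from the inf-sup condition for $B$ on $\mathbb{V}$, which does not require knowing $\hat{\widetilde p}=0$ beforehand but does lean on that key lemma. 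Third, your use of the Poincar\'e--Friedrichs inequality on $\mathrm{H}^1_\Sigma(\Omega)$ makes the conclusion $\hat p=0$ independent of $c_0>0$, whereas the paper's constant $\min\{c_0,\kappa_1/\mu_f\}$ degenerates as $c_0\to 0$; given that the numerical section advertises robustness for zero storage coefficient, your variant is the more robust one.
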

\begin{proof}
	It is sufficient to show that the unique solution of the homogeneous problem of \eqref{VF-poro_1_reduced}-\eqref{VF-poro_3_reduced}
	is the null-operator in the space $\mathbf{H}.$ Thus, we begin by considering $\widetilde{H}_\omega=\widetilde{F}_1=\widetilde{G}=0$ in \eqref{VF-poro_1_reduced}-\eqref{VF-poro_3_reduced}, and then, taking $\b r=\hat{\b t}$, and $\vec{\b \tau}=\hat{\vec{\b \sigma}}$, in \eqref{VF-poro_1_reduced}, and \eqref{VF-poro_2_reduced} respectively, and applying some computations, we obtain
	\begin{equation}\label{bound_t_injective}
		2\mu_s\Vert \hat{\b t}\Vert^2_{\mathbb{L}^2(\Omega)}+\frac{1}{\lambda_s}\Vert \hat{\widetilde{p}}\Vert^2_{\mathrm{L}^2(\Omega)}-\frac{\alpha}{\lambda_s}\Vert\hat{\widetilde{p}}\Vert_{\mathrm{L}^2(\Omega)}\Vert \hat{p}\Vert_{\mathrm{H}^1(\Omega)}\leq 0.
	\end{equation}
	In turn, choosing $q=\hat{p}$ in \eqref{VF-poro_3_reduced}, we have
	\begin{equation}\label{bound_p_injective}
		\mathrm{min}\left\{c_0,\frac{\kappa_1}{\mu_f}\right\}\Vert \hat{p}\Vert^2_{\mathrm{H}^1(\Omega)}+\frac{\alpha^2}{\lambda_s}\Vert \hat{p}\Vert^2_{\mathrm{L}^2(\Omega)}-\frac{\alpha}{\lambda_s}\Vert\hat{\widetilde{p}}\Vert_{\mathrm{L}^2(\Omega)}\Vert \hat{p}\Vert_{\mathrm{H}^1(\Omega)}\leq 0.\end{equation}
	Thus, by adding \eqref{bound_t_injective} and \eqref{bound_p_injective}, and applying Young's inequality, we readily obtain $\Vert\hat{\b t}\Vert_{\mathbb{L}^2(\Omega)}=\Vert \hat{p}\Vert_{\mathrm{H}^1(\Omega)}=0$.
	On the other hand, from Lemma \ref{inf_sup_B}, and using \eqref{VF-poro_1_reduced}, we get 
	\begin{equation*}
		\hat{\beta}\Vert \hat{\vec{\b \sigma}}\Vert_{\mathbf{X}}\leq \sup_{\mbox{\scriptsize $\b r\in \mathbb{L}^2(\Omega) \atop \b r\neq 0$}}\frac{[B(\b r),\hat{\vec{\b \sigma}}]}{\Vert{\b r\Vert_{\mathbb{L}^2(\Omega)}}}=\sup_{\mbox{\scriptsize $\b r\in \mathbb{L}^2(\Omega) \atop \b r\neq 0$}}\frac{-[A(\hat{\b t}), \b r]}{\Vert{\b r\Vert_{\mathbb{L}^2(\Omega)}}}\leq 2\mu_s\Vert\hat{\b t}\Vert_{\mathbb{L}^2(\Omega)},
	\end{equation*}
	from which, we deduce that $\Vert \hat{\vec{\b \sigma}}\Vert_{\mathbf{X}}=0$, concluding the proof. 
	\end{proof}
As announced above, thanks to Lemmas \ref{F-invert}, \ref{F-compact} and \ref{F-injective}, and the Fredholm's alternative, we deduce the existence of a unique solution to \eqref{VF-poro_1_reduced}-\eqref{VF-poro_3_reduced}, again, with a given $\omega\in \mathrm{H}^1_{\Gamma}(\Omega)$.  
We complement the above result with the stability of \eqref{VF-poro_1_reduced}-\eqref{VF-poro_3_reduced}, for a given $\omega\in \mathrm{H}^1_{\Gamma}(\Omega)$.
\begin{lemma}\label{stability-lemma}
	For each $\omega\in \mathrm{H}^1_{\Gamma}(\Omega)$, there exists a constant $\hat{C}>0$ independent of $\lambda_s$, such that
	\begin{equation*}
	\Vert \hat{\b t}\Vert_{\mathbb{L}^2(\Omega)}+\Vert \hat{\vec{\b \sigma}}\Vert_{\mathbf{X}}+\Vert \hat{p}\Vert_{\mathrm{H}^1(\Omega)}\leq \hat{C}\left(\Vert \widetilde{H}_\omega\Vert_{\mathbb{L}^2(\Omega)'}+\Vert \widetilde{F}_1\Vert_{\mathbf{X}'}+\Vert \widetilde{G}\Vert_{\mathrm{H}^1_{\Sigma}(\Omega)'}\right).
	\end{equation*}
\end{lemma}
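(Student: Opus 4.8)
The plan is to derive the bound by an energy argument that tests each of the three reduced equations with the corresponding component of the solution, and then to recover the stress norm separately through the inf-sup condition of Lemma~\ref{inf_sup_B}. First I would take $\b r=\hat{\b t}$ in \eqref{VF-poro_1_reduced}, $\vec{\b \tau}=\hat{\vec{\b \sigma}}$ in \eqref{VF-poro_2_reduced}, and $q=\hat p$ in \eqref{VF-poro_3_reduced}, using the adjointness relations $[B^*(\hat{\vec{\b \sigma}}),\hat{\b t}]=[B(\hat{\b t}),\hat{\vec{\b \sigma}}]$ and $[B_2^*(\hat p),\hat{\vec{\b \sigma}}]=[B_2(\hat{\vec{\b \sigma}}),\hat p]$. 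Subtracting the second and third tested identities from the first cancels the two occurrences of $[B(\hat{\b t}),\hat{\vec{\b \sigma}}]$ and leaves only the diagonal contributions together with the coupling term $-2[B_2^*(\hat p),\hat{\vec{\b \sigma}}]$.

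The decisive step is to observe that, after inserting the explicit form \eqref{semi_definite_C} of $C$, the definition of $D$, and $[B_2^*(\hat p),\hat{\vec{\b \sigma}}]=\frac{\alpha}{\lambda_s}\int_\Omega \hat{\widetilde p}\,\hat p$, all terms carrying the factor $1/\lambda_s$ assemble into a perfect square, so that the combined identity reads
\[
2\mu_s\Vert\hat{\b t}\Vert^2_{\mathbb{L}^2(\Omega)}+c_0\Vert\hat p\Vert^2_{\mathrm{L}^2(\Omega)}+\frac{1}{\mu_f}\int_\Omega\b\kappa\nabla\hat p\cdot\nabla\hat p+\frac{1}{\lambda_s}\Vert\hat{\widetilde p}-\alpha\hat p\Vert^2_{\mathrm{L}^2(\Omega)}=[\widetilde H_\omega,\hat{\b t}]-[\widetilde F_1,\hat{\vec{\b \sigma}}]-[\widetilde G,\hat p].
\]
Since the last term on the left is non-negative, it can simply be discarded; using the lower bound $\b\kappa\nabla\hat p\cdot\nabla\hat p\geq\kappa_1|\nabla\hat p|^2$ from \eqref{prop-D} together with $\min\{c_0,\kappa_1/\mu_f\}$ as in \eqref{bound_p_injective}, the left-hand side controls $2\mu_s\Vert\hat{\b t}\Vert^2_{\mathbb{L}^2(\Omega)}+\min\{c_0,\kappa_1/\mu_f\}\Vert\hat p\Vert^2_{\mathrm{H}^1(\Omega)}$ with constants that do not involve $\lambda_s$.

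It remains to control $\hat{\vec{\b \sigma}}$, which the coercivity does not see directly. Here I would exploit that $\hat{\vec{\b \sigma}}\in\mathbb{V}$, so the inf-sup estimate \eqref{ineq_inf_sup_B} applies; rewriting $[B(\b r),\hat{\vec{\b \sigma}}]=[\widetilde H_\omega,\b r]-[A(\hat{\b t}),\b r]$ from \eqref{VF-poro_1_reduced} and bounding with \eqref{ellipticity_TSP} gives $\hat\beta\Vert\hat{\vec{\b \sigma}}\Vert_{\mathbf X}\leq\Vert\widetilde H_\omega\Vert_{\mathbb{L}^2(\Omega)'}+2\mu_s\Vert\hat{\b t}\Vert_{\mathbb{L}^2(\Omega)}$. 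Substituting this bound into the right-hand side of the energy identity, estimating each duality pairing by Cauchy--Schwarz, and absorbing the resulting $\Vert\hat{\b t}\Vert_{\mathbb{L}^2(\Omega)}$ and $\Vert\hat p\Vert_{\mathrm{H}^1(\Omega)}$ terms into the left via Young's inequality yields control of $\Vert\hat{\b t}\Vert_{\mathbb{L}^2(\Omega)}$ and $\Vert\hat p\Vert_{\mathrm{H}^1(\Omega)}$; feeding these back into the inf-sup bound closes the estimate for $\Vert\hat{\vec{\b \sigma}}\Vert_{\mathbf X}$. Since the only constants entering are $\mu_s$, $c_0$, $\kappa_1/\mu_f$ and $\hat\beta$, all independent of $\lambda_s$, the constant $\hat C$ is as claimed.

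I expect the perfect-square cancellation to be the only genuinely delicate point: it is exactly what prevents the $1/\lambda_s$ contributions from either blowing up or forcing a $\lambda_s$-dependent constant, and it relies on the precise algebraic compatibility between the weights of $C$, $D$ and $B_2$ (namely the matching $1/\lambda_s$, $\alpha^2/\lambda_s$ and $\alpha/\lambda_s$ coefficients). A secondary technical point is organisational rather than conceptual: because $\hat{\vec{\b \sigma}}$ is invisible to the coercivity estimate, it must be recovered in a separate stage from the inf-sup condition, which is why the argument is split into the two steps above instead of relying on a single global ellipticity bound.
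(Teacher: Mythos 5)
Your proof is correct, and it takes a genuinely different --- and more direct --- route than the paper's. Your energy identity is exact: testing \eqref{VF-poro_1_reduced}--\eqref{VF-poro_3_reduced} with $(\hat{\b t},\hat{\vec{\b \sigma}},\hat p)$ and forming the combination (first) $-$ (second) $-$ (third) cancels the $B$-terms and assembles all $1/\lambda_s$-contributions of $C$, $B_2$, $B_2^*$ and $D$ into $\frac{1}{\lambda_s}\Vert\hat{\widetilde p}-\alpha\hat p\Vert^2_{\mathrm{L}^2(\Omega)}\ge 0$; and your decisive manoeuvre --- substituting the inf-sup bound $\hat\beta\Vert\hat{\vec{\b \sigma}}\Vert_{\mathbf{X}}\le \Vert\widetilde H_\omega\Vert_{\mathbb{L}^2(\Omega)'}+2\mu_s\Vert\hat{\b t}\Vert_{\mathbb{L}^2(\Omega)}$ (obtained from \eqref{VF-poro_1_reduced} and Lemma \ref{inf_sup_B}, exactly as in \eqref{inf-sup-stab}) into the pairing $\Vert\widetilde F_1\Vert_{\mathbf{X}'}\Vert\hat{\vec{\b \sigma}}\Vert_{\mathbf{X}}$ \emph{before} applying Young's inequality --- is precisely what renders the otherwise non-absorbable $\widetilde F_1$-term harmless, since the coercive part of the identity gives no control of $\Vert\hat{\vec{\b \sigma}}\Vert_{\mathbf{X}}$. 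The paper instead argues by superposition in the style of \cite[Theorem 2.1]{kumar20} and \cite[Theorem 4.3.1]{boffi13}: its Step 1 ($\widetilde F_1=0$) is essentially your argument in the case where the problematic pairing is absent, while Step 2 ($\widetilde H_\omega=\widetilde G=0$) handles $\widetilde F_1$ through the auxiliary quadratic form $S$, the orthogonal splitting $\hat{\b t}=\hat{\b t}_0+\hat{\b t}^\perp$ relative to $\widetilde{\mathbb{V}}$, and the inf-sup condition \eqref{inf_sup_orthogonal}; the two partial bounds are then combined by linearity. Your one-shot argument dispenses with both the superposition and the orthogonal decomposition, and it also avoids the paper's constant $C_S$, whose independence of $\lambda_s$ the paper only justifies in the limit $\lambda_s\to\infty$, whereas your constants are explicitly $\mu_s$, $\hat\beta$ and $\min\{c_0,\kappa_1/\mu_f\}$. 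What the paper's modular route buys is conformity with a standard perturbed-saddle-point template that it reuses at the discrete level; note, however, that your argument carries over verbatim to the Galerkin scheme, since the discrete inf-sup condition for $B$ on $\mathbb{V}_h$ is available in \eqref{inf_sup_B1_h}. One shared limitation (not a gap relative to the paper): both proofs rest on $\min\{c_0,\kappa_1/\mu_f\}>0$, so robustness as $c_0\to 0$ would additionally require a Poincar\'e inequality on $\mathrm{H}^1_\Sigma(\Omega)$.
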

\begin{proof}
	We proceed similarly as in \cite[Theorem 2.1]{kumar20} (see also \cite[Theorem 4.3.1]{boffi13}), to obtain estimates and then use linearity. Firstly, we assume  that $\widetilde{F}_1=0$ and  bound  the  solution  in  terms  of $\widetilde{H}_\omega$ and $\widetilde{G}$, and secondly,  we  assume  that $\widetilde{H}_\omega=0, \, \widetilde{G}=0$ and deduce an estimate for the solution in terms of $\widetilde{F}_1$. \\\\
	\textbf{Step 1.} ($\widetilde{F}_1=0$). Proceeding exactly as in the proof of Lemma \ref{F-injective}, it can be deduced the bounds 
%
%
\begin{subequations}	\begin{align}\label{bound_t_p}
	\Vert \hat{\b t}\Vert_{\mathbb{L}^2(\Omega)}+\Vert \hat{p}\Vert_{\mathrm{H}^1(\Omega)} & \leq C_1\left(\Vert \widetilde{H}_\omega\Vert_{\mathbb{L}^2(\Omega)'}+\Vert \widetilde{G}\Vert_{\mathrm{H}^1_{\Sigma}(\Omega)'}\right),
	\\
\label{inf-sup-stab}
	\hat{\beta}\Vert \hat{\vec{\b \sigma}}\Vert_{\mathbf{X}}&\leq \sup_{\mbox{\scriptsize $\b r\in \mathbb{L}^2(\Omega) \atop \b r\neq 0$}}\frac{[B(\b r),\hat{\vec{\b \sigma}}]}{\Vert{\b r\Vert_{\mathbb{L}^2(\Omega)}}}=\sup_{\mbox{\scriptsize $\b r\in \mathbb{L}^2(\Omega) \atop \b r\neq 0$}}\frac{[\widetilde{H}_\omega,\b r]-[A(\hat{\b t}), \b r]}{\Vert{\b r\Vert_{\mathbb{L}^2(\Omega)}}}\leq \Vert\widetilde{H}_\omega\Vert_{\mathbb{L}^2(\Omega)'}+2\mu_s\Vert\hat{\b t}\Vert_{\mathbb{L}^2(\Omega)},
	\end{align}\end{subequations}
	where $C_1$ is a constant independent of $\lambda_s$. Therefore, the desired estimate then follows from \eqref{bound_t_p} and \eqref{inf-sup-stab}. \\\\
	\textbf{Step 2.} ($\widetilde{H}_\omega=0$ and $\widetilde{G}=0$). We start by defining \begin{equation*}
	[S(\hat{\vec{\b \sigma}},\hat{p}),({\vec{\b \tau}},q)]:=[C(\hat{\vec{\b \sigma}}),\vec{\b \tau}]-[B_2^*(\hat{p}),\vec{\b \tau}]-[B_2(\hat{\vec{\b \sigma}}),q]+D[(\hat{p}),q] =\frac{1}{\lambda_s}\int_{\Omega}(\hat{\widetilde{p}}-\alpha \hat{p})(\hat{\widetilde{q}}-\alpha q)+c_0\int_\Omega\hat{p} q,\end{equation*} 
and noticing, for all $\hat{\vec{\b \sigma}}, \vec{\b \tau}\in \mathbb{V}$ and $\hat{p},q \in \mathrm{H}^1_\Sigma(\Omega)$, that 
	\begin{align}\label{bound_S}
	\nonumber 	|[S(\hat{\vec{\b \sigma}},\hat{p}),(\vec{\b \tau},q)]|&\leq [S(\hat{\vec{\b \sigma}},\hat{p}),(\hat{\vec{\b \sigma}},\hat{p})]^{1/2}[S(\vec{\b \tau},q),(\vec{\b \tau},q)]^{1/2}\\
		& =\left(\frac{1}{\lambda_s}\Vert \hat{\widetilde{p}}-\alpha \hat{p}  \Vert^2_{\mathrm{L}^2(\Omega)}+c_0\Vert \hat{p}\Vert^2_{\mathrm{L}^2(\Omega)}\right)^{1/2}\left(\frac{1}{\lambda_s}\Vert \hat{\widetilde{q}}-\alpha q  \Vert^2_{\mathrm{L}^2(\Omega)}+c_0\Vert q\Vert^2_{\mathrm{L}^2(\Omega)}\right)^{1/2}.
	\end{align}
	Additionally, from the inf-sup condition \eqref{ineq_inf_sup_B} it can be deduced that
	\begin{equation}\label{inf_sup_orthogonal}
 \sup_{\vec{\b \tau}\in \mathbb{V} \setminus\{\b 0\}} \frac{ [B({\b r^\perp}),\vec{\b \tau}]}{\Vert\vec{\b \tau}\Vert_{\mathbf{X}}}\geq \hat{\beta}\norm{\b r^\perp}_{\mathbb{L}^2(\Omega)}\quad \forall\, \b r^\perp\in  \widetilde{\mathbb{V}}^\perp,
	\end{equation} 
where $\widetilde{\mathbb{V}}:=\left\{\hat{\b r}\in \mathbb{L}^2(\Omega): [B(\hat{\b r}),\vec{\b \tau}]=0\ \forall\,\vec{\b \tau}\in \mathbb{V}\right\}.$ Now, we let $\hat{\b t}_0\in \widetilde{\mathbb{V}}$ and $\hat{\b t}^\perp\in \widetilde{\mathbb{V}}^\perp$ be such that $\hat{\b t}=\hat{\b t}_0+\hat{\b t}^\perp$, and observe from \eqref{VF-poro_1_reduced}-\eqref{VF-poro_3_reduced} that there hold
\begin{subequations}
\begin{align}\label{reduced_orthogonal_t}
[A(\hat{\b t}),\hat{\b t}]+[S(\hat{\vec{\b \sigma}},\hat{p}),(\hat{\vec{\b \sigma}},\hat{p})]&=-\widetilde{F}_1(\hat{\vec{\b \sigma}}),\\
\label{reduced_orthogonal}
[B(\hat{\b t}^\perp),\vec{\b \tau}]-[S(\hat{\vec{\b \sigma}},\hat{p}),(\vec{\b \tau},q)]&=\widetilde{F}_1(\vec{\b \tau})\quad \forall\,\vec{\b \tau}\in \mathbb{V}, \quad \forall\,q\in \mathrm{H}^1_\Sigma(\Omega).
\end{align}
\end{subequations}
Thus, from \eqref{inf_sup_orthogonal} with $\b r^\perp=\hat{\b t}^\perp$,  estimate \eqref{reduced_orthogonal}, the continuity of $\widetilde{F}_1$ and the bound \eqref{bound_S}, we get
\begin{align}
\label{bound_t_orthogonal}
\nonumber	\hat{\beta}\Vert\hat{\b t}^\perp\Vert_{\mathbb{L}^2(\Omega)}&\leq \sup_{\vec{\b \tau}\in \mathbb{V} \setminus\{\b 0\}} \frac{ [B({\hat{\b t}^\perp}),\vec{\b \tau}]}{\Vert\vec{\b \tau}\Vert_{\mathbf{X}}}=\sup_{\vec{\b \tau}\in \mathbb{V} \setminus\{\b 0\}} \frac{ \widetilde{F}_1(\vec{\b \tau})+[S(\hat{\vec{\b \sigma}},\hat{p}),(\vec{\b \tau},q)]}{\Vert\vec{\b \tau}\Vert_{\mathbf{X}}}\\
	&\leq  \Vert\widetilde{F}_1\Vert_{\mathbf{X}'}+[S(\hat{\vec{\b \sigma}},\hat{p}),(\hat{\vec{\b \sigma}},\hat{p})]^{1/2}\sup_{\vec{\b \tau}\in \mathbb{V} \setminus\{\b 0\}} \frac{\left(\frac{1}{\lambda_s}\Vert \hat{\widetilde{q}}-\alpha q  \Vert^2_{\mathrm{L}^2(\Omega)}+c_0\Vert q\Vert^2_{\mathrm{L}^2(\Omega)}\right)^{1/2}}{\Vert\vec{\b \tau}\Vert_{\mathbf{X}}}.
\end{align}
Then, defining 
\begin{equation*}
C_S:=\sup_{\vec{\b \tau}\in \mathbb{V} \setminus\{\b 0\}} \frac{\left(\frac{1}{\lambda_s}\Vert \hat{\widetilde{q}}-\alpha q  \Vert^2_{\mathrm{L}^2(\Omega)}+c_0\Vert q\Vert^2_{\mathrm{L}^2(\Omega)}\right)^{1/2}}{\Vert\vec{\b \tau}\Vert_{\mathbf{X}}},
\end{equation*}
which can be seen as a constant independent of $\lambda_s$ if $\lambda_s\rightarrow\infty$, and noticing from \eqref{reduced_orthogonal_t} that $$[S(\hat{\vec{\b \sigma}},\hat{p}),(\hat{\vec{\b \sigma}},\hat{p})]\leq- \widetilde{F}_1(\hat{\vec{\b \sigma}}),$$ it can be deduced from \eqref{bound_t_orthogonal} that 
\begin{equation}\label{bound_t_orthogonal_final}
	\hat{\beta}\Vert\hat{\b t}^\perp\Vert_{\mathbb{L}^2(\Omega)}\leq  \Vert\widetilde{F}_1\Vert_{\mathbf{X}'}+C_S[S(\hat{\vec{\b \sigma}},\hat{p}),(\hat{\vec{\b \sigma}},\hat{p})]^{1/2}\leq \Vert\widetilde{F}_1\Vert_{\mathbf{X}'}+C_S\Vert\widetilde{F}_1\Vert_{\mathbf{X}'}^{1/2}\Vert\hat{\vec{\b \sigma}}\Vert_{\mathbf{X}}^{1/2}.
\end{equation}
Furthermore, taking $\hat{\b t}_0$ in \eqref{VF-poro_1_reduced}, applying the ellipticity and continuity of   $A$, and recalling \eqref{eq:deco},   we easily get 
\begin{equation}\label{bound_hat_t}
	\Vert\hat{\b t}\Vert_{\mathbb{L}^2(\Omega)}\leq (1+\widetilde{C}_1)\Vert\hat{\b t}^\perp\Vert_{\mathbb{L}^2(\Omega)}.
\end{equation}
In this way, from \eqref{bound_hat_t}, the inf-sup condition \eqref{ineq_inf_sup_B} and equation \eqref{VF-poro_1_reduced}, it readily follows that 
\begin{equation}\label{inf-sup-hat_sigma}
\Vert \hat{\vec{\b \sigma}}\Vert_{\mathbf{X}}\leq \hat{\beta}^{-1}\sup_{\mbox{\scriptsize $\b r\in \mathbb{L}^2(\Omega) \atop \b r\neq 0$}}\frac{[B(\b r),\hat{\vec{\b \sigma}}]}{\Vert{\b r\Vert_{\mathbb{L}^2(\Omega)}}}=\hat{\beta}^{-1}\sup_{\mbox{\scriptsize $\b r\in \mathbb{L}^2(\Omega) \atop \b r\neq 0$}}\frac{-[A(\hat{\b t}), \b r]}{\Vert{\b r\Vert_{\mathbb{L}^2(\Omega)}}}\leq 2\mu_s\hat{\beta}^{-1}\Vert\hat{\b t}\Vert_{\mathbb{L}^2(\Omega)}\leq \widetilde{C}_2\Vert\hat{\b t}^\perp\Vert_{\mathbb{L}^2(\Omega)},
\end{equation}
which combined with \eqref{bound_t_orthogonal_final} and the Young's inequality yields 
\begin{equation*}
	\Vert\hat{\b t}^\perp\Vert_{\mathbb{L}^2(\Omega)}\leq \widetilde{C}_3\Vert\widetilde{F}_1\Vert_{\mathbf{X}'}.
\end{equation*}
Therefore, from the latter inequality, \eqref{bound_hat_t} and \eqref{inf-sup-hat_sigma}, we derive that 
\begin{equation}\label{final_bound_hat_t_sigma}
	\Vert\hat{\b t}\Vert_{\mathbb{L}^2(\Omega)}\leq \widetilde{C}_4\Vert\widetilde{F}_1\Vert_{\mathbf{X}'}\quad \mathrm{and}\quad \Vert \hat{\vec{\b \sigma}}\Vert_{\mathbf{X}}\leq\widetilde{C}_5\Vert\widetilde{F}_1\Vert_{\mathbf{X}'}, 
\end{equation}
where $\widetilde{C}_4>0$ and $\widetilde{C}_5>0$ are constants independent of $\lambda_s$. 
Finally, the ellipticity of   $D$ and \eqref{VF-poro_3_reduced} yield 
\begin{equation}\label{bound_hat_p}
\Vert \hat{p}\Vert^2_{\mathrm{H}^1(\Omega)}\leq \widetilde{C}_6\Vert \hat{p}\Vert_{\mathrm{H}^1(\Omega)}\Vert \hat{\vec{\b \sigma}}\Vert_{\mathbf{X}}.
\end{equation}
In this way, the result follows from \eqref{final_bound_hat_t_sigma} and \eqref{bound_hat_p}, and after apply some computations.
\end{proof}
We summarise the foregoing results in the following lemma.
\begin{lemma}\label{well-posed-poro}
	For each $\omega\in \mathrm{H}^1_\Gamma(\Omega)$ the problem \eqref{VF-poro_1_reduced}-\eqref{VF-poro_3_reduced} has a unique solution $(\hat{\b t}, \hat{\vec{\b \sigma}},\hat{p})\in \mathbb{L}^2(\Omega)\times\mathbb{V}\times\mathrm{H}^1_{\Sigma}(\Omega)$. Moreover, there exists $\hat{C}>0$ independent of $\omega$ and $\lambda_s$, such that
	\begin{equation*}
	\Vert \hat{\b t}\Vert_{\mathbb{L}^2(\Omega)}+\Vert \hat{\vec{\b \sigma}}\Vert_{\mathbf{X}}+\Vert \hat{p}\Vert_{\mathrm{H}^1(\Omega)}\leq \hat{C}\left(\Vert \widetilde{H}_\omega\Vert_{\mathbb{L}^2(\Omega)'}+\Vert \widetilde{F}_1\Vert_{\mathbf{X}'}+\Vert \widetilde{G}\Vert_{\mathrm{H}^1_{\Sigma}(\Omega)'}\right).\end{equation*}
\end{lemma}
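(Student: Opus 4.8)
The plan is to recognise that Lemma~\ref{well-posed-poro} is a summary statement whose content has already been assembled in Lemmas~\ref{F-invert}--\ref{stability-lemma}; the proof therefore amounts to combining these pieces via the Fredholm alternative and then verifying the claimed independence of the stability constant on $\omega$.

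First I would establish unique solvability. The reduced problem \eqref{VF-poro_1_reduced}--\eqref{VF-poro_3_reduced}, with $\omega$ given, is equivalent to the operator equation \eqref{operator_problem}, namely $(\mathcal{S}+\mathcal{T})\vec{\b t}=\mathcal{F}$ on $\mathbf{H}$. By Lemma~\ref{F-invert} the operator $\mathcal{S}$ is invertible, so I factor $\mathcal{S}+\mathcal{T}=\mathcal{S}(\mathrm{Id}+\mathcal{S}^{-1}\mathcal{T})$. Since $\mathcal{T}$ is compact by Lemma~\ref{F-compact} and $\mathcal{S}^{-1}$ is bounded, the composition $\mathcal{S}^{-1}\mathcal{T}$ is compact, whence $\mathrm{Id}+\mathcal{S}^{-1}\mathcal{T}$ is a compact perturbation of the identity and is Fredholm of index zero. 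Lemma~\ref{F-injective} gives injectivity of $\mathcal{S}+\mathcal{T}$, which—because $\mathcal{S}$ is invertible—forces $\mathrm{Id}+\mathcal{S}^{-1}\mathcal{T}$ to be injective; the Fredholm alternative then upgrades injectivity to bijectivity. Hence $\mathcal{S}+\mathcal{T}$ is bijective and \eqref{operator_problem} possesses a unique solution $(\hat{\b t},\hat{\vec{\b \sigma}},\hat{p})\in\mathbf{H}$, which is exactly the asserted existence and uniqueness.

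Second I would read off the stability bound directly from Lemma~\ref{stability-lemma}, which already delivers the inequality with a constant $\hat{C}$ independent of $\lambda_s$. The only additional assertion here is independence on $\omega$. To see it, I observe that in \eqref{VF-poro_1_reduced}--\eqref{VF-poro_3_reduced} the given concentration enters \emph{exclusively} through the right-hand side functional $\widetilde{H}_\omega$; all the bilinear forms and therefore every structural constant used in the proof of Lemma~\ref{stability-lemma}—the coercivity constants $2\mu_s$, $c_0$, $\kappa_1/\mu_f$, the inf-sup constant $\hat{\beta}$ of Lemma~\ref{inf_sup_B}, and the auxiliary constant $C_S$—are built solely from the fixed problem data and are manifestly independent of $\omega$. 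Consequently $\hat{C}$ does not depend on $\omega$, and the estimate holds uniformly over $\omega\in\mathrm{H}^1_\Gamma(\Omega)$.

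The only point requiring care, rather than a genuine obstacle, is the bookkeeping of constants: one must confirm that no step in the chain of estimates of Lemma~\ref{stability-lemma} secretly absorbs a factor depending on $\omega$ or blowing up as $\lambda_s\to\infty$. Since $\omega$ appears only in the data and the $\lambda_s$-robustness was the explicit design goal of the total-pressure reformulation—the $1/\lambda_s$ contributions enter only with favourable signs in $C$ and in $S$, and $C_S$ stays bounded as $\lambda_s\to\infty$—this verification is routine, and the lemma follows by linearity of the data-to-solution map.
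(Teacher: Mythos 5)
Your proposal is correct and follows essentially the same route as the paper: the paper presents Lemma~\ref{well-posed-poro} as a summary obtained by combining the Fredholm alternative (via Lemmas~\ref{F-invert}, \ref{F-compact}, and \ref{F-injective}) for unique solvability with the stability estimate of Lemma~\ref{stability-lemma}. Your additional remarks — the explicit factorisation $\mathcal{S}+\mathcal{T}=\mathcal{S}(\mathrm{Id}+\mathcal{S}^{-1}\mathcal{T})$ and the observation that $\omega$ enters only through the functional $\widetilde{H}_\omega$ so that $\hat{C}$ is independent of $\omega$ — are exactly the details the paper leaves implicit, and they are argued correctly.
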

Finally, the main result is given next.
\begin{lemma}\label{lemma_main}
		For each $\omega\in \mathrm{H}^1_\Gamma(\Omega)$ the problem \eqref{VF-poro_1}-\eqref{VF-poro_4} has a unique solution $(\b t, \vec{\b \sigma},\vec{\b u},p)\in \mathbb{L}^2(\Omega)\times\mathbf{X}\times\mathbf{M}\times\mathrm{H}^1_{\Sigma}(\Omega)$. Moreover, there exists $C>0$ independent of $\omega$ and $\lambda_s$, such that
		\begin{equation}\label{stability_main}
	\Vert \b t\Vert_{\mathbb{L}^2(\Omega)}+\Vert \vec{\b \sigma}\Vert_{\mathbf{X}}+\Vert \vec{\b u}\Vert_{\mathbf{M}}+\Vert p\Vert_{\mathrm{H}^1(\Omega)} \leq C\left(\Vert \widetilde{H}_\omega\Vert_{\mathbb{L}^2(\Omega)'}+\Vert\widetilde{F}_1\Vert_{\mathbf{X}'}+\Vert \widetilde{F}\Vert_{\mathbf{M}'}+\Vert \widetilde{G}\Vert_{\mathrm{H}^1_{\Sigma}(\Omega)'}\right).\end{equation}
\end{lemma}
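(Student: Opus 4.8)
The plan is to lift the well-posedness of the reduced problem on the kernel $\mathbb{V}$ (Lemma \ref{well-posed-poro}) to the full twofold saddle-point system \eqref{VF-poro_1}-\eqref{VF-poro_4} by means of the inf-sup condition for $B_1$ (Lemma \ref{lem_inf_sup_B1}). This is the classical Babu\v{s}ka--Brezzi mechanism, with the reduced operator playing the role of the invertible leading block and $B_1$ supplying the constraint that pins down the additional unknown $\vec{\b u}=(\b u,\b\gamma)$.

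First I would invoke the surjectivity of $B_1:\mathbf{X}\to\mathbf{M}'$ guaranteed by Lemma \ref{lem_inf_sup_B1} to select a particular $\vec{\b \sigma}_\star\in\mathbf{X}$ (in the orthogonal complement of $\mathbb{V}$) with $B_1(\vec{\b \sigma}_\star)=\widetilde{F}$ and $\Vert\vec{\b \sigma}_\star\Vert_{\mathbf{X}}\le\hat\beta_1^{-1}\Vert\widetilde{F}\Vert_{\mathbf{M}'}$. Writing $\vec{\b \sigma}=\vec{\b \sigma}_0+\vec{\b \sigma}_\star$ with $\vec{\b \sigma}_0\in\mathbb{V}$ and restricting the test functions $\vec{\b \tau}$ in \eqref{VF-poro_2} to $\mathbb{V}$, the term $[B_1^*(\vec{\b u}),\vec{\b \tau}]$ drops out because $[B_1^*(\vec{\b u}),\vec{\b \tau}]=[B_1(\vec{\b \tau}),\vec{\b u}]=0$ for $\vec{\b \tau}\in\mathbb{V}$. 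What remains is precisely the reduced system \eqref{VF-poro_1_reduced}-\eqref{VF-poro_3_reduced} for the triple $(\b t,\vec{\b \sigma}_0,p)$, with the three functionals replaced by $\widetilde{H}_\omega-B^*(\vec{\b \sigma}_\star)$, $\widetilde{F}_1+C(\vec{\b \sigma}_\star)$, and $\widetilde{G}-B_2(\vec{\b \sigma}_\star)$. Lemma \ref{well-posed-poro} then yields a unique $(\b t,\vec{\b \sigma}_0,p)$ together with a stability bound; since each modification is controlled by $\Vert\vec{\b \sigma}_\star\Vert_{\mathbf{X}}\lesssim\Vert\widetilde{F}\Vert_{\mathbf{M}'}$ through the bounded norms of $B^*$, $C$ and $B_2$, the estimate can be rewritten in terms of all four data $\widetilde{H}_\omega,\widetilde{F}_1,\widetilde{F},\widetilde{G}$, and $\vec{\b \sigma}=\vec{\b \sigma}_0+\vec{\b \sigma}_\star$ is thereby fixed.

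It then remains to recover $\vec{\b u}$ and close the estimate. Testing \eqref{VF-poro_2} against an arbitrary $\vec{\b \tau}\in\mathbf{X}$ isolates $[B_1^*(\vec{\b u}),\vec{\b \tau}]=[\widetilde{F}_1,\vec{\b \tau}]-[B(\b t),\vec{\b \tau}]+[C(\vec{\b \sigma}),\vec{\b \tau}]-[B_2^*(p),\vec{\b \tau}]$, whose right-hand side is bounded by the already-controlled quantities $\Vert\b t\Vert_{\mathbb{L}^2(\Omega)}$, $\Vert\vec{\b \sigma}\Vert_{\mathbf{X}}$, $\Vert p\Vert_{\mathrm{H}^1(\Omega)}$ and $\Vert\widetilde{F}_1\Vert_{\mathbf{X}'}$. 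Applying Lemma \ref{lem_inf_sup_B1} in its adjoint form, $\hat\beta_1\Vert\vec{\b u}\Vert_{\mathbf{M}}\le\sup_{\vec{\b \tau}}[B_1^*(\vec{\b u}),\vec{\b \tau}]/\Vert\vec{\b \tau}\Vert_{\mathbf{X}}$, delivers the bound on $\vec{\b u}$ and completes \eqref{stability_main}. Uniqueness follows by setting all data to zero: the reduced homogeneous problem forces $\b t=0$, $\vec{\b \sigma}=0$, $p=0$ by the injectivity contained in Lemma \ref{well-posed-poro} (cf. Lemma \ref{F-injective}), and the inf-sup for $B_1$ then forces $\vec{\b u}=0$.

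I expect the only genuinely delicate point to be the \emph{robustness with respect to $\lambda_s$}. The inf-sup constant $\hat\beta_1$ is purely geometric and independent of $\lambda_s$, and Lemma \ref{well-posed-poro} already supplies a $\lambda_s$-independent constant for the kernel problem; the terms that could in principle spoil uniformity are those carrying the factor $1/\lambda_s$, namely $C$ and $B_2^*$, which enter both the modified functionals and the recovery of $\vec{\b u}$. Since the regime of interest is $\lambda_s\to\infty$, these factors stay bounded and the chain of estimates preserves independence of $\lambda_s$; nonetheless one must track each constant explicitly to ensure that no hidden $\lambda_s$-dependence is introduced in passing from the reduced triple to the full solution.
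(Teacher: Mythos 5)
Your proposal is correct and follows essentially the same route as the paper's proof: lifting $\widetilde{F}$ to an element of $\mathbb{V}^\perp$ via the inf-sup condition for $B_1$ (your $\vec{\b\sigma}_\star$ is the paper's $\vec{\b\sigma}_0 = B_1^{-1}(\widetilde{F})$), solving the kernel problem with the same shifted functionals $\widetilde{H}_\omega - B^*(\vec{\b\sigma}_\star)$, $\widetilde{F}_1 + C(\vec{\b\sigma}_\star)$, $\widetilde{G} - B_2(\vec{\b\sigma}_\star)$ through Lemma \ref{well-posed-poro}, recovering $\vec{\b u}$ from the adjoint inf-sup (the paper uses the isomorphism $(B_1^*)^{-1}$ with norm bound $1/\hat\beta_1$, which is the same device), and tracking the $1/\lambda_s$ factors in $C$ and $B_2$ exactly as the paper does. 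The only cosmetic difference is that you argue uniqueness via the homogeneous problem while the paper compares two solutions directly; by linearity these are identical.
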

\begin{proof}
We proceed similarly as in \cite[Theorem 2.1]{gatica03}. Recalling from \cite[Lemma 4.1]{girault_book86}  that the result given in Lemma \ref{lem_inf_sup_B1} implies that $B_1:\mathbb{V}^\perp\rightarrow \mathbf{M}'$ and $B_1^*:\mathbf{M}\rightarrow \mathbb{V}^\circ$ are isomorphisms with \begin{equation}\label{bound_B_B*}
\Vert B_1\Vert, \quad \Vert(B_1^*)^{-1}\Vert\leq \frac{1}{\hat{\beta_1}},
\end{equation} where $\mathbb{V}^\circ$ stands for the set of functionals in $\mathbf{X}$ that vanish on the elements of $\mathbb{V}$. Now, let $\vec{\b \sigma}_0:=B_1^{-1}(\widetilde{F})\in \mathbb{V}^\perp$, and notice that by using \eqref{bound_B_B*} it can be deduce that 
\begin{equation}\label{bound_sigma0}
\Vert \vec{\b \sigma}_0\Vert_{\mathbf{X}}\leq \frac{1}{\beta_1}\Vert \widetilde{F}\Vert_{\mathbf{M}'}.
\end{equation} 
With this in mind, we define the functionals $\underline{H}_\omega:=\widetilde{H}_\omega-B^*(\vec{\b \sigma}_0)$, $\underline{F}_1:=\widetilde{F}_1+C(\vec{\b \sigma}_0)$ and $\underline{G}:=\widetilde{G}-B_2(\vec{\b \sigma}_0)$, and consider the following problem: Find $(\underline{\b t}, \underline{\vec{\b \sigma}},\underline{p})\in \mathbb{L}^2(\Omega)\times\mathbb{V}\times\mathrm{H}^1_{\Sigma}(\Omega)$ such that 
	\begin{alignat}{11}
	\nonumber&[A(\underline{\b t}), \b r]&+\quad&[B^*(\underline{\vec{\b \sigma}}),\b r]&&&&&=& \;\;[\underline{H}_{\omega},\b r],&\\
	\label{perturbed-problem}&[B(\underline{\b t}), \vec{\b \tau}]\;\;\,&-\quad& [C(\underline{\vec{\b \sigma}}),\vec{\b \tau}]\;\;\,&&\;&+&\;\; [B_2^*(\underline{p}),\vec{\b \tau}]\;&=&\;\;[\underline{F},\vec{\b \tau}],&\\
	\nonumber&&&[B_2(\underline{\vec{\b \sigma}}),q]&&&-&\;\;[D(\underline{p}),q]&=& \;\;[\underline{G},q],&
	\end{alignat}
for all $(\b r,\vec{\b \tau},q)\in \mathbb{L}^2(\Omega)\times\mathbb{V}\times \mathrm{H}^1_{\Sigma}(\Omega)$, where the involved operators are exactly the ones defining  \eqref{VF-poro_1_reduced}-\eqref{VF-poro_3_reduced}. Therefore, by noticing that $\underline{H}_\omega \in \mathbb{L}^2(\Omega)'$, $\underline{F}\in \mathbb{V}^\circ$ and $\underline{G}\in \mathrm{H}^1_\Sigma(\Omega)'$, we can simply take $\widetilde{H}_\omega$, $\widetilde{F}_1$ and $\widetilde{G}$ in \eqref{VF-poro_1_reduced}-\eqref{VF-poro_3_reduced} as $\underline{H}_\omega$, $\underline{F}$ and $\underline{G}$, respectively, and apply  Lemma \ref{stability-lemma} to assert the existence and uniqueness of a solution $(\underline{\b t}, \underline{\vec{\b \sigma}},\underline{p})\in \mathbb{L}^2(\Omega)\times\mathbb{V}\times\mathrm{H}^1_{\Sigma}(\Omega)$ to problem \eqref{perturbed-problem}. 

On the other hand, since $(\underline{F}-B(\underline{\b t})+C(\underline{\vec{\b \sigma}})-B^*_2(\underline{p}))\in \mathbb{V}^\circ$, we can take $\underline{\vec{\b u}}:=(B_1^*)^{-1}(\underline{F}-B(\underline{\b t})+C(\underline{\vec{\b \sigma}})-B^*_2(\underline{p}))\in \mathbf{M}$, from which, it is clear that 
\begin{equation}\label{def_u}
	[B_1^*(\underline{\vec{\b u}}),{\vec{\b \tau}}]=[\underline{F}-B(\underline{\b t})+C(\underline{\vec{\b \sigma}})-B^*_2(\underline{p}),{\vec{\b \tau}}]\quad\forall\,{\vec{\b \tau}}\in \mathbf{X}, 
\end{equation}
and therefore, \begin{equation}\label{bound_u_main}
\Vert{\underline{\vec{\b u}}}\Vert_{\mathbf{M}}\leq\frac{1}{\beta_1}\Vert \underline{F}-B(\underline{\b t})+C(\underline{\vec{\b \sigma}})-B^*_2(\underline{p})\Vert.
\end{equation}
In this manner, noting also that $B_1(\underline{\vec{\b \sigma}}+\vec{\b \sigma}_0)=B_1(\vec{\b \sigma}_0)=\widetilde{F}$, we finally conclude for each $\omega\in \mathrm{H}^1_\Gamma(\Omega)$, that $(\underline{\b t}, \underline{\vec{\b \sigma}}+\vec{\b \sigma}_0,\underline{\vec{\b u}},\underline{p})\in \mathbb{L}^2(\Omega)\times\mathbf{X}\times\mathbf{M}\times\mathrm{H}^1_{\Sigma}(\Omega)$ is solution of \eqref{VF-poro_1}-\eqref{VF-poro_4}.

Now, for the uniqueness, consider another solution $({\b t}, {\vec{\b \sigma}},{\vec{\b u}},{p})\in \mathbb{L}^2(\Omega)\times\mathbf{X}\times\mathbf{M}\times\mathrm{H}^1_{\Sigma}(\Omega)$ of \eqref{VF-poro_1}-\eqref{VF-poro_4}.
It is fairly directly seen, from \eqref{VF-poro_1}-\eqref{VF-poro_4}, that $({\b t}, {\vec{\b \sigma}}-\vec{\b \sigma}_0,{p})\in \mathbb{L}^2(\Omega)\times\mathbb{V}\times\mathrm{H}^1_{\Sigma}(\Omega)$ is also a solution of \eqref{perturbed-problem}, and therefore $({\b t}, {\vec{\b \sigma}}-\vec{\b \sigma}_0,{p})=(\underline{\b t}, \underline{\vec{\b \sigma}},\underline{p})$, which together with \eqref{def_u} gives $({\b t}, {\vec{\b \sigma}}, \vec{\b u},{p})=(\underline{\b t}, \underline{\vec{\b \sigma}}+\vec{\b \sigma}_0,\underline{\vec{\b u}},\underline{p})$.

On the other hand, in order to deduce the stability estimate \eqref{stability_main}  we begin by using \eqref{bound_u_main}, to obtain 
\[\Vert{{\vec{\b u}}}\Vert_{\mathbf{M}}\leq\frac{1}{\beta_1}\Vert -B({\b t})+C({\vec{\b \sigma}_0})-B^*_2({p})\Vert,
\]
from which, by using the first and second inequalities in \eqref{def-C-B2-D}, the inf-sup condition \eqref{ineq_inf_sup_B}, and the bound \eqref{bound_sigma0}, we deduce that there exists $\underline{C}_1>0$ independent of $\lambda_s$ such that 
\begin{equation}\label{bound_u_final}
\Vert{{\vec{\b u}}}\Vert_{\mathbf{M}}\leq\underline{C}_1\left\{\Vert \b t\Vert_{\mathbb{L}^2(\Omega)}+\frac{1}{\lambda_s}\Vert \widetilde{F}\Vert_{_{\mathbf{M}}'}+\frac{\alpha}{\lambda_s}\Vert p\Vert_{\mathrm{H}^1(\Omega)}\right\}.
\end{equation}

Moreover, applying   Lemma \ref{well-posed-poro} to \eqref{perturbed-problem}, we deduce the existence of $\underline{C}>0$, independent of $\lambda_s$, such that
\[
	\Vert \b t\Vert_{\mathbb{L}^2(\Omega)}+\Vert \vec{\b \sigma}-\vec{\b \sigma}_0\Vert_{\mathbf{X}}+\Vert p\Vert_{\mathrm{H}^1(\Omega)} \leq \underline{C}\left\{\Vert \widetilde{H}_\omega-B^*(\vec{\b \sigma}_0)\Vert+\Vert \widetilde{F}_1+C(\vec{\b \sigma}_0)\Vert+\Vert \widetilde{G}-B_2(\vec{\b \sigma}_0) \Vert\right\}.\]
Thus, from the definitions in   \eqref{def-A-H} and   \eqref{def-F-G-G1}, we proceed as before to get  
\begin{equation}\label{prel_bound_sigma}
\Vert \b t\Vert_{\mathbb{L}^2(\Omega)}+\Vert \vec{\b \sigma}\Vert_{\mathbf{X}}+\Vert p\Vert_{\mathrm{H}^1(\Omega)}\\
\leq \underline{C}_2\left\{\Vert \widetilde{H}_\omega\Vert_{\mathbb{L}^2(\Omega)'}+\Vert\widetilde{F}_1\Vert_{\mathbf{X}'}+\left(1+\frac{1}{\lambda_s}+\frac{\alpha}{\lambda_s}\right)\Vert \widetilde{F}\Vert_{\mathbf{M}'}+\Vert \widetilde{G}\Vert_{\mathrm{H}^1_\Sigma(\Omega)'}\right\},
\end{equation}
where $\underline{C}_2>0$ is  independent of $\lambda_s$. In this way, replacing \eqref{prel_bound_sigma} back into \eqref{bound_u_final}, we deduce that 
\begin{align*}
&\Vert \b t\Vert_{\mathbb{L}^2(\Omega)}+\Vert \vec{\b \sigma}\Vert_{\mathbf{X}}+\Vert \vec{\b u}\Vert_{\mathbf{M}}+\Vert p\Vert_{\mathrm{H}^1(\Omega)}\\
& \leq \underline{C}_3\left(1+\frac{\alpha}{\lambda_s}\right)\left\{\Vert \widetilde{H}_\omega\Vert_{\mathbb{L}^2(\Omega)'}+\Vert\widetilde{F}_1\Vert_{\mathbf{X}'}+\left(1+\frac{2}{\lambda_s}+\frac{\alpha}{\lambda_s}\right)\Vert \widetilde{F}\Vert_{\mathbf{M}'}+\Vert \widetilde{G}\Vert_{\mathrm{H}^1_\Sigma(\Omega)'}\right\}.\end{align*}
Therefore, noting that the constants $1+\frac{\alpha}{\lambda_s}$ and $1+\frac{2}{\lambda_s}+\frac{\alpha}{\lambda_s}$ are understood as independent of $\lambda_s$ in the limit $\lambda_s\to \infty$, we can apply elementary algebraic computations to deduce the existence of a constant $C$ independent of $\lambda_s$, such that \eqref{stability_main} holds.
\end{proof}

We establish now the well-posedness of the operator $\widetilde{\mathbf{S}}$. The following result asserts the unique solvability of \eqref{VF_diff} for a given $\b \sigma \in \mathbb{H}_{\Sigma}(\mathbf{div},\Omega)$.
\begin{lemma}\label{well-posed-dif}
	For each $\b \sigma \in \mathbb{H}_{\Sigma}(\mathbf{div},\Omega)$, the problem \eqref{VF_diff} has a unique solution $\omega:= \widetilde{\mathbf{S}}(\b \sigma)\in \mathrm{H}^1_{\Gamma}(\Omega)$, and there holds
	\begin{equation}\label{stabilization-diff_lemma}
		\Vert\widetilde{\mathbf{S}}(\b \sigma )\Vert:=\Vert \omega\Vert_{\mathrm{H}^1(\Omega)}\leq \widetilde{\alpha}^{-1}\Vert J\Vert_{\mathrm{H}^1_\Gamma(\Omega)'}.
	\end{equation}
\end{lemma}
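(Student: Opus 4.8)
The plan is to observe that, for a fixed $\b\sigma\in\mathbb{H}_{\Sigma}(\mathbf{div},\Omega)$, problem \eqref{VF_diff} is a \emph{linear} variational problem posed on $\mathrm{H}^1_{\Gamma}(\Omega)$, governed by the bilinear form $[\mathscr{A}_{\b\sigma}(\cdot),\cdot]$ and the right-hand side $J$ from \eqref{def-A-H}--\eqref{def-F-G-G1}. Since the diffusivity $D(\b\sigma)$ is frozen, the dependence on $\omega$ is linear, and the form is moreover symmetric and positive. Hence the natural route is a direct application of the Lax--Milgram lemma, which delivers existence, uniqueness, and the stated stability bound in one stroke.

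First I would verify continuity of $\omega\mapsto[\mathscr{A}_{\b\sigma}(\omega),\theta]$: using the Cauchy--Schwarz inequality together with the upper bound $\bv^{\mathrm t}D(\cdot)\bv\le d_2|\bv|^2$ from \eqref{prop-D} and the boundedness of the porosity $\phi$, both terms of $[\mathscr{A}_{\b\sigma}(\omega),\theta]$ are bounded by a multiple of $\Vert\omega\Vert_{\mathrm{H}^1(\Omega)}\Vert\theta\Vert_{\mathrm{H}^1(\Omega)}$, with constant depending only on $\Vert\phi\Vert_{\infty}$ and $d_2$. Likewise, boundedness of the functional $J$ follows from $|[J,\theta]|=|\int_\Omega\ell\phi\theta|\le\Vert\ell\phi\Vert_{\mathrm{L}^2(\Omega)}\Vert\theta\Vert_{\mathrm{H}^1(\Omega)}$, so that $J\in\mathrm{H}^1_{\Gamma}(\Omega)'$.

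The key step is coercivity, and this is where the hypotheses on the data enter. Testing with $\theta=\omega$ gives $[\mathscr{A}_{\b\sigma}(\omega),\omega]=\int_\Omega\phi\,\omega^2+\int_\Omega D(\b\sigma)\nabla\omega\cdot\nabla\omega$. The lower bound $d_1|\bv|^2\le\bv^{\mathrm t}D(\cdot)\bv$ in \eqref{prop-D} controls the gradient term by $d_1\Vert\nabla\omega\Vert^2_{\mathbf{L}^2(\Omega)}$, and the positivity of the porosity ($\phi\ge\phi_0>0$ a.e.) controls the zeroth-order term by $\phi_0\Vert\omega\Vert^2_{\mathrm{L}^2(\Omega)}$; together these yield $[\mathscr{A}_{\b\sigma}(\omega),\omega]\ge\widetilde{\alpha}\Vert\omega\Vert^2_{\mathrm{H}^1(\Omega)}$ with $\widetilde{\alpha}:=\min\{\phi_0,d_1\}$. (Alternatively, since $\omega$ vanishes on the nonempty portion $\Gamma$ of $\partial\Omega$, a Poincar\'e--Friedrichs inequality lets the gradient term alone furnish coercivity, so the argument remains robust even if $\phi$ only satisfies $\phi\ge 0$.)

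With the three hypotheses of Lax--Milgram in place, the unique solvability of \eqref{VF_diff} follows, and the a priori estimate is obtained by testing with $\theta=\omega$: $\widetilde{\alpha}\Vert\omega\Vert^2_{\mathrm{H}^1(\Omega)}\le[\mathscr{A}_{\b\sigma}(\omega),\omega]=[J,\omega]\le\Vert J\Vert_{\mathrm{H}^1_{\Gamma}(\Omega)'}\Vert\omega\Vert_{\mathrm{H}^1(\Omega)}$, whence $\Vert\omega\Vert_{\mathrm{H}^1(\Omega)}\le\widetilde{\alpha}^{-1}\Vert J\Vert_{\mathrm{H}^1_{\Gamma}(\Omega)'}$, exactly the claimed bound \eqref{stabilization-diff_lemma}. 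There is no serious obstacle here; the only point deserving emphasis is that $\widetilde{\alpha}$ is \emph{independent of} $\b\sigma$, depending only on $d_1$ and $\phi_0$ and not on the frozen argument. This uniformity is precisely what will later render $\widetilde{\mathbf S}$ uniformly bounded and feed cleanly into the fixed-point analysis for the operator $\mathbf{T}$.
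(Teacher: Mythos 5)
Your proof is correct and follows essentially the same route as the paper: ellipticity of $\mathscr{A}_{\b\sigma}$ from the uniform lower bound on $D$ in \eqref{prop-D} together with the positivity of $\phi$, followed by a direct application of the Lax--Milgram lemma and testing with $\theta=\omega$ to get the bound \eqref{stabilization-diff_lemma}. The only cosmetic difference is notational (the paper writes $\widetilde{\alpha}=\min\{D_0,\phi\}$ where you write $\min\{\phi_0,d_1\}$), and your added observations about Poincar\'e and the $\b\sigma$-independence of $\widetilde{\alpha}$ are consistent with how the constant is used later in the fixed-point argument.
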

	\begin{proof}
		We begin by applying the uniformly positive definiteness of $D$ (cf. \eqref{prop-D}) to obtain 
		\begin{equation}\label{ellipticity_A}
			\mathscr{A}_{\b \sigma}(\omega, \omega)\geq \phi\Vert\omega\Vert^2_{\mathrm{L}^2(\Omega)}+D_0|\omega|^2_{\mathrm{H}^1(\Omega)}\geq\widetilde{\alpha}\Vert\omega\Vert^2_{\mathrm{H}^1(\Omega)},
		\end{equation}
		which implies the ellipticity of $\mathscr{A}_{\b \sigma}(\cdot, \cdot)$, with $\widetilde{\alpha}:=\mathrm{min}\left\{D_0,\phi \right\}$. Thus, a straightforward application of Lax-Milgram's lemma (see, e.g., \cite[Thm. 1.1]{gatica14}), proves that for each $\b \sigma \in \mathbb{H}_{\Sigma}(\mathbf{div},\Omega)$, problem \eqref{VF_diff} has a unique solution $\omega= \widetilde{\mathbf{S}}(\b \sigma)\in \mathrm{H}^1_{\Gamma}(\Omega)$. Moreover, the corresponding continuous dependence on the data is formulated as
		\begin{equation}\label{stabilization-diff}
			\Vert \omega\Vert_{\mathrm{H}^1(\Omega)}\leq  \widetilde{\alpha}^{-1}\Vert J\Vert_{\mathrm{H}^1_\Gamma(\Omega)'}.
		\end{equation}
	\end{proof}
As $\mathbf{S}$ and $\widetilde{\mathbf{S}}$ are well-defined,  we can guarantee the well-posedness of the operator $\mathbf{T}$.

\subsection{Solvability of the fixed-point problem}\label{solvability_FP}
With the aim to use the Schauder fixed-point theorem on $\mathbf{T}$, in what follows we  establish sufficient conditions under which $\mathbf{T}$ maps a closed ball of $\mathrm{H}^1_\Gamma(\Omega)$ into itself. Indeed, from now on we let
\begin{equation}\label{ball_W}
	\mathbf{W}:=\left\{\omega \in \mathrm{H}^1_\Gamma(\Omega):\;\; \norm{\omega}_{\mathrm{H}^1(\Omega)}\leq r:=\widetilde{\alpha}^{-1}\Vert J\Vert_{\mathrm{H}^1_\Gamma(\Omega)'}\right\}.
\end{equation}
\begin{lemma}\label{bound_TW}
For the closed ball $\mathbf{W}$, it holds that $\mathbf{T}(\mathbf{W})\subseteq \mathbf{W}$. 
\end{lemma}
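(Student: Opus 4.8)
The plan is to observe that the radius $r$ in the definition \eqref{ball_W} of $\mathbf{W}$ has been chosen precisely so as to coincide with the uniform a priori bound supplied by Lemma~\ref{well-posed-dif}; consequently the inclusion $\mathbf{T}(\mathbf{W})\subseteq\mathbf{W}$ will follow unconditionally, with no smallness-of-data or compatibility hypothesis required. First I would fix an arbitrary $\omega\in\mathbf{W}$ and unwind the definition $\mathbf{T}(\omega)=\widetilde{\mathbf{S}}(\mathbf{S}_2(\omega))$. By Lemma~\ref{lemma_main}, the poroelastic subproblem \eqref{VF-poro_1}-\eqref{VF-poro_4} associated with this $\omega$ possesses a unique solution, whose stress component $\bsigma:=\mathbf{S}_2(\omega)$ belongs to $\HNdiv$; hence $\bsigma$ is an admissible argument of $\widetilde{\mathbf{S}}$ and $\mathbf{T}(\omega)\in\HG$ is well defined.

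Next I would simply apply the stability estimate \eqref{stabilization-diff_lemma} of Lemma~\ref{well-posed-dif} at the stress field $\bsigma=\mathbf{S}_2(\omega)$. The decisive point---and essentially the only thing that has to be verified---is that this bound $\Vert\widetilde{\mathbf{S}}(\bsigma)\Vert_{\mathrm{H}^1(\Omega)}\leq\widetilde{\alpha}^{-1}\Vert J\Vert_{\HG'}$ is entirely \emph{independent} of $\bsigma$. Indeed, the stress enters \eqref{VF_diff} only through the diffusivity $D(\bsigma)$, whose uniform positive definiteness \eqref{prop-D} furnishes the $\bsigma$-independent coercivity constant $\widetilde{\alpha}=\min\{D_0,\phi\}$ appearing in \eqref{ellipticity_A}, while the load functional $J$ (cf.\ \eqref{def-F-G-G1}) does not involve $\bsigma$ at all. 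Thus the right-hand side of \eqref{stabilization-diff_lemma} is the same fixed number for every input stress.

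Combining these observations yields $\Vert\mathbf{T}(\omega)\Vert_{\mathrm{H}^1(\Omega)}=\Vert\widetilde{\mathbf{S}}(\mathbf{S}_2(\omega))\Vert_{\mathrm{H}^1(\Omega)}\leq\widetilde{\alpha}^{-1}\Vert J\Vert_{\HG'}=r$, which is exactly the inequality defining membership in $\mathbf{W}$; since $\omega\in\mathbf{W}$ was arbitrary, $\mathbf{T}(\mathbf{W})\subseteq\mathbf{W}$ follows, and in fact $\mathbf{T}(\HG)\subseteq\mathbf{W}$. I do not anticipate any genuine obstacle in this lemma: unlike the analogous self-mapping step in classical stress-assisted diffusion analyses, where the stress enters the diffusive flux multiplicatively and one is forced to absorb $\Vert\bsigma\Vert$ through a data-smallness assumption, here the feedback of $\bsigma$ into the tracer equation is confined to the (uniformly elliptic) diffusion operator and never to the source term, so the estimate is unconditional. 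The only care needed is to confirm that $\widetilde{\alpha}$ depends solely on the data $D_0$ and $\phi$, and not on the particular stress produced by $\mathbf{S}_2$, which is precisely what \eqref{prop-D} and the proof of Lemma~\ref{well-posed-dif} guarantee.
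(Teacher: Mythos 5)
Your proposal is correct and follows essentially the same route as the paper: the paper's proof consists precisely of recalling the definition of $\mathbf{T}$ and applying the $\bsigma$-independent estimate \eqref{stabilization-diff_lemma}, whose bound $\widetilde{\alpha}^{-1}\Vert J\Vert_{\mathrm{H}^1_\Gamma(\Omega)'}$ is exactly the radius $r$ in \eqref{ball_W}. Your additional observations---that $\widetilde{\alpha}=\min\{D_0,\phi\}$ depends only on the data and that in fact $\mathbf{T}(\mathrm{H}^1_\Gamma(\Omega))\subseteq\mathbf{W}$ unconditionally---merely make explicit what the paper leaves implicit.
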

\begin{proof}
It suffices to recall the definition of $\mathbf{T}$ (\ref{def-T}), and then apply the estimate (\ref{stabilization-diff_lemma}).
\end{proof}
We now verify the hypotheses of the Schauder fixed-point theorem. Before
starting the result to be proved, we show the Lipschitz continuity property for the operators $\mathbf{S}$ and $\widetilde{\mathbf{S}}$, and then for $\mathbf{T}$.
\begin{lemma}\label{Lipschitz_S_WS}
	There exists a constant $C_{\mathbf{S}}>0$, independent of $\lambda_s$ such that 
	\begin{align*}
	\Vert\mathbf{S}(\omega_1)-\mathbf{S}(\omega_2)\Vert &\leq C_{\mathbf{S}}  \beta\Vert\omega_1-\omega_2\Vert_{\mathrm{L}^2(\Omega)}\quad \forall\, \omega_1, \omega_2\in \mathrm{H}^1_\Gamma(\Omega),
\\
	\Vert\widetilde{\mathbf{S}}(\b \sigma)-\widetilde{\mathbf{S}}(\widetilde{\b \sigma})\Vert &\leq \frac{d_3}{\widetilde{\alpha}}\Vert\widetilde{\b \sigma}-\b \sigma\Vert_{\mathbb{L}^2(\Omega)}\Vert\widetilde{\mathbf{S}}(\widetilde{\b \sigma})\Vert_{W^{1,\infty}(\Omega)}\quad \forall\, {\b \sigma}, \widetilde{\b \sigma}\in \mathbb{H}_\Sigma(\mathbf{div},\Omega),
	\end{align*}
	where $\beta, \widetilde{\alpha}$ and $d_3$ are the constants given in \eqref{eq:sigma}, \eqref{ellipticity_A} and \eqref{prop-D}, respectively.
\end{lemma}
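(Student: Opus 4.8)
The plan is to establish two Lipschitz estimates separately, each by exploiting the stability bounds already proved for the uncoupled problems. I examine the structure of the coupling: the operator $\mathbf S$ depends on $\omega$ only through the functional $H_\omega$ (see \eqref{def-A-H}), which is \emph{linear} in $\omega$, whereas $\widetilde{\mathbf S}$ depends on $\b\sigma$ only through the diffusion tensor $D(\b\sigma)$ inside the bilinear form $\mathscr A_{\b\sigma}$, which enters \emph{nonlinearly}. These two different couplings demand two different arguments.

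For the estimate on $\mathbf S$, I would use linearity in a crucial way. Set $\omega_1,\omega_2\in\HG$ and let $\mathbf S(\omega_i)=(\b t_i,(\b\sigma_i,\widetilde p_i),(\b u_i,\b\gamma_i),p_i)$ be the corresponding solutions of \eqref{VF-poro_1}--\eqref{VF-poro_4}. Because the operators $A,B,C,B_1,B_2,D$ and the functionals $F,G$ are independent of $\omega$, the difference $\mathbf S(\omega_1)-\mathbf S(\omega_2)$ solves the \emph{same} system \eqref{VF-poro_1}--\eqref{VF-poro_4} but with all right-hand sides zero except the first, whose datum is $H_{\omega_1}-H_{\omega_2}$. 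By definition $[H_{\omega_1}-H_{\omega_2},\b r]=\beta\int_\Omega(\omega_1-\omega_2)\,\tr(\b r)$, so a straightforward bound gives $\Vert H_{\omega_1}-H_{\omega_2}\Vert_{\mathbb L^2(\Omega)'}\le \beta\Vert\omega_1-\omega_2\Vert_{\mathrm L^2(\Omega)}$ (using $|\tr(\b r)|\le\sqrt d\,|\b r|$). I then invoke the stability estimate \eqref{stability_main} of Lemma~\ref{lemma_main} with $\widetilde H_\omega=H_{\omega_1}-H_{\omega_2}$ and $\widetilde F_1=\widetilde F=\widetilde G=0$, which yields a constant $C_{\mathbf S}>0$ \emph{independent of $\lambda_s$}, delivering the first inequality.

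For the estimate on $\widetilde{\mathbf S}$, I subtract the two weak formulations \eqref{VF_diff}, one posed with $D(\b\sigma)$ and one with $D(\widetilde{\b\sigma})$. Writing $\omega=\widetilde{\mathbf S}(\b\sigma)$ and $\widetilde\omega=\widetilde{\mathbf S}(\widetilde{\b\sigma})$, the difference $\omega-\widetilde\omega$ satisfies, for all $\theta\in\HG$,
\begin{equation*}
\int_\Omega\phi(\omega-\widetilde\omega)\theta+\int_\Omega D(\b\sigma)\nabla(\omega-\widetilde\omega)\cdot\nabla\theta=-\int_\Omega\bigl(D(\b\sigma)-D(\widetilde{\b\sigma})\bigr)\nabla\widetilde\omega\cdot\nabla\theta.
\end{equation*}
Testing with $\theta=\omega-\widetilde\omega$ and using the ellipticity \eqref{ellipticity_A} of $\mathscr A_{\b\sigma}$ on the left gives $\widetilde\alpha\Vert\omega-\widetilde\omega\Vert^2_{\mathrm H^1(\Omega)}$ as a lower bound. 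On the right, the Lipschitz property of $D$ from \eqref{prop-D} bounds $|D(\b\sigma)-D(\widetilde{\b\sigma})|\le d_3|\b\sigma-\widetilde{\b\sigma}|$ pointwise, and pulling $\nabla\widetilde\omega$ out in $L^\infty$ lets me estimate the right-hand side by $d_3\Vert\b\sigma-\widetilde{\b\sigma}\Vert_{\mathbb L^2(\Omega)}\,\Vert\widetilde\omega\Vert_{W^{1,\infty}(\Omega)}\,\Vert\nabla(\omega-\widetilde\omega)\Vert_{\mathrm L^2(\Omega)}$. Dividing through by $\widetilde\alpha\Vert\omega-\widetilde\omega\Vert_{\mathrm H^1(\Omega)}$ produces the second inequality.

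The \textbf{main obstacle} is the $W^{1,\infty}(\Omega)$ norm on $\widetilde{\mathbf S}(\widetilde{\b\sigma})$ in the second estimate. To split off the factor $\bigl(D(\b\sigma)-D(\widetilde{\b\sigma})\bigr)\nabla\widetilde\omega$ and land an $\mathbb L^2$ norm on the stress difference, I must hold $\nabla\widetilde\omega$ in $L^\infty$, so the argument tacitly presupposes the \emph{additional elliptic regularity} $\widetilde{\mathbf S}(\widetilde{\b\sigma})\in W^{1,\infty}(\Omega)$ rather than merely $\HG$. This is exactly the kind of regularity assumption flagged in the introduction as the price of the fixed-point decoupling; it does not follow from Lax--Milgram alone but requires extra hypotheses on the domain, the boundary data, and the smoothness of $D(\widetilde{\b\sigma})$. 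I would therefore state this estimate conditionally on that regularity and defer to the later fixed-point argument (the ball $\mathbf W$ in \eqref{ball_W}) where it is invoked.
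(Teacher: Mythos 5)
Your proposal is correct and follows essentially the same route as the paper: the first estimate is obtained by applying linearity and the $\lambda_s$-independent stability bound \eqref{stability_main} to the difference system with datum $H_{\omega_1-\omega_2}$ (the paper redoes the stability argument of Lemma~\ref{well-posed-poro}/\ref{lemma_main}, which is what invoking \eqref{stability_main} with $\widetilde{F}_1=\widetilde{F}=\widetilde{G}=0$ amounts to), and the second is obtained exactly as in the paper by subtracting the two weak formulations, testing with $\omega-\widetilde{\omega}$, and combining the ellipticity \eqref{ellipticity_A} with the Lipschitz bound in \eqref{prop-D} and an $L^\infty$ bound on $\nabla\widetilde{\omega}$. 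Your remark that the $W^{1,\infty}(\Omega)$ factor presupposes extra regularity, to be invoked only conditionally in the fixed-point/uniqueness argument, is precisely how the paper treats it (cf.\ the assumption \eqref{bound_M} in Theorem~\ref{theorem:solvability_continuous}).
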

\begin{proof}
	We begin with the proof for $\mathbf{S}$. Let $(\b t_1, \vec{\b \sigma}_1,\vec{\b u}_1,p_1)$ and $(\b t_2, \vec{\b \sigma}_2,\vec{\b u}_2,p_2)$ be two solutions of the problem \eqref{VF-poro_1}-\eqref{VF-poro_4}, with $\omega_1$ and $\omega_2 \in \mathrm{H}^1_\Gamma(\Omega)$ given, such that $\mathbf{S}(\omega_1):=(\b t_1, \vec{\b \sigma}_1,\vec{\b u}_1,p_1)$ and $\mathbf{S}(\omega_2):=(\b t_2, \vec{\b \sigma}_2,\vec{\b u}_2,p_2)$. Then, by applying the linearity of the involved bilinear forms and functionals, we obtain
	\begin{alignat*}{10}
	&A(\b t_1-\b t_2, \b r)&+\quad&B(\b r, \vec{\b \sigma}_1-\vec{\b \sigma}_2)&&&&&=& \;H_{\omega_1-\omega_2}(\b r),\\
	&B(\b t_1-\b t_2, \vec{\b \tau})\;\;\,&-\quad& C(\vec{\b \tau},\vec{\b \sigma}_1-\vec{\b \sigma}_2)\;\;\,&+&\;\; B_1(\vec{\b \tau},\vec{\b u}-\vec{\b u_2})\;\;&+&\;\;B_2(\vec{\b \tau},p_1-p_2)\;&=&\;0,\\
	& &&B_1(\vec{\b \sigma}_1-\vec{\b \sigma}_2,\vec{\b v})&&&&&=&\;0,\\
	&&&B_2(\vec{\b \sigma},q)&&&-&\;\;D(p,q)&=& \;0, 
	\end{alignat*}
	for each $(\b r, \vec{\b \tau}, \vec{\b v}, p)\in \mathbb{L}^2(\Omega)\times \mathbf{X}\times \mathbf{M} \times \mathrm{H}^1_{\Sigma}(\Omega)$.
	Proceeding as in the proof of Lemma \ref{well-posed-poro}, we deduce that 
	\begin{equation}\label{bound_LC_S}
	\Vert \mathbf{S}(\omega_1)-\mathbf{S}(\omega_2)\Vert=\Vert (\b t_1, \vec{\b \sigma}_1,\vec{\b u}_1,p_1)-(\b t_2, \vec{\b \sigma}_2,\vec{\b u}_2,p_2)\Vert  \leq C_{\mathbf{S}}  \beta\Vert\omega_1-\omega_2 \Vert_{\mathrm{L}^2(\Omega)}.
	\end{equation}
	In turn, for the problem defined by $\widetilde{\mathbf{S}}$, given $\b \sigma$ and $\widetilde{\b \sigma} \in \mathbb{H}_{\Sigma}(\mathbf{div},\Omega)$, we let $\omega$ and $\widetilde{\omega}$ be two solutions of problem \eqref{VF_diff}, such that $\omega:={\widetilde{\mathbf{S}}}(\b \sigma)$ and $\widetilde{\omega}:=\widetilde{\mathbf{S}}(\widetilde{\b \sigma})$. Then, adding and subtracting appropriate terms, and utilising the ellipticity of $\mathscr{A}_{\b \sigma}$, we find that
	\begin{align}
\label{LC_omega}
\nonumber	\widetilde{\alpha}\Vert \omega-\widetilde{\omega}\Vert^2_{\mathrm{H}^1(\Omega)}& \leq \mathscr{A}_{\b \sigma}(\omega,\omega-\widetilde{\omega})- \mathscr{A}_{\b \sigma}(\widetilde{\omega},\omega-\widetilde{\omega})=(\mathscr{A}_{\widetilde{\b \sigma}}-\mathscr{A}_{{\b \sigma}})(\widetilde{\omega},\omega-\widetilde{\omega})\\
	&=\int_{\Omega}(D(\widetilde{\b \sigma})-D({\b \sigma}))\nabla \widetilde{\omega}\cdot\nabla (\omega-\widetilde{\omega})\leq d_3\Vert\widetilde{\b \sigma}-\b \sigma\Vert_{\mathbb{L}^2(\Omega)}\Vert\widetilde{\omega}\Vert_{W^{1,\infty}(\Omega)}\Vert\omega-\widetilde{\omega}\Vert_{\mathrm{H}^1(\Omega)}.
	\end{align}   
	Thus, the proof ends after noticing that   from \eqref{LC_omega} we obtain 
	\begin{equation}\label{bound_TS}
	\Vert\widetilde{\mathbf{S}}(\b \sigma)-\widetilde{\mathbf{S}}(\widetilde{\b \sigma})\Vert_{\mathrm{H}^1(\Omega)}:=\Vert \omega-\widetilde{\omega}\Vert^2_{\mathrm{H}^1(\Omega)}\leq \frac{d_3}{\widetilde{\alpha}}\Vert\widetilde{\b \sigma}-\b \sigma\Vert_{\mathbb{L}^2(\Omega)}\Vert\widetilde{\mathbf{S}}(\widetilde{\b \sigma})\Vert_{W^{1,\infty}(\Omega)}.
	\end{equation}
\end{proof}

Now, we are able to show the announced property of the operator $\mathbf{T}$.
\begin{lemma}\label{lemma:LC_T}
	There exists a constant $C_{\mathbf{T}}>0$, independent of $\lambda_s$ such that 
	\begin{equation}\label{LC_T}
	\Vert\mathbf{T}(\omega_1)-\mathbf{T}(\omega_2)\Vert_{\mathrm{H}^1(\Omega)}\leq C_{\mathbf{T}}d_3\beta\Vert\mathbf{T}(\omega_2)\Vert_{W^{1,\infty}(\Omega)}\Vert\omega_1-\omega_2\Vert_{\mathrm{L}^2(\Omega)}  \quad \forall\, \omega_1, \omega_2\in \mathrm{H}^1_\Gamma(\Omega).
	\end{equation}
\end{lemma}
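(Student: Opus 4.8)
The plan is to exploit the fact that, by its very construction (cf. the definition of $\mathbf{T}$ in Section~\ref{section-a-fixed-point-strategy}), the operator $\mathbf{T}$ is the composition $\mathbf{T}=\widetilde{\mathbf{S}}\circ\mathbf{S}_2$, so that the desired estimate follows simply by chaining the two Lipschitz bounds already established in Lemma~\ref{Lipschitz_S_WS}. No new variational argument is needed here; the whole proof reduces to careful bookkeeping of which field is fed into which operator and of the norms involved.

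First I would fix $\omega_1,\omega_2\in\mathrm{H}^1_\Gamma(\Omega)$ and write $\mathbf{T}(\omega_i)=\widetilde{\mathbf{S}}(\mathbf{S}_2(\omega_i))$. Applying the second estimate of Lemma~\ref{Lipschitz_S_WS} with $\b\sigma:=\mathbf{S}_2(\omega_1)$ and $\widetilde{\b\sigma}:=\mathbf{S}_2(\omega_2)$, and recalling that $\widetilde{\mathbf{S}}(\mathbf{S}_2(\omega_2))=\mathbf{T}(\omega_2)$, I would obtain
\begin{equation*}
\Vert\mathbf{T}(\omega_1)-\mathbf{T}(\omega_2)\Vert_{\mathrm{H}^1(\Omega)}\leq \frac{d_3}{\widetilde{\alpha}}\,\Vert\mathbf{S}_2(\omega_1)-\mathbf{S}_2(\omega_2)\Vert_{\mathbb{L}^2(\Omega)}\,\Vert\mathbf{T}(\omega_2)\Vert_{W^{1,\infty}(\Omega)}.
\end{equation*}
Next I would dominate the stress increment in the $\mathbb{L}^2(\Omega)$-norm by the full product norm of $\mathbf{S}$, namely $\Vert\mathbf{S}_2(\omega_1)-\mathbf{S}_2(\omega_2)\Vert_{\mathbb{L}^2(\Omega)}\leq\Vert\mathbf{S}(\omega_1)-\mathbf{S}(\omega_2)\Vert$, since the stress component $\mathbf{S}_2$ enters that norm through $\Vert\cdot\Vert_{\mathbb{H}(\mathbf{div},\Omega)}\geq\Vert\cdot\Vert_{\mathbb{L}^2(\Omega)}$ within $\Vert\cdot\Vert_{\mathbf{X}}$. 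The first estimate of Lemma~\ref{Lipschitz_S_WS} then gives $\Vert\mathbf{S}(\omega_1)-\mathbf{S}(\omega_2)\Vert\leq C_{\mathbf{S}}\beta\Vert\omega_1-\omega_2\Vert_{\mathrm{L}^2(\Omega)}$. Substituting and setting $C_{\mathbf{T}}:=C_{\mathbf{S}}/\widetilde{\alpha}$ produces exactly \eqref{LC_T}.

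The step requiring the most care is the verification that $C_{\mathbf{T}}$ is independent of $\lambda_s$: this hinges on the $\lambda_s$-independence of $C_{\mathbf{S}}$ (inherited from Lemmas~\ref{well-posed-poro} and~\ref{lemma_main}) together with the fact that $\widetilde{\alpha}=\min\{D_0,\phi\}$ is a purely diffusive constant. Beyond this, there is no genuine obstacle, since the hard analytic content is packaged in Lemma~\ref{Lipschitz_S_WS}; the only conceptual subtlety — and the reason the factor $\Vert\mathbf{T}(\omega_2)\Vert_{W^{1,\infty}(\Omega)}$ survives in the bound rather than being absorbed into a constant — is the extra regularity requirement that $\mathbf{T}(\omega_2)=\widetilde{\mathbf{S}}(\mathbf{S}_2(\omega_2))$ belong to $W^{1,\infty}(\Omega)$. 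This is precisely the additional regularity assumption underpinning the fixed-point scheme, and it is carried through the statement rather than established here.
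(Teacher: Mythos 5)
Your proposal is correct and takes essentially the same route as the paper: the paper's proof of Lemma~\ref{lemma:LC_T} consists precisely of recalling $\mathbf{T}=\widetilde{\mathbf{S}}\circ\mathbf{S}_2$ and invoking Lemma~\ref{Lipschitz_S_WS}, which is exactly your chaining argument with the intermediate bookkeeping (the bound $\Vert\mathbf{S}_2(\omega_1)-\mathbf{S}_2(\omega_2)\Vert_{\mathbb{L}^2(\Omega)}\leq\Vert\mathbf{S}(\omega_1)-\mathbf{S}(\omega_2)\Vert$ and the identification $C_{\mathbf{T}}=C_{\mathbf{S}}/\widetilde{\alpha}$) made explicit.
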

\begin{proof}
It suffices to recall from Section \ref{section-a-fixed-point-strategy} that $\mathbf{T}(\omega)=\widetilde{\mathbf{S}}(\mathbf{S}_2(\omega))$  for all $\omega\in \mathrm{H}^1_\Gamma(\Omega)$, and  apply Lemma \ref{Lipschitz_S_WS}.
\end{proof}
The next lemma establishes the continuity and compactness of $\mathbf{T}$ on $\mathbf{W}$.
\begin{lemma}\label{theorem:compact}
Let $\mathbf{W}$ be as in   {\rm \eqref{ball_W}}. Then, $\mathbf T:\mathbf{W}\to \mathbf{W}$ is continuous and $\overline{\mathbf T(\mathbf{W})}$ is compact.
\end{lemma}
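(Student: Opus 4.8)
The plan is to treat the two assertions separately: continuity will follow from the Lipschitz estimate already in hand, while compactness will come from a factorisation of $\mathbf{T}$ through a Rellich embedding. First I would establish continuity starting from Lemma~\ref{lemma:LC_T}, which gives for all $\omega_1,\omega_2\in\mathbf{W}$ the bound
\begin{equation*}
\Vert\mathbf{T}(\omega_1)-\mathbf{T}(\omega_2)\Vert_{\mathrm{H}^1(\Omega)}\leq C_{\mathbf{T}}d_3\beta\Vert\mathbf{T}(\omega_2)\Vert_{W^{1,\infty}(\Omega)}\Vert\omega_1-\omega_2\Vert_{\mathrm{L}^2(\Omega)}.
\end{equation*}
Since $\mathbf{T}(\omega_2)\in\mathbf{W}$ by Lemma~\ref{bound_TW}, and invoking the standing additional regularity hypothesis on the diffusion problem (namely that $\widetilde{\mathbf{S}}(\bsigma)\in W^{1,\infty}(\Omega)$ with a bound depending only on the data, uniformly over the ball), there exists $C_{\mathrm{reg}}>0$ with $\sup_{\omega\in\mathbf{W}}\Vert\mathbf{T}(\omega)\Vert_{W^{1,\infty}(\Omega)}\leq C_{\mathrm{reg}}$. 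Combining this with the continuous embedding $\mathrm{H}^1_\Gamma(\Omega)\hookrightarrow\mathrm{L}^2(\Omega)$ reduces the estimate to $\Vert\mathbf{T}(\omega_1)-\mathbf{T}(\omega_2)\Vert_{\mathrm{H}^1(\Omega)}\leq C_{\mathbf{T}}d_3\beta\,C_{\mathrm{reg}}\,\Vert\omega_1-\omega_2\Vert_{\mathrm{H}^1(\Omega)}$, so $\mathbf{T}$ is Lipschitz, hence continuous, on $\mathbf{W}$.

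For the compactness of $\overline{\mathbf{T}(\mathbf{W})}$ the key observation is that $\omega$ enters the poroelastic subproblem \eqref{VF-poro_1}--\eqref{VF-poro_4} \emph{only} through the functional $H_\omega$ of \eqref{def-A-H}, and that $\omega\mapsto H_\omega$ is bounded with respect to the $\mathrm{L}^2(\Omega)$-norm of $\omega$, not merely its $\mathrm{H}^1(\Omega)$-norm. Together with the stability estimate of Lemma~\ref{lemma_main} (equivalently, the first inequality in Lemma~\ref{Lipschitz_S_WS}), this shows that $\mathbf{S}_2:\mathrm{H}^1_\Gamma(\Omega)\to\mathbb{H}_{\Sigma}(\mathbf{div},\Omega)$ factors as $\mathbf{S}_2=\widehat{\mathbf{S}}_2\circ\iota$, where $\iota:\mathrm{H}^1_\Gamma(\Omega)\hookrightarrow\hookrightarrow\mathrm{L}^2(\Omega)$ is the compact Rellich embedding and $\widehat{\mathbf{S}}_2:\mathrm{L}^2(\Omega)\to\mathbb{H}_{\Sigma}(\mathbf{div},\Omega)$ is Lipschitz continuous. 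Hence $\mathbf{S}_2$ is a compact operator. Since $\widetilde{\mathbf{S}}$ is continuous by Lemma~\ref{Lipschitz_S_WS} and $\mathbf{T}=\widetilde{\mathbf{S}}\circ\mathbf{S}_2$, the image of the bounded set $\mathbf{W}$ satisfies $\mathbf{T}(\mathbf{W})\subseteq\widetilde{\mathbf{S}}\bigl(\overline{\mathbf{S}_2(\mathbf{W})}\bigr)$, which is compact as the continuous image of a compact set; therefore $\overline{\mathbf{T}(\mathbf{W})}$ is compact.

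The hard part will be securing the uniform constant $C_{\mathrm{reg}}$, i.e.\ justifying the $W^{1,\infty}$ (or $W^{1,p}$ with $p>d$) regularity of the solutions of \eqref{VF_diff} uniformly over $\mathbf{W}$. This requires combining the a priori bound $\mathbf{W}\subseteq\{\Vert\omega\Vert_{\mathrm{H}^1(\Omega)}\leq r\}$, the stability of $\mathbf{S}_2$ to keep $\bsigma=\mathbf{S}_2(\omega)$ in a fixed bounded set, and the structural properties of $D$ in \eqref{prop-D} ($C^1$, uniformly elliptic, bounded and Lipschitz) so that the coefficient $D(\bsigma)$ of the scalar elliptic problem is uniformly elliptic and bounded, whence a maximal regularity estimate applies with constants independent of $\omega\in\mathbf{W}$. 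This is exactly the extra regularity assumption flagged in the introduction, and it is what prevents the continuity argument from closing in the pure $\mathrm{H}^1$ setting; the compactness, by contrast, relies only on the $\mathrm{L}^2$-dependence of $H_\omega$ and is unconditional.
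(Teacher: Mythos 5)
Your proof is correct, but the compactness half takes a genuinely different route from the paper's. For continuity you argue exactly as the paper does: combine the Lipschitz estimate \eqref{LC_T} with the continuity of the embedding $\mathrm{H}^1(\Omega)\hookrightarrow\mathrm{L}^2(\Omega)$ (the paper is content to cite these two facts, leaving the uniform $W^{1,\infty}$ bound implicit, whereas you make the constant $C_{\mathrm{reg}}$ explicit --- a fair reading of the regularity hypothesis the authors flag in the introduction). For compactness, however, the paper runs a sequential argument directly on $\mathbf{T}$: a sequence in $\mathbf{W}$ has a weakly convergent subsequence in $\mathrm{H}^1$, the compact embedding $i_c$ upgrades this to strong $\mathrm{L}^2$ convergence, and \eqref{LC_T} then makes $\mathbf{T}(\omega_{n_k})$ Cauchy in $\mathrm{H}^1$. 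You instead isolate the compactness in the poroelastic solution map: since $\omega$ enters \eqref{VF-poro_1}--\eqref{VF-poro_4} only through $H_\omega$, and the first estimate of Lemma \ref{Lipschitz_S_WS} is Lipschitz in the $\mathrm{L}^2$-norm, $\mathbf{S}_2$ factors through the Rellich embedding and is therefore a compact operator; composing with the continuous $\widetilde{\mathbf{S}}$ then maps the bounded set $\mathbf{W}$ into a relatively compact set. This factorization is cleaner structurally --- it pinpoints that the compactness mechanism lives entirely in $\mathbf{S}_2$ and is the same device the paper itself uses at the operator level in Lemma \ref{F-compact}. One caveat: your closing claim that the compactness is ``unconditional'' overstates the case. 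Your own argument still invokes the continuity of $\widetilde{\mathbf{S}}$ via the second estimate of Lemma \ref{Lipschitz_S_WS}, whose constant carries the factor $\Vert\widetilde{\mathbf{S}}(\widetilde{\bsigma})\Vert_{W^{1,\infty}(\Omega)}$, so the regularity hypothesis has not actually been removed from the compactness conclusion --- only the compactness \emph{mechanism} (the factorization of $\mathbf{S}_2$) is independent of it. Removing the hypothesis entirely would require a separate continuity proof for $\widetilde{\mathbf{S}}$, e.g.\ by a weak-convergence argument on the coefficient $D(\bsigma_n)$, which you do not supply.
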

\begin{proof}
	The continuity of $i_c: \mathrm{H}^1(\Omega)\rightarrow \mathrm{L}^2(\Omega)$ in combination with the estimate \eqref{LC_T}, give the continuity of the operator $\mathbf{T}$. Finally, thanks to the compactness of $i_c$ and the fact that every bounded sequence in a Hilbert space has a weakly convergent subsequence, we can assert the compactness of $\overline{\mathbf{T}(\mathbf{W})}$, concluding the proof.
\end{proof}

We can now establish the existence of a solution to \eqref{VF-poro_1}-\eqref{VF_diff}, which follows straightforwardly from 
Schauder's theorem together with Lemmas \ref{lemma:LC_T} and \ref{theorem:compact}. 
\begin{theorem}
Let $\mathbf{W}$ be as in \eqref{ball_W}. Then, there exists $\overline{C}>0$, independent of $\lambda_s$, such that   $\mathrm{(\ref{VF-poro_1})-\eqref{VF_diff}}$ has at least one solution $(\b t, \vec{\b \sigma},\vec{\b u},p, \omega)\in \mathbb{L}^2(\Omega)\times\mathbf{X}\times\mathbf{M}\times\mathrm{H}^1_{\Sigma}(\Omega)\times \mathrm{H}^1_{\Gamma}(\Omega)$ with $\omega\in \mathbf{W}$, satisfying   
\begin{equation}\label{bound-sigma-u-rho-T}
\Vert \b t\Vert_{\mathbb{L}^2(\Omega)}+\Vert \vec{\b \sigma}\Vert_{\mathbf{X}}+\Vert \vec{\b u}\Vert_{\mathbf{M}}+\Vert p\Vert_{\mathrm{H}^1(\Omega)}+\Vert \omega\Vert_{\mathrm{H}^1(\Omega)}\leq  \overline{C}\left(\Vert F\Vert_{\mathbf{M}'}+\Vert G\Vert_{\mathrm{H}^1_{\Sigma}(\Omega)'}+\Vert J\Vert_{\mathrm{H}^1_\Gamma(\Omega)'}\right).
\end{equation}
\end{theorem}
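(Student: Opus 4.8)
The plan is to obtain existence by applying Schauder's fixed-point theorem to $\mathbf{T}$ on the ball $\mathbf{W}$, and then to convert the ball estimate together with the poroelastic stability bound of Lemma~\ref{lemma_main} into the claimed a priori estimate \eqref{bound-sigma-u-rho-T}. First I would check the hypotheses of Schauder's theorem: the set $\mathbf{W}$ defined in \eqref{ball_W} is nonempty, closed, bounded and convex in the Hilbert space $\mathrm{H}^1_\Gamma(\Omega)$ (convexity is immediate since $\mathbf{W}$ is a ball centred at the origin); Lemma~\ref{bound_TW} gives $\mathbf{T}(\mathbf{W})\subseteq\mathbf{W}$; and Lemma~\ref{theorem:compact} gives that $\mathbf{T}$ is continuous on $\mathbf{W}$ with $\overline{\mathbf{T}(\mathbf{W})}$ compact. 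These are exactly the assumptions of Schauder's theorem, which then yields at least one $\omega\in\mathbf{W}$ with $\mathbf{T}(\omega)=\omega$.

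Next I would unpack the fixed point into a genuine solution of the coupled system. Recalling that $\mathbf{T}=\widetilde{\mathbf{S}}\circ\mathbf{S}_2$, the identity $\mathbf{T}(\omega)=\omega$ reads $\omega=\widetilde{\mathbf{S}}(\mathbf{S}_2(\omega))$. Setting $(\b t,\vec{\b \sigma},\vec{\b u},p):=\mathbf{S}(\omega)$, the definition of $\mathbf{S}$ via \eqref{VF-poro_1}--\eqref{VF-poro_4} (with the generic functionals instantiated as $\widetilde{H}_\omega=H_\omega$, $\widetilde{F}_1=\mathrm{O}$, $\widetilde{F}=F$, $\widetilde{G}=G$) makes the poroelastic block hold, while the definition of $\widetilde{\mathbf{S}}$ makes $\omega$ solve \eqref{VF_diff} with the data $\b\sigma=\mathbf{S}_2(\omega)$ entering $\mathscr{A}_{\b\sigma}$. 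Hence the tuple $(\b t,\vec{\b \sigma},\vec{\b u},p,\omega)$ solves \eqref{VF-poro_1}--\eqref{VF_diff}, with $\omega\in\mathbf{W}$, settling existence.

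Finally I would close the estimate. From $\omega\in\mathbf{W}$ (equivalently from \eqref{stabilization-diff_lemma}) we have $\Vert\omega\Vert_{\mathrm{H}^1(\Omega)}\le\widetilde{\alpha}^{-1}\Vert J\Vert_{\mathrm{H}^1_\Gamma(\Omega)'}$. Applying Lemma~\ref{lemma_main} with the functionals identified as above bounds the poroelastic norms by $\Vert H_\omega\Vert_{\mathbb{L}^2(\Omega)'}$, $\Vert F\Vert_{\mathbf{M}'}$ and $\Vert G\Vert_{\mathrm{H}^1_\Sigma(\Omega)'}$; using the elementary estimate $\Vert H_\omega\Vert_{\mathbb{L}^2(\Omega)'}\le\beta\sqrt{d}\,\Vert\omega\Vert_{\mathrm{L}^2(\Omega)}$ (from $|\tr(\b r)|\le\sqrt{d}\,|\b r|$ in \eqref{def-A-H}) and then substituting the ball bound for $\Vert\omega\Vert$, I would collect the diffusion estimate and the poroelastic estimate into \eqref{bound-sigma-u-rho-T}.

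Since existence is almost immediate once the preparatory lemmas are in hand, the only delicate point, and the hard part to watch, is the bookkeeping in the final estimate: one must correctly match the generic functionals $\widetilde{H}_\omega,\widetilde{F}_1,\widetilde{F},\widetilde{G}$ of Lemma~\ref{lemma_main} with the actual data of the coupled problem, and verify that reinserting the ball bound for $\omega$ (which is controlled only by $\Vert J\Vert$ through $\widetilde{\alpha}^{-1}$) does not reintroduce any dependence on $\lambda_s$. This robustness is inherited from the $\lambda_s$-uniform constants already established in Lemma~\ref{lemma_main} and Lemma~\ref{well-posed-dif}, so the resulting constant $\overline{C}$ is itself independent of $\lambda_s$.
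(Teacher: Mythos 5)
Your proposal is correct and follows essentially the same route as the paper: Schauder's fixed-point theorem on the ball $\mathbf{W}$ (via Lemmas~\ref{bound_TW} and \ref{theorem:compact}) for existence, followed by the a priori estimate obtained from Lemma~\ref{lemma_main} with $\widetilde{H}_\omega=H_\omega$, $\widetilde{F}_1=\mathrm{O}$, $\widetilde{F}=F$, $\widetilde{G}=G$, the bound $\Vert H_\omega\Vert_{\mathbb{L}^2(\Omega)'}\le\beta\sqrt{d}\,\Vert\omega\Vert_{\mathrm{L}^2(\Omega)}$, and the ball bound $\Vert\omega\Vert_{\mathrm{H}^1(\Omega)}\le\widetilde{\alpha}^{-1}\Vert J\Vert_{\mathrm{H}^1_\Gamma(\Omega)'}$. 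Your explicit unpacking of the fixed point into a solution of the coupled system and your attention to the $\lambda_s$-independence of the constants are exactly the (largely implicit) steps of the paper's argument.
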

\begin{proof}
It only remains to prove   \eqref{bound-sigma-u-rho-T}. For that, it is suffices to apply the result \eqref{stability_main} with $\widetilde{H}_\omega=H_\omega, \widetilde{F}_1=\mathrm{O}, \widetilde{F}=F$ and $\widetilde{G}=G$, the estimate \eqref{stabilization-diff_lemma}, the corresponding bound for $\Vert H_\omega\Vert_{\mathbb{L}^2(\Omega)'}$ and the fact that $\omega\in \mathbf{W}$. 
\end{proof}
Finally, we establish the uniqueness of solution to   \eqref{VF-poro_1}-\eqref{VF_diff} based on an additional smallness assumption.
\begin{theorem}\label{theorem:solvability_continuous}
Let $(\b t, \vec{\b \sigma},\vec{\b u},p, \omega)\in \mathbb{L}^2(\Omega)\times\mathbf{X}\times\mathbf{M}\times\mathrm{H}^1_{\Sigma}(\Omega)\times \mathrm{H}^1_{\Gamma}(\Omega)\cap W^{1,\infty}(\Omega)$ be  a  solution  to  problem \eqref{VF-poro_1}-\eqref{VF_diff},  and assume that there exists $M >0$, such that
\begin{equation}\label{bound_M}
\Vert\omega\Vert_{W^{1,\infty}(\Omega)}\leq M<\frac{\widetilde{\alpha}}{d_3\beta C_{\mathbf{S}}}.
\end{equation}
Then, the solution of problem \eqref{VF-poro_1}-\eqref{VF_diff} is unique.
\end{theorem}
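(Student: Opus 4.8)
The plan is to exploit the equivalence, already established in Section~\ref{section-a-fixed-point-strategy}, between solvability of \eqref{VF-poro_1}-\eqref{VF_diff} and the fixed-point problem $\mathbf{T}(\omega)=\omega$, and then to run a Banach-type absorption argument on the slice carrying the smallness bound. First I would let $\omega$ denote the concentration component of the solution postulated in the hypothesis, so that $\Vert\omega\Vert_{W^{1,\infty}(\Omega)}\le M$, and let $\widetilde\omega$ be the concentration component of any other solution of \eqref{VF-poro_1}-\eqref{VF_diff}. Both are fixed points, i.e.\ $\omega=\mathbf{T}(\omega)$ and $\widetilde\omega=\mathbf{T}(\widetilde\omega)$, whence $\Vert\widetilde\omega-\omega\Vert_{\mathrm{H}^1(\Omega)}=\Vert\mathbf{T}(\widetilde\omega)-\mathbf{T}(\omega)\Vert_{\mathrm{H}^1(\Omega)}$.

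Next I would apply the Lipschitz estimate \eqref{LC_T} of Lemma~\ref{lemma:LC_T} with $\omega_1=\widetilde\omega$ and $\omega_2=\omega$, the decisive point being to place the solution that carries the $W^{1,\infty}$ bound in the second slot. Since $\mathbf{T}(\omega)=\omega$, the factor $\Vert\mathbf{T}(\omega)\Vert_{W^{1,\infty}(\Omega)}$ equals $\Vert\omega\Vert_{W^{1,\infty}(\Omega)}\le M$, so that
\begin{equation*}
\Vert\widetilde\omega-\omega\Vert_{\mathrm{H}^1(\Omega)}\le C_{\mathbf{T}}\,d_3\,\beta\,M\,\Vert\widetilde\omega-\omega\Vert_{\mathrm{L}^2(\Omega)}.
\end{equation*}
Recalling that $\mathbf{T}=\widetilde{\mathbf{S}}\circ\mathbf{S}_2$ and chaining the two bounds of Lemma~\ref{Lipschitz_S_WS} identifies $C_{\mathbf{T}}=C_{\mathbf{S}}/\widetilde{\alpha}$, so the constant above reads $\tfrac{d_3\beta C_{\mathbf{S}}}{\widetilde{\alpha}}\,M$, which is strictly less than $1$ precisely by the smallness hypothesis \eqref{bound_M}.

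I would then bound $\Vert\widetilde\omega-\omega\Vert_{\mathrm{L}^2(\Omega)}\le\Vert\widetilde\omega-\omega\Vert_{\mathrm{H}^1(\Omega)}$ through the continuous embedding $\mathrm{H}^1(\Omega)\hookrightarrow\mathrm{L}^2(\Omega)$, obtaining a self-bounding inequality whose coefficient lies strictly below $1$; this forces $\Vert\widetilde\omega-\omega\Vert_{\mathrm{H}^1(\Omega)}=0$, i.e.\ $\widetilde\omega=\omega$. Once the concentrations coincide, the remaining fields $(\b t,\vec{\b \sigma},\vec{\b u},p)$ are determined as $\mathbf{S}(\omega)$, and the uniqueness assertion of Lemma~\ref{lemma_main} for the $\mathbf{S}$-problem with $\omega$ held fixed guarantees they coincide as well, which completes the argument.

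The step I expect to require the most care is the bookkeeping of constants, namely verifying that the effective contraction factor produced by the composition $\widetilde{\mathbf{S}}\circ\mathbf{S}_2$ is exactly $\tfrac{d_3\beta C_{\mathbf{S}}}{\widetilde{\alpha}}$ so that it matches the threshold in \eqref{bound_M}, together with the discipline of inserting the solution endowed with the $W^{1,\infty}$ regularity and smallness into the $W^{1,\infty}$-slot of \eqref{LC_T} rather than the other one. The remainder is the routine absorption and the appeal to the uniqueness already proved for $\mathbf{S}$.
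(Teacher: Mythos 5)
Your proposal is correct and follows essentially the same route as the paper: the paper's proof invokes the Lipschitz bounds \eqref{bound_LC_S} and \eqref{bound_TS} of Lemma \ref{Lipschitz_S_WS} (which you use in their composed form, Lemma \ref{lemma:LC_T}, with the correct identification $C_{\mathbf{T}}=C_{\mathbf{S}}/\widetilde{\alpha}$) together with the fixed-point property and the smallness assumption \eqref{bound_M} to obtain a contraction factor strictly below one. Your explicit care about placing the $W^{1,\infty}$-bounded solution in the second slot of \eqref{LC_T}, and the final appeal to Lemma \ref{lemma_main} to propagate uniqueness to the remaining fields, are exactly the steps the paper's terse proof leaves implicit.
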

\begin{proof}
	It follows the same steps as in the proof of Lemma \ref{Lipschitz_S_WS} to obtain the bounds \eqref{bound_LC_S} and \eqref{bound_TS}, and therefore, the result is merely an application of the assumption \eqref{bound_M}. 
\end{proof}

\section{Finite element method and solvability of the discrete problem} \label{sec:fem}
In this section we introduce and analyse the Galerkin scheme associated with   (\ref{VF-poro_1})-\eqref{VF_diff}.
We consider generic finite dimensional subspaces 
\begin{gather}
\mathbb{H}^{\b t}_{h}\subseteq \mathbb{L}^{2}(\Omega),\quad\mathbb{H}^{\b \sigma}_{h}\subseteq \mathbb{H}_{\Sigma}(\mathbf{div},\Omega),\quad \mathrm{H}^{\widetilde{p}}_h(\Omega)\subseteq \mathrm{L}^2(\Omega),\quad  \mathbf{H}^{\bu}_{h}\subseteq \mathbf{L}^2(\Omega), \nonumber
\\
\label{arbitrary_FEM}
\mathbb{H}^{\gamma}_h(\Omega)\subseteq \mathbb{L}^2_{\mathrm{skew}}(\Omega),\quad \mathrm{H}^p_h \subseteq \mathrm{H}^1_{\Sigma}(\Omega),\quad\mathrm{and}\quad \mathrm{H}^{\omega}_h\subseteq\mathrm{H}^{1}_{\Gamma}(\Omega),
\end{gather}
which will be specified later on. Then, we define 
\begin{gather*}
\vec{\b \sigma}_h:=(\b \sigma_h, \widetilde{p}_h),\quad \vec{\b \tau}_h:=(\b \tau_h, \widetilde{q}_h),\quad \vec{\b u}_h:=(\b u_h, \b \gamma_h), \quad \vec{\b v}_h:=(\b v_h, \b \eta_h),  \quad 
\mathbf{X}_h:= \mathbb{H}_{h}^{\b \sigma}\times \mathrm{H}^{\widetilde{p}}_h,\quad \mathbf{M}_h:=\mathbf{H}^{\b u}_h\times \mathbb{H}^{\gamma}_h,\end{gather*}
and a Galerkin scheme for (\ref{VF-poro_1})-\eqref{VF_diff} reads: Find $(\b t_h, \vec{\b \sigma},\vec{\b u}_h,$
 $p_h, \omega_h)\in \mathbb{H}^{\b t}_h\times\mathbf{X}_h\times\mathbf{M}_h\times\mathrm{H}^{p}_h\times \mathrm{H}^{\omega}_h$ such that 
\begin{subequations}
	\begin{alignat}{11}
	&[A(\b t_h), \b r_h]&\,+\,&[B^*( \vec{\b \sigma}_h),\b r_h]&&&&&\;=\;& [H_{\omega_h},\b r_h]&\quad \forall\, \b r_h\in \mathbb{H}^{\b t}_h,	\label{VFh-poro_1}\\
	&[B(\b t_h), \vec{\b \tau}_h]&\,-\,& [C(\vec{\b \sigma}_h),\vec{\b \tau}_h]&\,+\,&[B_1^*(\vec{\b u}_h),\vec{\b \tau}_h]&\,+\,&[B_2^*(p_h),\vec{\b \tau}_h]&\;=\;&0& \forall\, \vec{\b \tau}_h \in \mathbf{X}_h,\label{VFh-poro_2}\\
	& &&[B_1(\vec{\b \sigma}_h),\vec{\b v}_h]&&&&&\;=\;&[F,\vec{\b v}_h]& \forall\, \vec{\b v}_h \in \mathbf{M}_h,\label{VFh-poro_3}\\
	&&&[B_2(\vec{\b \sigma}_h),q_h]&&&\,-\,&[D(p_h),q_h]&\;=\;& [G,q_h]&\forall\, p_h \in \mathrm{H}^{p}_h,\label{VFh-poro_4}\\
	&&&&&&&[\mathscr{A}_{\b \sigma_h}(\omega_h),\theta_h]&\;=\;& [J,\theta_h]&\forall\, \theta_h \in \mathrm{H}^{\omega}_h,\label{VFh_diff}
	\end{alignat}\end{subequations}
In order to address the well-posedness of (\ref{VFh-poro_1})-\eqref{VFh_diff}, we use again a fixed-point strategy. 
Let us define $\textbf{S}_h:\mathrm{H}_h^{\omega}\to \mathbb{H}^{\b t}_h\times\mathbf{X}_h\times\mathbf{M}_h\times\mathrm{H}^{p}_h$ as
\begin{align*}
	\textbf{S}_h(\omega_h) & := (\textbf{S}_{1,h}(\omega_h), (\textbf{S}_{2,h}(\omega_h),\textbf{S}_{3,h}(\omega_h)),(\textbf{S}_{4,h}(\omega_h),\textbf{S}_{5,h}(\omega_h)),\textbf{S}_{6,h}(\omega_h))\\
	& \;=(\b t_h, (\b \sigma_h, \widetilde{p}_h),(\b u_h,\b \gamma_h),p_h)\in \mathbb{H}^{\b t}_h\times\mathbf{X}_h\times\mathbf{M}_h\times\mathrm{H}^{p}_h,\end{align*}
where $(\b t_h, (\b \sigma_h, \widetilde{p}_h),(\b u_h,\b \gamma_h),p_h)$, is the unique solution of (\ref{VFh-poro_1})-\eqref{VFh-poro_4} with $\omega_h$ given. In turn, let ${\widetilde{\mathbf S}}_h : \mathbb{H}_h^{\b \sigma}
\to \mathrm{H}^{\omega}_h$ be the operator defined by 
\[
{\widetilde{\mathbf S}}_h(\b \sigma_h) \,:= \, \omega_h 
\quad\forall\,\b \sigma_h\,\in\,  \mathbb{H}_h^{\b \sigma},\]
where $\omega_h$ is the unique solution of (\ref{VFh_diff}) with $\b \sigma_h$ given. Finally, by introducing the operator $\mbox{\textbf{{T}}}_h:\mathrm{H}^{\omega}_h\to \mathrm{H}^{\omega}_h$ as
\[
\mbox{\textbf{{T}}}_h(\omega_h):=\widetilde{\mathbf S}_h(\mathbf S_{2,h}(\omega_h))\quad\forall\,\omega_h \in \mathrm{H}^{\omega}_h, 
\]
we see that solving (\ref{VFh-poro_1})-\eqref{VFh_diff} is equivalent to seeking a fixed point of $\mbox{\textbf{{T}}}_h$, that is: find $\omega_h\in \mathrm{H}^{\omega}_h$ such that 
\begin{equation}\label{def-Th}
	\mbox{\textbf{{T}}}_h(\omega_h)=\omega_h. 
\end{equation}
\subsection{Well-definedness of the operator $\mathbf{T}_h$}\label{section:notation}
Here we establish the solvability of \eqref{VFh-poro_1}-\eqref{VFh_diff} by studying the equivalent fixed-point problem \eqref{def-Th}. We begin by introducing some needed notations and preliminary results, as well as specific finite element subspaces satisfying \eqref{arbitrary_FEM}.

Let us denote by $\mathcal{T}_h$ a regular partition of $\overline{\Omega}$ into triangles (or tetrahedra in 3D) $K$ of diameter $h_K$, where 
$h:=\max\left\{h_K:\ K\in \mathcal{T}_h \right\}$ is the meshsize. Given an integer $k\geq0,$ for each $K\in \mathcal{T}_h$ we let $\mathrm{P}_k(K)$ be the space of polynomial functions on $K$ of degree $\leq k$ and define the local Raviart-Thomas space of order $k$ as 
$\mathbf{RT}_k(K):=\mathbf{P}_k(K)\oplus \mathrm{P}_k(K)\,\b x$, 
where $\mathbf{P}_k(K)=[\mathrm{P}_k(K)]^d,$ and $\b x$ is a generic vector in $\mathbb{R}^d$. Now, let $b_K$ be the element bubble function defined as the unique polynomial in $\mathrm{P}_{d+1}(K)$ vanishing on $\partial K$ with $\int_{K}b_K=1$. Then, for each $K\in \mathcal{T}_h$ we consider the bubble space of order $k$, defined as 
\[\mathbf{B}_k(K):=
 \begin{cases}
\mathbf{curl}^{\mathrm{t}}(b_K\mathrm{P}_k(K))&\textrm{in}\quad\mathbb{R}^2,\\
\nabla \times (b_K\mathbf{P}_k(K))&\textrm{in} \quad \mathbb{R}^3.
\end{cases}.\]
On the other hand, we observe thanks to Lemma \ref{F-invert},
that the ellipticity of $A:\mathbb{H}^{\b t}_{h}\times \mathbb{H}^{\b t}_{h}\rightarrow \mathbb{R}$ is satisfied for any finite dimensional
subspace $\mathbb{H}^{\b t}_{h}$, and with the same constant from \eqref{ellipticity_TSP}. Therefore $\mathbb{H}^{\b t}_{h}$ is chosen such that the discrete inf-sup condition for   $B$ holds. Moreover, thanks to the discrete analogue of $B_1$ (cf. first equation in \eqref{def-A-B}), an inf-sup condition  can be ensured by using the classical PEERS$_{k}$ elements introduced in \cite{arnold84}, that is 
\begin{align}\label{peers}
\mathbb{H}^{\b \sigma}_{h}&:=\left\{{\b \tau}_h \in \mathbb{H}_{\Sigma}(\mathbf{div},\Omega):\quad {\b \tau}_{h}|_{K}\in [\mathbf{RT}_k(K)]^d\oplus[\mathbf{B}_k(K)]^d\quad \forall\, K\in \mathcal{T}_h\right\},\nonumber \\
\mathbf{H}^{\b u}_{h}&:= \left\{\b v_h \in \L2: \quad \b v_h|_{K}\in \textbf{P}_{k}(K)\quad \forall\, K\in \mathcal{T}_h\right\},\\
\mathbb{H}^{\b \gamma}_{h}&:=\left\{\b \eta_h\in \La:\quad \b \eta_h\in \mathbf{C}(\overline{\Omega})\quad \mathrm{and}\quad \b \eta_h|_{K}\in \mathbb{P}_{k+1}(K)\quad \forall\, K\in \mathcal{T}_h\right\},\nonumber
\end{align}
or by employing the well-known Arnold-Falk-Winther (AFW$_k$, \cite{afw-2007}) family of order $k\geq 0$, that is
\begin{align}\label{discrete-spaces-elasticity-AFW}
\mathbb{H}^{\b \sigma}_{h}&:=\left\{{\b \tau}_h \in \mathbb{H}_{\Sigma}(\mathbf{div},\Omega):\  {\b \tau}_{h}|_{K}\in \mathbf{BDM}_{k+1}(K)\quad \forall\, K\in \mathcal{T}_h\right\}, \\
\mathbf{H}^{\b u}_{h}&:= \left\{\b v_h \in \L2: \  \b v_h|_{K}\in \textbf{P}_{k}(K)  \quad \forall\, K\in \mathcal{T}_h\right\},\ 
\mathbb{H}^{\b \rho}_{h}:=\left\{\b \eta_h\in \La:\  \b \eta_h|_{K}\in \mathbb{P}_{k}(K) \quad \forall\, K\in \mathcal{T}_h\right\}.
\nonumber
\end{align}
Also, we notice that the kernel of ${B}_1$ is given by
\begin{equation}\label{kernel_B}
\mathbb{V}_h:=\left\{\vec{\b \tau}_h\in \mathbf{X}_h:\;\; [B_1(\vec{\b \tau}_h),\vec{\b u_h}]=0\quad \forall\,\vec{\b u}_h\in \mathbf{M}_h\right\}.
\end{equation}
Then, by using the definition given for $\mathbb{H}^{\b \sigma}_{h}$, $\mathbf{H}^{\b u}_{h},$ and $\mathbb{H}^{\b \gamma}_{h}$, \eqref{kernel_B} becomes 
$\mathbb{V}_h:={\mathbb{V}}^{\b \sigma}_h\times \mathrm{H}^p_h$, 
where 
\begin{equation}\label{kernel_sigma}
{\mathbb{V}}^{\b \sigma}_h:=\left\{{\b \tau}_h\in \mathbb{H}^{\b \sigma}_h:\;\; \mathbf{div}\,\b \tau_h=0\quad \mathrm{in}\quad \Omega\quad \mathrm{and}\quad \I\b \eta_h\colon\b \tau_h=0\quad \b \forall\,\b \eta_h\in \mathbb{H}^{\b \rho}_h\right\},
\end{equation}
and therefore, if we use the finite elements \eqref{peers}, we can proceed as in \cite[Section 2.4]{gatica13}, to extend the results given in \cite{gatica06} to the case $k\geq 1$, and define an appropriate space for $\mathbb{H}^{\b t}_{h}$ as 
\begin{align}\label{FEM_T}
\mathbb{H}^{\b t}_{h}&:=\left\{{\b r}_h \in \mathbb{L}^{2}(\Omega):\;\; {\b r}_{h}|_{K}\in \mathbb{P}_k(K)\oplus[\mathbf{B}_k(K)]^d\oplus([\mathbf{B}_k(K)]^d)^{\mathrm{d}}\quad \forall\, K\in \mathcal{T}_h\right\}, 
\end{align}
where $([\mathbf{B}_k(K)]^d)^{\mathrm{d}}$ stands for the deviatoric tensor of $[\mathbf{B}_k(K)]^d$. In turn, if   \eqref{discrete-spaces-elasticity-AFW} is employed, we simply take the part of the AFW$_k$ element that approximates $\b \sigma$ without requiring $\mathbb{H}(\mathbf{div},\Omega)$-
conformity, that is
\begin{align}\label{FEM_T1}
\mathbb{H}^{\b t}_{h}&:=\left\{{\b r}_h \in \mathbb{L}^{2}(\Omega):\;\; {\b r}_{h}|_{K}\in \mathbf{BDM}_{k+1}(K)\quad \forall\, K\in \mathcal{T}_h\right\}.
\end{align}
Additionally, note that $\mathrm{H}^{\widetilde{p}}_{h}$ does not require any specific condition. It is therefore  simply chosen as 
\begin{equation}\label{FEM_PT1}
\mathrm{H}^{\widetilde{p}}_{h}:= \left\{\widetilde{q}_h \in \mathrm{L}^2(\Omega): \;\; \widetilde{q}_h|_{K}\in \mathrm{P}_{k}(K)\quad \forall\, K\in \mathcal{T}_h\right\}.
\end{equation}
Finally,  for  pressure and   solute concentration we consider Lagrange finite elements of degree $\leq k+1$, namely
\begin{subequations}
\begin{equation}\label{FE_pressure}
\mathrm{H}^{p}_{h}:= \left\{ q_h \in \mathrm{C}(\overline{\Omega})\,\cap\,\mathrm{H}^1_\Sigma(\Omega)\quad  q_h|_{K}\in \textrm{P}_{k+1}(K)\;\; \quad \forall\, K\in \mathcal{T}_h\right\},
\end{equation}
\begin{equation}\label{FE_concentration}
\mathrm{H}^{\omega}_{h}:= \left\{ \theta_h \in \mathrm{C}(\overline{\Omega})\,\cap\,\mathrm{H}^1_{\Gamma}(\Omega)\quad  \theta_h|_{K}\in \textrm{P}_{k+1}(K)\;\; \quad \forall\, K\in \mathcal{T}_h\right\}.
\end{equation}\end{subequations}

We now  require some preliminary results to establish the well-posedness of the operator $\mathbf{S}_h$.
\begin{lemma}
		There exists $\hat{\beta}_{1_d}>0$, independent of $h$, such that 
		\begin{equation}\label{inf_sup_B_h}\quad \sup_{\vec{\b \tau}_h\in \mathbf{X}_h \setminus\{\b 0\}} \frac{ [B_1(\vec{\b \tau}_h),\vec{\b v}_h]}{\norm{\vec{\b \tau}_h}_{\mathbf{X}}}\geq \hat{\beta}_{1_d}\norm{\vec{\b v}_h}_{\mathbf{M}}\quad \forall\, \vec{\b v}_h\in  \mathbf{M}_{h}.
		\end{equation}
\end{lemma}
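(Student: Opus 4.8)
The plan is to observe first that the form $[B_1(\vec{\b \tau}_h),\vec{\b v}_h]$ depends on $\vec{\b \tau}_h=(\b \tau_h,\widetilde{q}_h)$ only through its stress component $\b \tau_h$. Hence choosing $\widetilde{q}_h=\cero$ in the supremum only decreases the denominator $\norm{\vec{\b \tau}_h}_{\mathbf{X}}$, so \eqref{inf_sup_B_h} reduces to the classical inf-sup condition for the mixed elasticity system with weakly imposed symmetry: it suffices to produce, for every $(\b v_h,\b \eta_h)\in \mathbf{H}^{\b u}_h\times\mathbb{H}^{\gamma}_h$, a stress $\b \tau_h\in\mathbb{H}^{\b \sigma}_h$ with
\[
-\int_{\Omega} \b v_h\cdot\bdiv\,\b \tau_h-\int_{\Omega}\b \eta_h\colon\b \tau_h\;\geq\;\hat{\beta}_{1_d}\,\bigl(\norm{\b v_h}_{\L2}+\norm{\b \eta_h}_{\La}\bigr)\,\norm{\b \tau_h}_{\Hdiv},
\]
with $\hat{\beta}_{1_d}$ independent of $h$. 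This is precisely the stability property for which the PEERS family \eqref{peers} and the AFW family \eqref{discrete-spaces-elasticity-AFW} were designed, so the cleanest route is to invoke a Fortin operator and transfer the continuous surjectivity already recorded in Lemma \ref{lem_inf_sup_B1}.

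Concretely, I would construct a Fortin operator $\Pi_h:\HNdiv\to\mathbb{H}^{\b \sigma}_h$, defined first on a dense subspace of sufficiently regular tensors and then extended by the usual quasi-interpolation/density argument, satisfying the two moment conditions
\[
\int_{\Omega} \b v_h\cdot\bdiv(\Pi_h\b \tau-\b \tau)=0\quad\forall\,\b v_h\in\mathbf{H}^{\b u}_h,\qquad
\int_{\Omega} \b \eta_h\colon(\Pi_h\b \tau-\b \tau)=0\quad\forall\,\b \eta_h\in\mathbb{H}^{\gamma}_h,
\]
together with the uniform bound $\norm{\Pi_h\b \tau}_{\Hdiv}\leq C\,\norm{\b \tau}_{\Hdiv}$ with $C$ independent of $h$. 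The first identity is the standard commuting-diagram property of the Raviart--Thomas (resp.\ $\mathbf{BDM}$) interpolant, valid because $\bdiv\,\mathbb{H}^{\b \sigma}_h=\mathbf{H}^{\b u}_h$; the second is what the bubble tensors enriching \eqref{peers} (resp.\ the built-in weak-symmetry structure of the AFW element) are there to guarantee. Since the combination of these two moments is exactly $[B_1(\b \tau-\Pi_h\b \tau),\vec{\b v}_h]=0$ for all $\vec{\b v}_h\in\mathbf{M}_h$, the abstract Fortin lemma converts the continuous inf-sup of Lemma \ref{lem_inf_sup_B1} into \eqref{inf_sup_B_h}, with discrete constant $\hat{\beta}_{1_d}=\hat{\beta}_1/C$.

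If one prefers an explicit construction, I would instead split $\b \tau_h=\b \tau_h^{\mathrm{div}}+\b \tau_h^{\mathrm{b}}$: the first summand lives in the RT/$\mathbf{BDM}$ part and is chosen so that $\bdiv\,\b \tau_h^{\mathrm{div}}=-\b v_h$ with $\norm{\b \tau_h^{\mathrm{div}}}_{\Hdiv}\lesssim\norm{\b v_h}_{\L2}$, while the second is a divergence-free bubble contribution realising $-\int_{\Omega}\b \eta_h\colon\b \tau_h^{\mathrm{b}}\gtrsim\norm{\b \eta_h}_{\La}^2$ without perturbing the divergence moment; this is the two-step argument of \cite{arnold84} for PEERS and of \cite{afw-2007} for AFW, adapted to $k\geq1$ as in \cite{gatica13,gatica06}. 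The main obstacle in either route is the coupling of the two moment conditions: the enriched/bubble degrees of freedom must control the skew-symmetric pairing against $\b \eta_h$ while leaving the divergence--displacement balance intact, and it is exactly this decoupling, together with its $h$-uniformity, that rests on the specific design of the PEERS and AFW spaces rather than on generic approximation estimates.
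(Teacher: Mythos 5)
Your proposal is correct and takes essentially the same route as the paper: the paper states this lemma without a proof of its own, appealing to the classical discrete stability of the PEERS and AFW families \cite{arnold84,afw-2007,gatica06,gatica13}, and your argument---dropping $\widetilde{q}_h$ to reduce \eqref{inf_sup_B_h} to the stress-only elasticity inf-sup and then transferring Lemma \ref{lem_inf_sup_B1} via a Fortin operator (or the explicit RT/bubble two-step construction)---is precisely the standard proof underlying those references. The only detail to state carefully is the extension of the Raviart--Thomas/BDM interpolant beyond regular tensors, which requires a bounded quasi-interpolant or an auxiliary regular stress rather than a plain density argument, exactly as handled in the cited works.
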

\begin{lemma}\label{inequality_sigma_GS}
	There exists $c_3>0$, independent of $h$, such that 
	\begin{equation*}
	c_3\Vert\b \tau_h\Vert^2_{\mathbb{H}(\mathbf{div},\Omega)}\leq \Vert\b \tau_{0,h}\Vert^2_{\mathbb{H}(\mathbf{div},\Omega)}\quad \forall\, \b \tau_h \in \mathbb{H}_{h}^{\b \sigma}.	
	\end{equation*}
\end{lemma}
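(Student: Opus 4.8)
The plan is to obtain this discrete bound as an immediate consequence of its continuous counterpart \eqref{inequality_2}, the whole point being that the stress space $\mathbb{H}_h^{\b \sigma}$ is a \emph{conforming} subspace of $\HNdiv$. First I would observe that, whichever family is chosen — the PEERS$_k$ elements \eqref{peers} or the AFW$_k$ elements \eqref{discrete-spaces-elasticity-AFW} — the space $\mathbb{H}_h^{\b \sigma}$ is defined precisely as a subset of $\HNdiv$. Hence every $\b \tau_h\in \mathbb{H}_h^{\b \sigma}$ is, in particular, an admissible argument in the continuous inequality.

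Next I would clarify the meaning of $\b \tau_{0,h}$ in the statement: it is simply the splitting \eqref{eq:deco} applied to $\b \tau_h$, so that $\b \tau_{0,h}=\b \tau_h-\bigl\{\tfrac{1}{d|\Omega|}\int_\Omega \mathrm{tr}(\b \tau_h)\bigr\}\mathbb{I}\in \Hodiv$ is exactly the zero-mean-trace component of $\b \tau_h$ regarded as an element of $\Hdiv$. Since both families contain the constant tensors, $\mathbb{I}\in \mathbb{H}_h^{\b \sigma}$, and therefore $\b \tau_{0,h}$ in fact lies again in $\mathbb{H}_h^{\b \sigma}$; this stronger fact is not required for the inequality itself but is worth noting for consistency with the discrete kernel characterisation \eqref{kernel_sigma}. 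With these identifications, inequality \eqref{inequality_2} — valid for all $\b \tau\in\HNdiv$ with a constant $c_2$ depending only on $\Sigma$ and $\Omega$ — can be invoked verbatim with $\b \tau=\b \tau_h$, yielding $c_2\Vert\b \tau_h\Vert^2_{\mathbb{H}(\mathbf{div},\Omega)}\leq \Vert\b \tau_{0,h}\Vert^2_{\mathbb{H}(\mathbf{div},\Omega)}$, so the claim holds with $c_3:=c_2$.

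There is no genuine obstacle here: the estimate is inherited from the continuous level with no loss in the constant, precisely because of the conformity $\mathbb{H}_h^{\b \sigma}\subseteq\HNdiv$. The only delicate point to keep in view — and the only thing the proof must really assert — is that $c_2$ is a purely geometric quantity, carrying no dependence whatsoever on the triangulation $\mathcal{T}_h$ or the meshsize $h$; consequently the resulting $c_3$ is automatically independent of $h$, as required for the stability analysis of the Galerkin scheme.
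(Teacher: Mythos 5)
Your main argument is correct and is in substance the paper's own proof: the paper disposes of this lemma in one line by re-invoking the argument behind \eqref{inequality_2}, ``now using that $\b \tau_h\nn=\cero$ for all $\b \tau_h \in \mathbb{H}^{\b \sigma}_h$'', which is exactly the conformity $\mathbb{H}^{\b \sigma}_h\subseteq\HNdiv$ you exploit; since $c_2$ in \eqref{inequality_2} depends only on $\Sigma$ and $\Omega$, taking $c_3=c_2$ indeed gives a constant independent of $h$.

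One correction to your parenthetical remark (which, as you yourself note, the proof does not use): it is \emph{not} true that $\mathbb{I}\in\mathbb{H}^{\b \sigma}_h$, nor that $\b \tau_{0,h}\in\mathbb{H}^{\b \sigma}_h$. Although $\mathbb{I}$ is locally an admissible polynomial for both \eqref{peers} and \eqref{discrete-spaces-elasticity-AFW}, both spaces are defined as subsets of $\HNdiv$, and $\mathbb{I}\nn=\nn\neq\cero$ on $\Sigma$, so the identity tensor violates the essential boundary condition built into the space. Consequently, for $\b \tau_h\in\mathbb{H}^{\b \sigma}_h$ the component $\b \tau_{0,h}=\b \tau_h-\hat{d}\,\mathbb{I}$ from \eqref{eq:deco} has normal trace $-\hat{d}\,\nn$ on $\Sigma$; it lies in $\Hodiv$ but, unless $\hat{d}=0$, in neither $\HNdiv$ nor $\mathbb{H}^{\b \sigma}_h$. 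Far from being a consistency issue, this is the very mechanism that makes \eqref{inequality_2} (and hence the present lemma) work: because no nonzero multiple of $\mathbb{I}$ can satisfy the boundary condition on $\Sigma$, the identity $\b \tau_{0,h}\nn=-\hat{d}\,\nn$ on $\Sigma$ is what allows $|\hat{d}|$ to be controlled by $\Vert\b \tau_{0,h}\Vert_{\mathbb{H}(\mathbf{div},\Omega)}$.
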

\begin{proof}
	It follows the same arguments from \cite[Lemma 2.4]{gatica06}, but now using that $\b \tau_h\b n=\b 0$ for all $\b \tau_h \in \mathbb{H}^{\b \sigma}_h$. 
\end{proof}
\begin{lemma}
There exists $\hat{\beta}_{d}>0,$ independent of $h$, such that for all \begin{equation}\label{inf_sup_B1_h}\quad \sup_{{\b r}_h\in \mathbb{H}^{\b t}_h \setminus\{\b 0\}} \frac{ [B({\b r}_h),\vec{\b \sigma}_h]}{\norm{{\b r}_h}_{\mathbb{L}^2(\Omega)}}\geq \hat{\beta}_{d}\norm{\vec{\b \sigma}_h}_{\mathbf{X}}\quad \forall\, \vec{\b \sigma}_h\in  \mathbb{V}_{h}.\end{equation}
\end{lemma}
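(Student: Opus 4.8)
The plan is to mirror the continuous inf-sup argument of Lemma~\ref{inf_sup_B} (cf.\ \cite[Lemma 2.5]{gatica06}) at the discrete level, replacing the continuous estimate \eqref{inequality_2} by its discrete counterpart Lemma~\ref{inequality_sigma_GS}, while \eqref{inequality_1} is used unchanged since it is valid for every element of $\mathbb{H}(\mathbf{div},\Omega)$, discrete ones included. The whole point is that the strain space $\mathbb{H}^{\b t}_{h}$ in \eqref{FEM_T}/\eqref{FEM_T1} was tailored to supply the test tensor that realises the supremum. First I would note that any $\vec{\b \sigma}_h=(\b \sigma_h,\widetilde p_h)\in \mathbb{V}_h$ satisfies $\bdiv\,\b \sigma_h=\cero$ (so $\norm{\b \sigma_h}_{\mathbb{H}(\mathbf{div},\Omega)}=\norm{\b \sigma_h}_{\mathbb{L}^2(\Omega)}$), and rewrite the form as $[B(\b r_h),\vec{\b \sigma}_h]=\int_{\Omega}\b r_h\colon(-\b \sigma_h-\widetilde p_h\mathbb{I})$.

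Next I would establish the crucial norm equivalence for the target tensor $\b \zeta_h:=-\b \sigma_h-\widetilde p_h\mathbb{I}$. By pointwise orthogonality of the deviatoric and spherical parts, $\norm{\b \zeta_h}^2_{\mathbb{L}^2(\Omega)}=\norm{\b \sigma_h^{\mathrm d}}^2_{\mathbb{L}^2(\Omega)}+d\,\norm{\tfrac1d\tr(\b \sigma_h)+\widetilde p_h}^2_{\mathrm{L}^2(\Omega)}$. Since $\bdiv\,\b \sigma_h=\cero$, the decomposition \eqref{eq:deco} gives $\bdiv\,\b \sigma_{0,h}=\cero$, so that $\norm{\b \sigma_{0,h}}_{\mathbb{H}(\mathbf{div},\Omega)}=\norm{\b \sigma_{0,h}}_{\mathbb{L}^2(\Omega)}$; chaining Lemma~\ref{inequality_sigma_GS} with \eqref{inequality_1} then yields $\norm{\b \sigma_h}_{\mathbb{H}(\mathbf{div},\Omega)}\le (c_1c_3)^{-1/2}\norm{\b \sigma_h^{\mathrm d}}_{\mathbb{L}^2(\Omega)}$, controlling the $\b \sigma_h$-part of $\norm{\vec{\b \sigma}_h}_{\mathbf{X}}$ by $\norm{\b \zeta_h}_{\mathbb{L}^2(\Omega)}$. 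The pressure part follows from $\norm{\widetilde p_h}_{\mathrm{L}^2(\Omega)}\le\norm{\tfrac1d\tr(\b \sigma_h)+\widetilde p_h}_{\mathrm{L}^2(\Omega)}+\tfrac1{\sqrt d}\norm{\tr(\b \sigma_h)}_{\mathrm{L}^2(\Omega)}$ together with the preceding bound, so that $\norm{\b \zeta_h}_{\mathbb{L}^2(\Omega)}\ge \hat\beta_d\norm{\vec{\b \sigma}_h}_{\mathbf{X}}$ with $\hat\beta_d$ depending only on $c_1,c_3,d$, hence independent of $h$.

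It then remains to exhibit an admissible test function. For the AFW family \eqref{discrete-spaces-elasticity-AFW},\eqref{FEM_T1} the tensor $\b \zeta_h$ itself lies in $\mathbb{H}^{\b t}_{h}$: it carries the same local $\mathbf{BDM}_{k+1}(K)$ space now stripped of the $\mathbb{H}(\mathbf{div},\Omega)$-conformity, and $\widetilde p_h\mathbb{I}$ has $\mathrm{P}_k$ entries. Taking $\b r_h=\b \zeta_h$ gives $[B(\b \zeta_h),\vec{\b \sigma}_h]=\norm{\b \zeta_h}^2_{\mathbb{L}^2(\Omega)}$, whence the quotient equals $\norm{\b \zeta_h}_{\mathbb{L}^2(\Omega)}\ge\hat\beta_d\norm{\vec{\b \sigma}_h}_{\mathbf{X}}$ and \eqref{inf_sup_B1_h} follows.

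The main obstacle is the PEERS family \eqref{peers},\eqref{FEM_T}: there the $\mathrm{P}_k(K)\,\b x$ component of $\mathbf{RT}_k(K)$ makes $\b \sigma_h$, and therefore $\b \zeta_h$, lie outside $\mathbb{H}^{\b t}_{h}$, so the direct choice $\b r_h=\b \zeta_h$ is no longer admissible. I expect to resolve this exactly as in \cite[Section 2.4]{gatica13}, which extends \cite{gatica06} to $k\ge 1$: the deviatoric bubble enrichment $([\mathbf{B}_k(K)]^d)^{\mathrm d}$ built into \eqref{FEM_T} is precisely what allows the construction of an admissible $\b r_h\in\mathbb{H}^{\b t}_{h}$ reproducing enough of $\b \sigma_h^{\mathrm d}$ element by element, and the delicate step is verifying that this enrichment provides the lower bound uniformly in $h$. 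Once such $\b r_h$ is secured, the estimate from the previous paragraph closes the argument in the same way.
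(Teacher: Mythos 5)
Your core mechanism coincides with the paper's: test with a tensor assembled from $\b \sigma_h$ and $\widetilde p_h\mathbb{I}$ so that $[B(\b r_h),\vec{\b \sigma}_h]=\Vert \b \zeta_h\Vert^2_{\mathbb{L}^2(\Omega)}$, then recover $\Vert\vec{\b \sigma}_h\Vert_{\mathbf{X}}$ from $\Vert\b \sigma_h^{\mathrm d}\Vert_{\mathbb{L}^2(\Omega)}$ and $\Vert\tfrac1d\tr(\b \sigma_h)+\widetilde p_h\Vert_{\mathrm{L}^2(\Omega)}$ by chaining Lemma \ref{inequality_sigma_GS} with \eqref{inequality_1}. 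The paper does exactly this, only with the pair of test tensors $\b \tau_h^{\mathrm d}$ and $\widetilde q_h\mathbb{I}+\b \tau_h$ in place of your single combination, referring to \cite[Lemma 2.5]{gatica06} for the algebra you spell out. Your norm-equivalence step is correct and $h$-uniform, and your AFW branch \eqref{discrete-spaces-elasticity-AFW}, \eqref{FEM_T1} is complete.

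The genuine gap is the PEERS branch, which you leave as an expectation: you assert that the $\mathrm{P}_k(K)\,\b x$ tail of $\mathbf{RT}_k(K)$ puts $\b \zeta_h$ outside $\mathbb{H}^{\b t}_h$, and defer to an unexecuted element-by-element construction from \cite{gatica13} whose uniformity in $h$ you yourself flag as unverified. In fact the obstacle is illusory, because \eqref{inf_sup_B1_h} is only claimed for $\vec{\b \sigma}_h\in\mathbb{V}_h$, and kernel elements are \emph{pointwise} divergence-free, cf.\ \eqref{kernel_sigma}: for any PEERS tensor, $\bdiv \b \tau_h|_K$ lies in $\mathbf{P}_k(K)$ (the bubble rows are divergence-free), and since the discrete displacements form the full piecewise $\mathbf{P}_k$ space, the orthogonality $\int_{\Omega}\b v_h\cdot\bdiv\b \tau_h=0$ for all $\b v_h\in\mathbf{H}^{\b u}_h$ forces $\bdiv\b \tau_h\equiv\cero$. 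Moreover, a row-wise divergence-free element of $[\mathbf{RT}_k(K)]^d$ has no $\mathrm{P}_k(K)\,\b x$ component: writing a row as $\b p+q\,\b x$ with $\b p\in\mathbf{P}_k(K)$ and $q$ homogeneous of degree $k$ (lower-order parts of $q\,\b x$ are absorbed into $\b p$), Euler's identity gives $\vdiv(q\,\b x)=(k+d)\,q$, which is the only homogeneous degree-$k$ contribution to the divergence because $\deg(\vdiv\b p)\le k-1$; hence $q=0$. Consequently, on the kernel, $\b \sigma_h|_K\in\mathbb{P}_k(K)\oplus[\mathbf{B}_k(K)]^d$, so $\b \zeta_h=-\b \sigma_h-\widetilde p_h\mathbb{I}$ does belong to the space $\mathbb{H}^{\b t}_h$ of \eqref{FEM_T} --- your single test function does not even require the deviatoric enrichment $([\mathbf{B}_k(K)]^d)^{\mathrm d}$, which the paper carries only because its proof tests with $\b \tau_h^{\mathrm d}$ separately. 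This restriction-to-the-kernel observation is precisely what the paper's one-line proof records when it notes that $\b \tau_h^{\mathrm d}$ and $\widetilde q_h\mathbb{I}+\b \tau_h$ lie in $\mathbb{H}^{\b t}_h$ \emph{for each} $\vec{\b \tau}_h\in\mathbb{V}_h$; with it, your argument closes for PEERS verbatim, with the same $h$-independent constant, and no interpolation construction or extra uniformity check is needed.
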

\begin{proof}
The proof follows applying Lemmas \ref{lem:inequality} and \ref{inequality_sigma_GS}, and noticing that $\b \tau^{\mathrm{d}}_h$ and $(\widetilde{q}_h\mathbb{I}+\b \tau_h)\in \mathbb{H}^{\b t}_h$ for each $\vec{\b \tau}_h\in \mathbb{V}_h$. We refer to \cite[Lemma 2.5]{gatica06} for further details. 
\end{proof}
Now, we are in a position to establish the well-definedness of  $\mathbf{T}_h$. First we guarantee that the discrete problems defined by   $\mathbf{S}_h$ and $\widetilde{\mathbf{S}}_h$ are well-posed. In what follows, we show this result for $\mathbf{S}_h$, considering as in the continuous case, generic functionals in \eqref{VFh-poro_1}-\eqref{VFh-poro_4}, namely $\widetilde{H}_{w_h}, \widetilde{F}_{1,h}, \widetilde{F}_h$, and $\widetilde{G}_h$ instead of ${H}_{w_h}, \mathrm{O}, {F}$, and ${G}$, respectively. 
\begin{lemma}
	For each $\omega_h\in \mathrm{H}^{\omega}_h$ the problem \eqref{VFh-poro_1}-\eqref{VFh-poro_4} has a unique solution $(\b t_h, \vec{\b \sigma}_h,\vec{\b u}_h,p_h)\\\in \mathbb{H}^{\b t}_h\times\mathbf{X}_h\times\mathbf{M}_h\times\mathrm{H}^{p}_h$. Moreover, there exists $C_d>0$ independent of $h$ and $\lambda_s$, such that
	\begin{align}\label{bound_TSPh}
\nonumber
\Vert \mathbf{S}_h(\omega_h)\Vert &=\Vert \b t_h\Vert_{\mathbb{L}^2(\Omega)}+\Vert \vec{\b \sigma}_h\Vert_{\mathbf{X}}+\Vert \vec{\b u}_h\Vert_{\mathbf{M}}+\Vert p_h\Vert_{\mathrm{H}^1(\Omega)}\\ &\leq C_d\left(\Vert \widetilde{H}_{\omega_h}\Vert_{(\mathbb{H}^{\b t}_h)'}+\Vert\widetilde{F}_{1,h}\Vert_{\mathbf{X}'}+\Vert \widetilde{F}_h\Vert_{\mathbf{M}_h'}+\Vert \widetilde{G}_h\Vert_{(\mathrm{H}^{p}_{h})'}\right).\end{align}
\end{lemma}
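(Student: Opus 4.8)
The plan is to transcribe the continuous argument of Lemma~\ref{lemma_main} into the discrete setting, replacing each ingredient by its Galerkin counterpart and checking that every constant is independent of both $h$ and $\lambda_s$. The three structural properties required are all available at the discrete level: $A$ is elliptic on $\mathbb{H}^{\b t}_h$ with the same constant $2\mu_s$ as in \eqref{ellipticity_TSP} (ellipticity is inherited by subspaces), $C$ remains positive semi-definite on $\mathbb{V}_h$ as in \eqref{semi_definite_C}, and the discrete inf-sup conditions \eqref{inf_sup_B_h} and \eqref{inf_sup_B1_h} for $B_1$ and $B$ hold with constants $\hat\beta_{1_d},\hat\beta_d>0$ independent of $h$ (the $h$-robustness of $\hat\beta_d$ being precisely what Lemma~\ref{inequality_sigma_GS} secures).

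First I would dispatch the reduced problem posed on the kernel $\mathbb{V}_h$, namely the discrete analogue of \eqref{VF-poro_1_reduced}-\eqref{VF-poro_3_reduced} obtained by suppressing the $B_1,B_1^*$ couplings. Its a priori bound is produced by repeating verbatim the two-step argument of Lemma~\ref{stability-lemma}: introduce the discrete splitting $\hat{\b t}_h=\hat{\b t}_{0,h}+\hat{\b t}_h^\perp$ relative to $\widetilde{\mathbb{V}}_h:=\{\hat{\b r}_h\in\mathbb{H}^{\b t}_h:[B(\hat{\b r}_h),\vec{\b\tau}_h]=0\ \forall\,\vec{\b\tau}_h\in\mathbb{V}_h\}$, derive from \eqref{inf_sup_B1_h} the corresponding orthogonal inf-sup inequality, and reproduce \eqref{bound_t_orthogonal}-\eqref{final_bound_hat_t_sigma} with $\hat\beta_d$ in place of $\hat\beta$. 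Since the reduced problem is a \emph{square} linear system, this estimate already forces injectivity of its homogeneous version, and finite-dimensionality then upgrades injectivity to unique solvability; no Fredholm argument (as in Lemmas~\ref{F-invert}-\ref{F-injective}) is needed. This yields the discrete counterpart of Lemma~\ref{well-posed-poro} with a constant independent of $h$ and $\lambda_s$.

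Next I would recover the full problem \eqref{VFh-poro_1}-\eqref{VFh-poro_4} through the lifting device of Lemma~\ref{lemma_main}. The discrete inf-sup \eqref{inf_sup_B_h}, together with \cite[Lemma 4.1]{girault_book86}, guarantees that $B_1:\mathbb{V}_h^\perp\to\mathbf{M}_h'$ and $B_1^*:\mathbf{M}_h\to\mathbb{V}_h^\circ$ are isomorphisms with norms at most $1/\hat\beta_{1_d}$. Setting $\vec{\b\sigma}_{0,h}:=B_1^{-1}(\widetilde{F}_h)\in\mathbb{V}_h^\perp$, so that $\Vert\vec{\b\sigma}_{0,h}\Vert_{\mathbf{X}}\leq\hat\beta_{1_d}^{-1}\Vert\widetilde{F}_h\Vert_{\mathbf{M}_h'}$, I would replace the data by $\underline{H}_{\omega_h}:=\widetilde{H}_{\omega_h}-B^*(\vec{\b\sigma}_{0,h})$, $\underline{F}_h:=\widetilde{F}_{1,h}+C(\vec{\b\sigma}_{0,h})$ and $\underline{G}_h:=\widetilde{G}_h-B_2(\vec{\b\sigma}_{0,h})$, solve the reduced problem to obtain $(\underline{\b t}_h,\underline{\vec{\b\sigma}}_h,\underline{p}_h)$, and recover the displacement-rotation pair as $\vec{\b u}_h:=(B_1^*)^{-1}(\underline{F}_h-B(\underline{\b t}_h)+C(\underline{\vec{\b\sigma}}_h)-B_2^*(\underline{p}_h))\in\mathbf{M}_h$. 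Then $(\underline{\b t}_h,\underline{\vec{\b\sigma}}_h+\vec{\b\sigma}_{0,h},\vec{\b u}_h,\underline{p}_h)$ solves \eqref{VFh-poro_1}-\eqref{VFh-poro_4}; uniqueness follows, as in the continuous case, by subtracting two solutions and noting that their difference, once $\vec{\b\sigma}_{0,h}$ is removed, solves the homogeneous reduced problem.

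Finally, the stability bound \eqref{bound_TSPh} is assembled exactly as at the end of Lemma~\ref{lemma_main}: estimate $\Vert\vec{\b u}_h\Vert_{\mathbf{M}}$ via \eqref{inf_sup_B_h} together with the definitions of $B$, $C$ and $B_2^*$, insert the reduced-problem bound for $(\underline{\b t}_h,\underline{\vec{\b\sigma}}_h,\underline{p}_h)$, and collect the $1/\lambda_s$ and $\alpha/\lambda_s$ prefactors. I expect the only genuine work to lie in the robustness bookkeeping: one must confirm that the discrete inf-sup constants are truly $h$-independent for the chosen PEERS$_k$/AFW$_k$ families, and that every $\lambda_s$-dependent prefactor remains bounded as $\lambda_s\to\infty$, exactly as recorded after \eqref{prel_bound_sigma}. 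With these two points verified, the resulting constant $C_d$ is independent of $h$ and $\lambda_s$, which completes the argument.
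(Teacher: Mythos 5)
Your proposal is correct and follows essentially the same route as the paper: the authors likewise bypass the Fredholm machinery by exploiting finite-dimensionality (injectivity of the homogeneous discrete reduced problem, argued as in Lemma~\ref{F-injective}, suffices for unique solvability), obtain stability exactly as in Lemma~\ref{stability-lemma}, and then reproduce the lifting argument of Lemma~\ref{lemma_main} using the discrete inf-sup conditions \eqref{inf_sup_B_h} and \eqref{inf_sup_B1_h}. Your only cosmetic deviation is deducing injectivity directly from the a priori estimate rather than by separately testing the homogeneous system, which is an equivalent reorganisation of the same argument.
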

\begin{proof}
It follows similar to the corresponding proof for the continuos case. In fact, for the well-posedness of the discrete version of problem \eqref{perturbed-problem}, we take advantage that for finite dimension, it suffices to prove that the homogeneous problem only has the trivial solution, which follows analogously to the proof of Lemma \ref{F-injective}, whereas for the corresponding stability result \eqref{bound_TSPh}, we proceed exactly as in Lemma \ref{stability-lemma}. Therefore, by noticing that $B$ satisfies the inf-sup condition \eqref{inf_sup_B_h}, the proof follows the same steps as in Lemma \ref{lemma_main} but now using the arguments given by \cite[Theorem 3.1]{gatica03}.  
\end{proof}
Now we establish the unique solvability of the nonlinear problem \eqref{VFh_diff}.
\begin{lemma}
	For each $\b \sigma_h \in \mathbb{H}^{\b \sigma}_h$, the problem \eqref{VFh_diff} has a unique solution $\omega_h:= \widetilde{\mathbf{S}}_h(\b \sigma_h)\in \mathrm{H}^{\omega}_{h}$. Moreover, with the same constant $\widetilde{\alpha}$ given in Lemma \ref{well-posed-dif}, there holds
	\begin{equation}\label{stabilization-diff_lemma_h}
	\Vert\widetilde{\mathbf{S}}_h(\b \sigma_h )\Vert:=\Vert \omega_h\Vert_{\mathrm{H}^1(\Omega)}\leq \widetilde{\alpha}^{-1}\Vert J\Vert_{(\mathrm{H}^\omega_h)'}.
	\end{equation}
\end{lemma}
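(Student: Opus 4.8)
The plan is to recognise that, for a fixed $\b \sigma_h\in\mathbb{H}^{\b \sigma}_h$, problem \eqref{VFh_diff} is a linear, coercive, conforming Galerkin discretisation of the elliptic problem already analysed in Lemma \ref{well-posed-dif}. Consequently, a direct application of the Lax--Milgram lemma on the finite-dimensional subspace $\mathrm{H}^{\omega}_h\subseteq \mathrm{H}^1_\Gamma(\Omega)$ will yield existence, uniqueness and the stability bound at once, with the relevant constants inherited verbatim from the continuous case.

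First I would verify that the bilinear form $[\mathscr{A}_{\b \sigma_h}(\cdot),\cdot]$ is bounded on $\mathrm{H}^{\omega}_h\times\mathrm{H}^{\omega}_h$: by Cauchy--Schwarz together with the upper bound $\bv^{\mathrm{t}}D(\cdot)\bv\leq d_2|\bv|^2$ from \eqref{prop-D} and the boundedness of $\phi$, one obtains $|[\mathscr{A}_{\b \sigma_h}(\omega_h),\theta_h]|\leq C\,\Vert\omega_h\Vert_{\mathrm{H}^1(\Omega)}\Vert\theta_h\Vert_{\mathrm{H}^1(\Omega)}$ with $C$ independent of $h$ and of $\b \sigma_h$. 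The essential point is ellipticity: since the inequality \eqref{ellipticity_A} was established pointwise using only the uniform lower bound $d_1|\bv|^2\leq \bv^{\mathrm{t}}D(\cdot)\bv$ and the positivity of $\phi$, it holds for every element of $\mathrm{H}^1_\Gamma(\Omega)$ and, in particular, for every $\omega_h\in\mathrm{H}^{\omega}_h$. Hence $[\mathscr{A}_{\b \sigma_h}(\omega_h),\omega_h]\geq \widetilde{\alpha}\Vert\omega_h\Vert^2_{\mathrm{H}^1(\Omega)}$ with exactly the same constant $\widetilde{\alpha}=\min\{D_0,\phi\}$ as in Lemma \ref{well-posed-dif}, and this constant depends on neither $h$ nor $\b \sigma_h$. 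Because $\mathrm{H}^{\omega}_h$ is finite-dimensional (thus a closed subspace of a Hilbert space) and $J\in(\mathrm{H}^{\omega}_h)'$, the Lax--Milgram lemma applied on $\mathrm{H}^{\omega}_h$ delivers a unique solution $\omega_h=\widetilde{\mathbf{S}}_h(\b \sigma_h)$.

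For the stability estimate \eqref{stabilization-diff_lemma_h} I would simply choose $\theta_h=\omega_h$ in \eqref{VFh_diff} and combine the ellipticity inequality with the continuity of the right-hand side, namely $\widetilde{\alpha}\Vert\omega_h\Vert^2_{\mathrm{H}^1(\Omega)}\leq[\mathscr{A}_{\b \sigma_h}(\omega_h),\omega_h]=[J,\omega_h]\leq \Vert J\Vert_{(\mathrm{H}^{\omega}_h)'}\Vert\omega_h\Vert_{\mathrm{H}^1(\Omega)}$, and then divide through by $\Vert\omega_h\Vert_{\mathrm{H}^1(\Omega)}$ (the case $\omega_h=0$ being trivial). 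I would note that the dual norm appearing is that of the discrete space $(\mathrm{H}^{\omega}_h)'$, consistently with the fact that $J$ is tested only against discrete functions.

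There is no genuine obstacle here: the argument is a transcription of the continuous proof to the conforming subspace $\mathrm{H}^{\omega}_h$. The only point deserving emphasis, and the one I would stress, is that both the ellipticity constant $\widetilde{\alpha}$ and the continuity constant are independent of $h$ and of the data $\b \sigma_h$; this $h$- and $\lambda_s$-robust uniformity is precisely what later allows the discrete fixed-point and convergence analysis to mirror the continuous setting.
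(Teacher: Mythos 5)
Your proposal is correct and follows essentially the same route as the paper: the paper's proof simply states that the result follows identically to the continuous Lemma \ref{well-posed-dif} (ellipticity of $\mathscr{A}_{\b \sigma_h}$ with the same constant $\widetilde{\alpha}$, then Lax--Milgram, then testing with the solution itself), now carried out in the conforming subspace $\mathrm{H}^{\omega}_h$. Your write-up merely makes explicit the details (boundedness, pointwise nature of the ellipticity bound, finite dimensionality ensuring closedness) that the paper leaves implicit.
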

\begin{proof}
	It follows identically as in the proof of Lemma \ref{well-posed-dif} but now with elements living in $\mathrm{H}^{\omega}_h$ (cf. \eqref{FE_concentration}).
\end{proof}
\subsection{Discrete solvability analysis}
In this section we address the solvability of \eqref{def-Th}. We verify the hypotheses of the Brouwer fixed-point theorem to prove that \eqref{def-Th} has at least one fixed point. Most of the details can be omitted since they follow straightforwardly by adapting the results given in Section \ref{solvability_FP}.  

Let $\mathbf{W}_h:=\left\{\omega_h \in \mathrm{H}^\omega_h:\;\; \norm{\omega_h}_{\mathrm{H}^1(\Omega)}\leq r\right\}$, be a compact and convex subset of $\mathrm{H}^\omega_h$, with $r>0$ defined as in Lemma \ref{well-posed-dif}. We now provide the discrete analogues of Lemma \ref{bound_TW}.
\begin{lemma}\label{bound_TWh}
	For the closed ball $\mathbf{W}_h$, it holds that $\mathbf{T}_h(\mathbf{W}_h)\subseteq \mathbf{W}_h$. 
\end{lemma}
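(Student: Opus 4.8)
The plan is to mirror exactly the argument used in the continuous setting (Lemma~\ref{bound_TW}), exploiting the fact that the discrete diffusion operator inherits the same ellipticity constant $\widetilde{\alpha}$ as its continuous counterpart. Recall that by definition $\mathbf{T}_h(\omega_h) = \widetilde{\mathbf{S}}_h(\mathbf{S}_{2,h}(\omega_h))$, so that for any $\omega_h \in \mathbf{W}_h$ the image $\mathbf{T}_h(\omega_h)$ is precisely the unique solution of the discrete diffusion problem \eqref{VFh_diff} associated with the stress field $\b\sigma_h = \mathbf{S}_{2,h}(\omega_h)\in \mathbb{H}^{\b\sigma}_h$.

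First I would invoke the stability estimate \eqref{stabilization-diff_lemma_h} for $\widetilde{\mathbf{S}}_h$, which yields directly
\[
\norm{\mathbf{T}_h(\omega_h)}_{\mathrm{H}^1(\Omega)} = \norm{\widetilde{\mathbf{S}}_h(\b\sigma_h)}_{\mathrm{H}^1(\Omega)} \leq \widetilde{\alpha}^{-1}\norm{J}_{(\mathrm{H}^\omega_h)'}.
\]
The key observation is that this bound is \emph{uniform in the argument} $\omega_h$: the right-hand side depends only on the data functional $J$ and the ellipticity constant $\widetilde{\alpha}$, and not at all on the particular stress $\b\sigma_h$ fed into the diffusion problem. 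This is because the stress-dependent diffusivity $D(\b\sigma_h)$ enters \eqref{VFh_diff} only through the uniformly positive-definite lower bound in \eqref{prop-D}, which is independent of $\b\sigma_h$.

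It then remains only to compare the discrete dual norm $\norm{J}_{(\mathrm{H}^\omega_h)'}$ with the radius $r$. Since $\mathrm{H}^\omega_h \subseteq \mathrm{H}^1_\Gamma(\Omega)$, the restriction of the functional $J$ to the finite-dimensional subspace satisfies $\norm{J}_{(\mathrm{H}^\omega_h)'} \leq \norm{J}_{\mathrm{H}^1_\Gamma(\Omega)'}$, whence
\[
\norm{\mathbf{T}_h(\omega_h)}_{\mathrm{H}^1(\Omega)} \leq \widetilde{\alpha}^{-1}\norm{J}_{\mathrm{H}^1_\Gamma(\Omega)'} = r,
\]
with $r$ as defined in Lemma~\ref{well-posed-dif} and used to build $\mathbf{W}_h$. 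This shows $\mathbf{T}_h(\omega_h)\in \mathbf{W}_h$ for every $\omega_h \in \mathbf{W}_h$, i.e. $\mathbf{T}_h(\mathbf{W}_h)\subseteq \mathbf{W}_h$, as claimed. I do not anticipate any genuine obstacle here: the entire content is the uniform-in-$\b\sigma_h$ stability of $\widetilde{\mathbf{S}}_h$, which has already been secured in the preceding lemma, so the proof reduces to recalling the definition of $\mathbf{T}_h$ and chaining the two inequalities above.
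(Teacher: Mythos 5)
Your proof is correct and follows essentially the same route as the paper, which treats this lemma as the direct discrete analogue of Lemma~\ref{bound_TW}: recall the definition of $\mathbf{T}_h$ and apply the stability estimate \eqref{stabilization-diff_lemma_h}. The only addition is your explicit observation that $\norm{J}_{(\mathrm{H}^\omega_h)'}\leq \norm{J}_{\mathrm{H}^1_\Gamma(\Omega)'}$ since $\mathrm{H}^\omega_h\subseteq \mathrm{H}^1_\Gamma(\Omega)$, a detail the paper leaves implicit but which is needed because the ball $\mathbf{W}_h$ is defined with the continuous dual norm in its radius $r$.
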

On the other hand, it is not difficult to prove that for the corresponding discrete case, the operators $\mathbf{S}$ and $\mathbf{S}_h$ satisfy the bounds given by Lemma \ref{Lipschitz_S_WS} with a constant $C_{\mathbf{S}_d}>0$ instead of $C_{\mathbf{S}}$, and therefore we have 
\begin{lemma}\label{lemma:LC_Th}
	There exists a constant $C_{\mathbf{T}_d}>0$, independent of $h$ and $\lambda_s$ such that 
\[	\Vert\mathbf{T}_h(\omega_{1,h})-\mathbf{T}_h(\omega_{2,h})\Vert_{\mathrm{H}^1(\Omega)}\leq C_{\mathbf{T}_d}d_3\beta\Vert\mathbf{T}_h(\omega_{2,h})\Vert_{W^{1,\infty}(\Omega)}\Vert\omega_{1,h}-\omega_{2,h}\Vert_{\mathrm{L}^2(\Omega)}  \quad \forall\, \omega_{1,h}, \omega_{2,h}\in \mathrm{H}^\omega_h.\]
\end{lemma}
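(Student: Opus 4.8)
The plan is to exploit the factorisation $\mathbf{T}_h = \widetilde{\mathbf{S}}_h \circ \mathbf{S}_{2,h}$ recorded in \eqref{def-Th}, exactly paralleling the continuous argument of Lemma \ref{lemma:LC_T}. Since all the analytical work has already been absorbed into the discrete Lipschitz estimates for $\mathbf{S}_h$ and $\widetilde{\mathbf{S}}_h$ (the discrete counterparts of Lemma \ref{Lipschitz_S_WS}, with $C_{\mathbf{S}}$ replaced by $C_{\mathbf{S}_d}$), the proof of the lemma itself reduces to chaining these two bounds through the composition.

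First I would fix $\omega_{1,h}, \omega_{2,h} \in \mathrm{H}^\omega_h$ and set $\b \sigma_{i,h} := \mathbf{S}_{2,h}(\omega_{i,h})$, so that $\mathbf{T}_h(\omega_{i,h}) = \widetilde{\mathbf{S}}_h(\b \sigma_{i,h})$ for $i = 1,2$. Applying the discrete analogue of the second estimate in Lemma \ref{Lipschitz_S_WS} to $\widetilde{\mathbf{S}}_h$, with $\b \sigma_{2,h}$ playing the role of the second argument, gives
\begin{equation*}
\Vert \mathbf{T}_h(\omega_{1,h}) - \mathbf{T}_h(\omega_{2,h})\Vert_{\mathrm{H}^1(\Omega)} \leq \frac{d_3}{\widetilde{\alpha}} \Vert \b \sigma_{1,h} - \b \sigma_{2,h}\Vert_{\mathbb{L}^2(\Omega)} \Vert \mathbf{T}_h(\omega_{2,h})\Vert_{W^{1,\infty}(\Omega)},
\end{equation*}
where I have used $\widetilde{\mathbf{S}}_h(\b \sigma_{2,h}) = \mathbf{T}_h(\omega_{2,h})$ to identify the $W^{1,\infty}$ factor. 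Next I would control the stress increment by the corresponding component of the discrete Lipschitz bound for $\mathbf{S}_h$, namely
\begin{equation*}
\Vert \b \sigma_{1,h} - \b \sigma_{2,h}\Vert_{\mathbb{L}^2(\Omega)} \leq \Vert \mathbf{S}_h(\omega_{1,h}) - \mathbf{S}_h(\omega_{2,h})\Vert \leq C_{\mathbf{S}_d}\,\beta\,\Vert \omega_{1,h} - \omega_{2,h}\Vert_{\mathrm{L}^2(\Omega)},
\end{equation*}
the first inequality holding because the $\mathbb{L}^2$-norm of the stress is dominated by the full product-space norm. Inserting this into the previous display and setting $C_{\mathbf{T}_d} := C_{\mathbf{S}_d}/\widetilde{\alpha}$ yields \eqref{LC_T} in its discrete form, with $C_{\mathbf{T}_d}$ inheriting its independence of $h$ and $\lambda_s$ from $C_{\mathbf{S}_d}$ together with the mesh-independence of $\widetilde{\alpha}$ and $d_3$.

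The proof of Lemma \ref{lemma:LC_Th} is thus essentially bookkeeping; the real obstacle lies upstream, in verifying that the two discrete Lipschitz estimates hold with $h$- and $\lambda_s$-independent constants. For $\mathbf{S}_h$ this requires repeating the difference-problem argument of the discrete version of Lemma \ref{well-posed-poro}: by linearity, $\mathbf{S}_h(\omega_{1,h}) - \mathbf{S}_h(\omega_{2,h})$ solves the same discrete twofold saddle-point system with right-hand side $H_{\omega_{1,h}-\omega_{2,h}}$, and one invokes the $\lambda_s$-robust stability together with the discrete inf-sup conditions \eqref{inf_sup_B_h} and \eqref{inf_sup_B1_h}. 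For $\widetilde{\mathbf{S}}_h$ one re-runs the ellipticity/Lipschitz computation leading to \eqref{LC_omega} on the discrete space $\mathrm{H}^\omega_h$, which goes through verbatim because both the coercivity constant $\widetilde{\alpha}$ and the Lipschitz constant $d_3$ of $D$ are mesh-independent. The only point worth flagging is that the $W^{1,\infty}(\Omega)$-norm of the discrete solute is automatically finite on each fixed mesh, since $\mathbf{T}_h(\omega_{2,h}) \in \mathrm{H}^\omega_h$ is piecewise polynomial; the estimate does not demand that this factor be controlled uniformly in $h$.
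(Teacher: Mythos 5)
Your proposal is correct and follows essentially the same route as the paper: the paper likewise proves the lemma by composing the discrete Lipschitz bounds for $\widetilde{\mathbf{S}}_h$ and $\mathbf{S}_{2,h}$ (the discrete analogues of Lemma \ref{Lipschitz_S_WS}, with $C_{\mathbf{S}_d}$ in place of $C_{\mathbf{S}}$) through the factorisation $\mathbf{T}_h=\widetilde{\mathbf{S}}_h(\mathbf{S}_{2,h}(\cdot))$, exactly as in the continuous Lemma \ref{lemma:LC_T}. Your explicit chaining of the two estimates, and your remark that the $W^{1,\infty}(\Omega)$ factor is finite for piecewise-polynomial discrete solutions without claiming uniformity in $h$, faithfully fill in the details the paper leaves implicit.
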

Finally, thanks to Lemmas \ref{bound_TWh} and \ref{lemma:LC_Th}, and the continuity of $i_c$ (cf. proof of Lemma \ref{theorem:compact}) a straightforward application of the Brouwer
fixed-point theorem implies the main result of this section, stated as follows.
\begin{theorem}\label{main_discrete}
	Let $\mathbf{W}_h$ be defined as at the beginning of this section. Then, there exists $\overline{C}_d>0$ independent of $\lambda_s$ and $h$ such that the coupled problem $\mathrm{(\ref{VFh-poro_1})-\eqref{VFh_diff}}$ has at least one solution $(\b t_h, \vec{\b \sigma}_h,\vec{\b u}_h,p_h, \omega_h)$\newline $\in \mathbb{H}^{\b t}_h\times\mathbf{X}_h\times\mathbf{M}_h\times\mathrm{H}^p_h\times\mathrm{H}^\omega_h$ with $\omega_h\in \mathbf{W}_h$, satisfying the bound 
	\begin{equation*}
	\Vert \b t_h\Vert_{\mathbb{L}^2(\Omega)}+\Vert \vec{\b \sigma}_h\Vert_{\mathbf{X}}+\Vert \vec{\b u}_h\Vert_{\mathbf{M}}+\Vert p_h\Vert_{\mathrm{H}^1(\Omega)}+\Vert \omega_h\Vert_{\mathrm{H}^1(\Omega)} \leq \overline{C}_d\left(\Vert F\Vert_{\mathbf{M}_h'}+\Vert G\Vert_{(\mathrm{H}^{p}_{h})'}+\Vert J\Vert_{(\mathrm{H}^\omega_h)'}\right).
	\end{equation*}
\end{theorem}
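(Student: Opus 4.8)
The plan is to solve the discrete coupled problem by applying Brouwer's fixed-point theorem to the operator $\mathbf{T}_h$ restricted to the compact convex set $\mathbf{W}_h$, mirroring the continuous argument of Section~\ref{solvability_FP} but now exploiting the finite dimensionality of $\mathrm{H}^\omega_h$. First I would recall from Lemma~\ref{bound_TWh} that $\mathbf{T}_h(\mathbf{W}_h)\subseteq\mathbf{W}_h$, so that $\mathbf{T}_h$ is a genuine self-map of $\mathbf{W}_h$. Next, the Lipschitz estimate of Lemma~\ref{lemma:LC_Th}, together with the continuity of the (compact) embedding $i_c:\mathrm{H}^1(\Omega)\to\mathrm{L}^2(\Omega)$, shows that $\mathbf{T}_h$ is continuous on $\mathbf{W}_h$. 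Since $\mathbf{W}_h$ is a nonempty, closed, bounded, convex subset of the finite-dimensional space $\mathrm{H}^\omega_h$, hence compact, Brouwer's theorem furnishes at least one $\omega_h\in\mathbf{W}_h$ with $\mathbf{T}_h(\omega_h)=\omega_h$.

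Once the fixed point is secured, I would unfold the definitions from the discrete fixed-point setting: by the equivalence between \eqref{def-Th} and the Galerkin scheme \eqref{VFh-poro_1}--\eqref{VFh_diff}, setting $\mathbf{S}_h(\omega_h)=(\b t_h,\vec{\b \sigma}_h,\vec{\b u}_h,p_h)$ and $\widetilde{\mathbf{S}}_h(\b \sigma_h)=\omega_h$ produces a complete discrete solution $(\b t_h,\vec{\b \sigma}_h,\vec{\b u}_h,p_h,\omega_h)$, with $\omega_h\in\mathbf{W}_h$ by construction. For the stability bound I would specialise \eqref{bound_TSPh} to the genuine data $\widetilde{H}_{\omega_h}=H_{\omega_h}$, $\widetilde{F}_{1,h}=\mathrm{O}$, $\widetilde{F}_h=F$, $\widetilde{G}_h=G$, and estimate the coupling functional directly from its definition: since $[H_{\omega_h},\b r_h]=\beta\int_{\Omega}\omega_h\,\tr(\b r_h)$, one has $\Vert H_{\omega_h}\Vert_{(\mathbb{H}^{\b t}_h)'}\le C\beta\Vert\omega_h\Vert_{\mathrm{L}^2(\Omega)}$. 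Inserting the diffusion estimate \eqref{stabilization-diff_lemma_h}, namely $\Vert\omega_h\Vert_{\mathrm{H}^1(\Omega)}\le\widetilde{\alpha}^{-1}\Vert J\Vert_{(\mathrm{H}^\omega_h)'}$, controls $\Vert H_{\omega_h}\Vert$ by $\Vert J\Vert$, and combining with \eqref{bound_TSPh} and \eqref{stabilization-diff_lemma_h} and collecting constants yields the asserted estimate.

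The routine steps (self-mapping, continuity, existence) are immediate in finite dimensions; the point requiring care is to verify that the final constant $\overline{C}_d$ is genuinely independent of both $h$ and $\lambda_s$. Robustness in $\lambda_s$ is inherited from the constant $C_d$ in \eqref{bound_TSPh}, which was built to be $\lambda_s$-independent by reproducing the perturbed saddle-point analysis of Lemma~\ref{lemma_main} at the discrete level; robustness in $h$ follows because the discrete inf-sup constants $\hat{\beta}_{1_d}$ and $\hat{\beta}_d$, and the ellipticity constant of $A$ (the same as in the continuous case), do not degenerate under mesh refinement. I would therefore take care that the bookkeeping of constants along the chain $\mathbf{S}_h\to H_{\omega_h}\to\widetilde{\mathbf{S}}_h$ never reintroduces a hidden dependence; this is guaranteed since the only coupling constants entering are $\beta$ and the embedding constant of $i_c$, both mesh- and $\lambda_s$-independent. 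Note that, as with the continuous Schauder argument, only existence is claimed here, with uniqueness deferred to a smallness hypothesis analogous to \eqref{bound_M}.
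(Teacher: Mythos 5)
Your proposal is correct and follows essentially the same route as the paper: existence via Brouwer's fixed-point theorem on the compact convex set $\mathbf{W}_h$ (using Lemmas~\ref{bound_TWh} and \ref{lemma:LC_Th} plus the continuity of $i_c$), followed by the stability bound obtained by specialising \eqref{bound_TSPh} to the data $H_{\omega_h},\mathrm{O},F,G$, bounding $\Vert H_{\omega_h}\Vert_{(\mathbb{H}^{\b t}_h)'}$ by a multiple of $\beta\Vert\omega_h\Vert$, and absorbing it into $\Vert J\Vert_{(\mathrm{H}^\omega_h)'}$ via $\omega_h\in\mathbf{W}_h$ (equivalently, via \eqref{stabilization-diff_lemma_h}). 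The only cosmetic difference is that the paper uses the constant $\sqrt{d}\beta$ with the $\mathrm{H}^1$-norm of $\omega_h$ where you use $C\beta$ with the $\mathrm{L}^2$-norm, which is immaterial.
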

\begin{proof}
	 We proceed similarly as in the continuous case. In fact, from \eqref{bound_TSPh} and \eqref{stabilization-diff_lemma_h}, we get 
	\begin{align}\label{stabilization_works}
	&\Vert \b t_h\Vert_{\mathbb{L}^2(\Omega)}+\Vert \vec{\b \sigma}_h\Vert_{\mathbf{X}}+\Vert \vec{\b u}_h\Vert_{\mathbf{M}}+\Vert p_h\Vert_{\mathrm{H}^1(\Omega)}+\Vert \omega_h\Vert_{\mathrm{H}^1(\Omega)}\nonumber\\
	&\qquad \qquad\leq  \overline{C}_{1,d}
	\left(\Vert \widetilde{H}_{\omega_h}\Vert_{(\mathbb{H}^{\b t}_h)'}+\Vert\widetilde{F}_{1,h}\Vert_{\mathbf{X}_h'}+\Vert \widetilde{F}_h\Vert_{\mathbf{M}_h'}+\Vert \widetilde{G}_h\Vert_{(\mathrm{H}^{p}_{h})'}+\Vert J\Vert_{\mathrm{H}^1_\Gamma(\Omega)'}\right)\nonumber.
	\end{align}
	Then, the result follows by setting $\widetilde{H}_{\omega_h}=H_{\omega_h}, \widetilde{F}_{1,h}=\mathrm{O}, \widetilde{F}_h=F$ and $\widetilde{G}_h=G$, noticing that $\Vert H_{\omega_h}\Vert_{(\mathbb{H}^{\b t}_h)'}\leq \sqrt{d}\beta \Vert \omega_h\Vert_{\mathrm{H}^1(\Omega)}$, and applying the fact that $\omega_h\in \mathbf{W}_h$.
\end{proof}
\section{Error analysis}\label{error_analysis}
In this section, we derive the optimal a priori error
estimate. For this purpose, we first establish a C\'ea estimate formulated in the following theorem.
\begin{theorem}\label{theorem:a_priori}
	Let us $(\b t, \vec{\b \sigma},\vec{\b u},p, \omega)$ and $(\b t_h, \vec{\b \sigma}_h,\vec{\b u}_h,p_h, \omega_h)$ be the solutions of problems \eqref{VF-poro_1}-\eqref{VF_diff} and \eqref{VFh-poro_1}-\eqref{VFh_diff}, respectively, and assume that \begin{equation}\label{assumption_theorem}
		\hat{C}_{1,d}\beta<\frac{1}{2}, \quad \textrm{and}\quad \Vert\omega\Vert_{\mathrm{W}^{1,\infty}(\Omega)}\leq \hat{M}:=\min\{M,\widetilde{M}\}, 
	\end{equation} with $M$, $\hat{C}_{1,d}$ and $\widetilde{M}$ specified in \eqref{bound_M}, \eqref{existence_errors} and \eqref{assumption}, respectively. Then, there exists  $C_{C\textrm{\'e}a}>0$ such that
	\begin{align}\label{Cea_estimate}
\nonumber
	&\Vert \b t-\b t_h\Vert_{\mathbb{L}^2(\Omega)}+\Vert \vec{\b \sigma}-\vec{\b \sigma}_h\Vert_{\mathbf{X}}+\Vert \vec{\b u}-\vec{\b u}_h\Vert_{\mathbf{M}}+\Vert p-p_h\Vert_{\mathrm{H}^1(\Omega)}+\Vert \omega-\omega_h\Vert_{\mathrm{H}^1(\Omega)}\\
	&\quad \leq C_{C\textrm{\'e}a}\left(\mathrm{dist}(\b t,\mathbb{H}^{\b t}_h)+\mathrm{dist}(\vec{\b \sigma},\mathbb{H}^{\vec{\b \sigma}}_h)+\mathrm{dist}(\vec{\b u},\mathbf{H}^{\vec{\b u}}_h)+\mathrm{dist}(p,\mathrm{H}^{p}_h)+\mathrm{dist}(\omega,\mathrm{H}^{\omega}_h)\right).
	\end{align}
\end{theorem}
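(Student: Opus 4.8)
The plan is to exploit the fixed-point structure exactly as in the continuous analysis, splitting the global error into the contribution of the perturbed twofold saddle-point block (the variables $\bt,\vec{\b\sigma},\vec{\bu},p$ governed by \eqref{VF-poro_1}--\eqref{VF-poro_4}) and that of the nonlinear diffusion block (the variable $\omega$ governed by \eqref{VF_diff}). For frozen coupling data each block is a linear problem whose discrete well-posedness is already secured (ellipticity of $A$, semi-definiteness of $C$, the discrete inf-sup conditions \eqref{inf_sup_B_h} and \eqref{inf_sup_B1_h}, and the ellipticity \eqref{ellipticity_A} of $\mathscr{A}_{\b\sigma}$). First I would derive an abstract C\'ea/Strang bound for each block separately, and then couple the two bounds, absorbing the cross terms by means of the smallness hypotheses in \eqref{assumption_theorem}.

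\textbf{Poroelastic block.} Writing $\mathbf{S}(\omega)$ and $\mathbf{S}_h(\omega_h)$ for the continuous and discrete solutions of the twofold saddle-point problem, I would insert and subtract $\mathbf{S}_h(\omega)$ and use the triangle inequality,
\[
\Vert\mathbf{S}(\omega)-\mathbf{S}_h(\omega_h)\Vert\le \Vert\mathbf{S}(\omega)-\mathbf{S}_h(\omega)\Vert+\Vert\mathbf{S}_h(\omega)-\mathbf{S}_h(\omega_h)\Vert.
\]
The first term is a genuine Galerkin error for a linear perturbed saddle-point system; invoking the Strang-type estimate associated with \cite[Theorem 3.1]{gatica03} (whose hypotheses hold by the discrete inf-sup conditions and the $\lambda_s$-robust stability established in the proof of Theorem \ref{main_discrete}) bounds it by $C_{C\textrm{\'e}a}\bigl(\mathrm{dist}(\bt,\mathbb{H}^{\bt}_h)+\mathrm{dist}(\vec{\b\sigma},\mathbf{X}_h)+\mathrm{dist}(\vec{\bu},\mathbf{M}_h)+\mathrm{dist}(p,\mathrm{H}^p_h)\bigr)$, the data functional $H_\omega$ being unchanged in this term. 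The second term is controlled by the discrete Lipschitz continuity of $\mathbf{S}_h$, i.e.\ the discrete analogue of \eqref{bound_LC_S}, giving a contribution $\lesssim C_{\mathbf{S}_d}\beta\Vert\omega-\omega_h\Vert_{\mathrm{L}^2(\Omega)}$. In particular the stress error obeys $\Vert\vec{\b\sigma}-\vec{\b\sigma}_h\Vert_{\mathbf{X}}\le C_{C\textrm{\'e}a}(\text{poro distances})+C_{\mathbf{S}_d}\beta\Vert\omega-\omega_h\Vert_{\mathrm{L}^2(\Omega)}$.

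\textbf{Diffusion block.} For $\omega=\widetilde{\mathbf{S}}(\b\sigma)$ and $\omega_h=\widetilde{\mathbf{S}}_h(\b\sigma_h)$ I would proceed analogously, writing $\Vert\omega-\omega_h\Vert_{\mathrm{H}^1(\Omega)}\le\Vert\widetilde{\mathbf{S}}(\b\sigma)-\widetilde{\mathbf{S}}_h(\b\sigma)\Vert+\Vert\widetilde{\mathbf{S}}_h(\b\sigma)-\widetilde{\mathbf{S}}_h(\b\sigma_h)\Vert$. The first term is the C\'ea error for the linear elliptic problem with fixed diffusivity $D(\b\sigma)$, bounded using the ellipticity \eqref{ellipticity_A} and boundedness of $\mathscr{A}_{\b\sigma}$ by $C\,\mathrm{dist}(\omega,\mathrm{H}^\omega_h)$; the second is the (discrete) Lipschitz estimate of Lemma \ref{Lipschitz_S_WS}, namely $\lesssim\frac{d_3}{\widetilde{\alpha}}\Vert\b\sigma-\b\sigma_h\Vert_{\mathbb{L}^2(\Omega)}\Vert\omega\Vert_{W^{1,\infty}(\Omega)}$. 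This is exactly the step where the Lipschitz continuity \eqref{prop-D} of $D$ and the regularity bound $\Vert\omega\Vert_{W^{1,\infty}(\Omega)}\le\hat M$ are indispensable, the $W^{1,\infty}$-control being what makes the product term tractable.

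\textbf{Coupling, absorption, and the main obstacle.} The two blocks now produce the coupled system
\begin{align*}
\Vert\omega-\omega_h\Vert_{\mathrm{H}^1(\Omega)}&\le C\,\mathrm{dist}(\omega,\mathrm{H}^\omega_h)+\tfrac{d_3}{\widetilde\alpha}\hat M\,\Vert\vec{\b\sigma}-\vec{\b\sigma}_h\Vert_{\mathbf{X}},\\
\Vert\vec{\b\sigma}-\vec{\b\sigma}_h\Vert_{\mathbf{X}}&\le C_{C\textrm{\'e}a}\,(\text{poro distances})+C_{\mathbf{S}_d}\beta\,\Vert\omega-\omega_h\Vert_{\mathrm{L}^2(\Omega)}.
\end{align*}
Substituting the second inequality into the first, bounding $\Vert\omega-\omega_h\Vert_{\mathrm{L}^2(\Omega)}\le\Vert\omega-\omega_h\Vert_{\mathrm{H}^1(\Omega)}$, and using the smallness conditions of \eqref{assumption_theorem} (so that $\tfrac{d_3}{\widetilde\alpha}\hat M\,C_{\mathbf{S}_d}\beta<1$, which follows from $\hat M\le M<\widetilde\alpha/(d_3\beta C_{\mathbf{S}})$ together with $\hat{C}_{1,d}\beta<\tfrac12$), I can absorb the $\Vert\omega-\omega_h\Vert_{\mathrm{H}^1(\Omega)}$ term on the right into the left-hand side. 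This yields a bound on $\Vert\omega-\omega_h\Vert_{\mathrm{H}^1(\Omega)}$ purely in terms of the best-approximation distances; feeding it back into the poroelastic estimate controls the remaining unknowns, and collecting everything gives \eqref{Cea_estimate} with $C_{C\textrm{\'e}a}$ independent of $h$ and $\lambda_s$. The main obstacle, typical of a fully discrete nonlinear coupling, is that no single linear C\'ea lemma applies directly: both the data of the poro block (through $\omega\to\omega_h$) and the operator of the diffusion block (through $D(\b\sigma)\to D(\b\sigma_h)$) vary simultaneously, so the delicate point is the careful bookkeeping of the Lipschitz constants so that the coupled inequalities can be decoupled by absorption, while keeping every intermediate constant $\lambda_s$-robust as already ensured in Lemmas \ref{Lipschitz_S_WS} and \ref{lemma_main}.
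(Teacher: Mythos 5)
Your overall strategy (operator splitting via triangle inequalities through the fixed-point maps, then decoupling the two resulting inequalities by absorption under the smallness hypotheses) is a genuinely different organisation from the paper, which instead works monolithically: it decomposes each error as $\xi+\chi$ with $\xi$ an approximation error and $\chi$ a discrete function, derives from Galerkin orthogonality a discrete system for the $\chi$-components whose right-hand sides involve the $\xi$-components plus the coupling terms $H_{\omega-\omega_h}$ and $\mathscr{A}_{\bsigma_h}(\omega)-\mathscr{A}_{\bsigma}(\omega)$, and then applies the discrete stability estimate \eqref{existence_errors}. Your poroelastic block is sound: $\mathbf{S}_h(\omega)$ is well defined, the term $\Vert\mathbf{S}(\omega)-\mathbf{S}_h(\omega)\Vert$ is a conforming Galerkin error for a linear problem, and $\Vert\mathbf{S}_h(\omega)-\mathbf{S}_h(\omega_h)\Vert$ is controlled by discrete stability applied to the data $H_{\omega-\omega_h}$, with no regularity needed. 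The gap is in the diffusion block. In your splitting $\Vert\widetilde{\mathbf{S}}(\bsigma)-\widetilde{\mathbf{S}}_h(\bsigma)\Vert+\Vert\widetilde{\mathbf{S}}_h(\bsigma)-\widetilde{\mathbf{S}}_h(\bsigma_h)\Vert$, the second term compares two \emph{discrete} solutions with different diffusivities. Repeating the argument of Lemma \ref{Lipschitz_S_WS} there gives
\begin{equation*}
\widetilde{\alpha}\,\Vert\widetilde{\mathbf{S}}_h(\bsigma)-\widetilde{\mathbf{S}}_h(\bsigma_h)\Vert^2_{\mathrm{H}^1(\Omega)}
\leq \int_{\Omega}\bigl(D(\bsigma_h)-D(\bsigma)\bigr)\nabla \widetilde{\mathbf{S}}_h(\bsigma_h)\cdot\nabla\bigl(\widetilde{\mathbf{S}}_h(\bsigma)-\widetilde{\mathbf{S}}_h(\bsigma_h)\bigr),
\end{equation*}
so the $W^{1,\infty}$-weight necessarily lands on a discrete solution $\widetilde{\mathbf{S}}_h(\cdot)$ (this is exactly the structure of Lemma \ref{lemma:LC_Th}, where the weight is $\Vert\mathbf{T}_h(\omega_{2,h})\Vert_{W^{1,\infty}(\Omega)}$), \emph{not} on the continuous solution $\omega$ as you claim. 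The hypothesis \eqref{assumption_theorem} only controls $\Vert\omega\Vert_{W^{1,\infty}(\Omega)}$; there is no assumed bound on the $W^{1,\infty}$-norm of any discrete function, and obtaining one would require inverse estimates and extra mesh hypotheses. So, as written, this step fails.

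The fix is a different placement of the intermediate function: insert $\widetilde{\mathbf{S}}(\bsigma_h)$ rather than $\widetilde{\mathbf{S}}_h(\bsigma)$, i.e.\ write $\Vert\omega-\omega_h\Vert_{\mathrm{H}^1(\Omega)}\leq\Vert\widetilde{\mathbf{S}}(\bsigma)-\widetilde{\mathbf{S}}(\bsigma_h)\Vert+\Vert\widetilde{\mathbf{S}}(\bsigma_h)-\widetilde{\mathbf{S}}_h(\bsigma_h)\Vert$. The first term is a continuous--continuous comparison, where by using the ellipticity of $\mathscr{A}_{\bsigma_h}$ (rather than of $\mathscr{A}_{\bsigma}$) the $W^{1,\infty}$-weight can be placed on $\widetilde{\mathbf{S}}(\bsigma)=\omega$; the second is a plain C\'ea estimate for the linear problem with fixed coefficient $D(\bsigma_h)$, whose best-approximation term is then related back to $\mathrm{dist}(\omega,\mathrm{H}^\omega_h)$ by another application of the same Lipschitz bound. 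This is in effect what the paper achieves directly by arranging the error equation so that the perturbation is $\mathscr{A}_{\bsigma_h}(\omega)-\mathscr{A}_{\bsigma}(\omega)$ acting on the \emph{continuous} $\omega$, yielding the bound $d_3\Vert\bsigma-\bsigma_h\Vert_{\mathbb{L}^2(\Omega)}\Vert\omega\Vert_{\mathrm{W}^{1,\infty}(\Omega)}$ in \eqref{error_J}. A secondary bookkeeping point: your absorption condition $\tfrac{d_3}{\widetilde{\alpha}}\hat{M}\,C_{\mathbf{S}_d}\beta<1$ involves the \emph{discrete} Lipschitz constant $C_{\mathbf{S}_d}$, whereas \eqref{bound_M} only constrains $d_3\beta C_{\mathbf{S}}M<\widetilde{\alpha}$ with the continuous constant $C_{\mathbf{S}}$; to conclude you must tie your constants to the ones actually appearing in \eqref{assumption_theorem}, namely $\hat{C}_{1,d}\beta<\tfrac12$ and $d_3\widetilde{M}\leq\tfrac12$, which is precisely how the paper splits the absorption of the two coupling terms in \eqref{existence_errors_final}.
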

\begin{proof}
For sake of notational convenience we define $\mathbf{e}_{\b t}=\b t- \b t_h, \, \mathbf{e}_{\vec{\b \sigma}}=\vec{\b \sigma}- \vec{\b \sigma}_h, \, \mathbf{e}_{\vec{\b u}}=\vec{\b u}- \vec{\b u}_h$, $\mathrm{e}_{p}=p- p_h,$ $\mathrm{e}_{\omega}=\omega- \omega_h$. As usual, for arbitrary $(\hat{\b r_h}, \hat{\vec{\b \tau}}_h,\hat{\vec{\b v}}_h,\hat{q}_h, \hat{\theta}_h)\in \mathbb{H}^{\b t}_h\times\mathbf{X}_h\times\mathbf{M}_h\times\mathrm{H}^p_h\times\mathrm{H}^\omega_h$, they are decomposed as 
\begin{equation}\label{decomposition_errors}
	\mathbf{e}_{{\b t}}=\b \xi_{{\b t}}+\b \chi_{{\b t}}, \quad \mathbf{e}_{\vec{\b \sigma}}=\b \xi_{\vec{\b \sigma}}+\b \chi_{\vec{\b \sigma}},\quad \mathbf{e}_{\vec{\b u}}=\b \xi_{\vec{\b u}}+\b \chi_{\vec{\b u}}, \quad \mathrm{e}_{{p}}=\xi_{{p}}+\chi_{{p}}, \quad \textrm{and}\quad  \mathrm{e}_{{\omega}}=\xi_{{\omega}}+\chi_{{\omega}},
\end{equation}
where
\begin{gather*}
	\b \xi_{{\b t}}:=\b t-\hat{\b r}_h\in \mathbb{L}^2(\Omega), \  \b \chi_{{\b t}}:= \hat{\b r}_h-\b t_h\in \mathbb{H}^{\b t}_h,\ 
\b \xi_{\vec{\b \sigma}}:=\vec{\b \sigma}-\hat{\vec{\b \tau}}_h\in \mathbf{X}, \  \b \chi_{\vec{\b \sigma}}:= \hat{\vec{\b \tau}}_h-\vec{\b \sigma}_h\in \mathbf{X}_h,\ \b \chi_{\vec{\b u}}:= \hat{\vec{\b v}}_h-\vec{\b u}_h\in \mathbf{M}_h,\nonumber
\\
\xi_{{p}}:=p-\hat{q}_h\in \mathrm{H}^1_\Sigma(\Omega), \quad \chi_{{p}}:= \hat{q}_h-p_h\in \mathrm{H}^{p}_h, \quad 
\xi_{\omega}:=\omega-\hat{\theta}_h\in \mathrm{H}^1_\Gamma(\Omega), \quad \chi_{\omega}:= \hat{\theta}_h-\omega_h\in \mathrm{H}^{\omega}_h.\nonumber
\end{gather*}
Therefore, by subtracting  \eqref{VFh-poro_1}-\eqref{VFh_diff} and  \eqref{VF-poro_1}-\eqref{VF_diff}, we easily get the classical Galerkin orthogonality,
which together with the decompositions \eqref{decomposition_errors}, implies that
	\begin{alignat}{10}
	&[A({\b \chi}_{\b t}), \b r_h]&\,+\,&[B^*({\b \chi}_{\vec{\b \sigma}}),\b r_h]&&&&&\;=\;& [H^h+H_{\omega-\omega_h},\b r_h],\nonumber\\
	&[B({\b \chi}_{\b t}), \vec{\b \tau}_h]&\,-\,& [C({\b \chi}_{\vec{\b \sigma}}),\vec{\b \tau}_h]&\,+\,&[B_1^*({\b \chi}_{\vec{\b u}}),\vec{\b \tau}_h]&\,+\,&[B_2^*({\chi}_{p}),\vec{\b \tau}_h]&\;=\;&[\widetilde{F}^h_1,\vec{\b \tau}_h],\nonumber\\
	& &&[B_1({\b \chi}_{\vec{\b \sigma}}),\vec{\b v}_h]&&&&&\;=\;&[F^h,\vec{\b v}_h],\label{system_errors}\\
	&&&[B_2({\b \chi}_{\vec{\b \sigma}}),q_h]&&&\,-\,&[D(\chi_{p}),q_h]&\;=\;& [G^h,p_h],\nonumber\\
	&&&&&&&[\mathscr{A}_{\b \sigma_h}(\chi_{\omega}),\theta_h]&\;=\;& [J^h+\mathscr{A}_{\b \sigma_h}(\omega)-\mathscr{A}_{\b \sigma}(\omega),\theta_h],\nonumber
	\end{alignat}
for all $\b r_h\in \mathbb{H}^{\b t}_h, \vec{\b \tau}_h \in \mathbf{X}_h, \vec{\b v}_h \in \mathbf{M}_h, p_h \in \mathrm{H}^{p}_h,$ and $\theta_h \in \mathrm{H}^{\omega}_h,$ and where
\begin{gather*}
	[H^h,\b r_h]:=-[A({\b \xi}_{\b t}), \b r_h]-[B^*({\b \xi}_{\vec{\b \sigma}}),\b r_h], \quad [{F}^h,\vec{\b v}]:=-[B_1({\b \xi}_{\vec{\b \sigma}}),\vec{\b v}_h],\quad
[G^h,p_h]:=-[B_2({\b \xi}_{\vec{\b \sigma}}),q_h]+[D(\xi_{p}),q_h], \\ [J^h,\theta_h]:=-[\mathscr{A}_{\b \sigma_h}(\xi_{p}),\theta_h],\quad
	[\widetilde{F}^h_1,\vec{\b \tau}]:=-[B({\b \xi}_{\b t}), \vec{\b \tau}_h]+ [C({\b \xi}_{\vec{\b \sigma}}),\vec{\b \tau}_h]-[B_1^*({\b \xi}_{\vec{\b u}}),\vec{\b \tau}_h]-[B_2^*({\xi}_{p}),\vec{\b \tau}_h].
\end{gather*}
 Then,  proceeding as in the proof of Theorem \ref{main_discrete} but now for problem \eqref{system_errors}, we deduce that there exists $\hat{C}_{1,d}>0$, independent of $\lambda_s$ and $h$, such that
 \begin{align}\label{existence_errors}
 \nonumber
 	&\Vert \b \chi_{\b t}\Vert_{\mathbb{L}^2(\Omega)}+\Vert \b \chi_{\vec{\b \sigma}}\Vert_{\mathbf{X}}+\Vert \b \chi_{\vec{\b u}}\Vert_{\mathbf{M}}+\Vert \chi_{p}\Vert_{\mathrm{H}^1(\Omega)}+\Vert \chi_{\omega}\Vert_{\mathrm{H}^1(\Omega)}\\
 	& \;\;\leq \hat{C}_{1,d}\left(\Vert H^h+ H_{\omega-\omega_h}\Vert_{(\mathbb{H}^{\b t})'}+\Vert \widetilde{F}^h_1\Vert_{\mathbf{X}_h'}+ \Vert F^h\Vert_{\mathbf{M}_h'}+\Vert G^h\Vert_{(\mathrm{H}^{p}_{h})'}+\Vert J^h+\mathscr{A}_{\b \sigma_h}(\omega)-\mathscr{A}_{\b \sigma}(\omega)\Vert_{(\mathrm{H}^\omega_h)'}\right).
 \end{align} 
 In this way, we now proceed to bound the terms on the right-hand side of \eqref{existence_errors}. We begin by noticing thanks to the continuity of the operators $A$ and $B^*$, and the definition of $H_{\omega-\omega_h}$ (cf. \eqref{def-A-H}) that there holds
 \begin{equation}\label{error_H}
\Vert H^h+ H_{\omega-\omega_h}\Vert_{(\mathbb{H}^{\b t})'}\leq \hat{C}_1\left(\Vert\b \xi_{\b t}\Vert_{\mathbb{L}^2(\Omega)} +\Vert \b \xi_{\vec{\b \sigma}}\Vert_{\mathbf{X}}\right)+\beta\Vert \mathrm{e}_\omega\Vert_{\mathrm{H}^1(\Omega)},
 \end{equation}
 with $\hat{C}_1$ independent of $\lambda_s$ and $h$. Proceeding in a similar way as before, it can be deduced that
 \begin{align}\label{error_F_G}
 \nonumber
 	\Vert F^h\Vert_{\mathbf{M}_h'}&\leq\hat{C}_2\Vert \b \xi_{\vec{\b \sigma}}\Vert_{\mathbf{X}},\qquad \Vert G^h\Vert_{(\mathrm{H}^{p}_{h})'}\leq \hat{C}_3\left(\frac{\alpha}{\lambda_s}\Vert \b \xi_{\vec{\b \sigma}}\Vert_{\mathbf{X}}+\Vert\xi_p\Vert_{\mathrm{H}^1(\Omega)}\right),\\
 	\Vert \widetilde{F}^h_1\Vert_{\mathbf{X}_h'}&\leq \hat{C}_4\left(\Vert \b \xi_{\b t}\Vert_{\mathbb{L}^2(\Omega)}+\frac{1}{\lambda_s}\Vert \b \xi_{\vec{\b \sigma}}\Vert_{\mathbf{X}}+\Vert \b \xi_{\vec{\b u}}\Vert_{\mathbf{M}}+\frac{\alpha}{\lambda_s}\Vert\xi_{p}\Vert_{\mathrm{H}^1(\Omega)}\right),
 \end{align} 
where $\hat{C}_2, \hat{C}_3, \hat{C}_4>0$ are independent of $\lambda_s$ and $h$. On the other hand, in order to bound the last term on the right-hand side of \eqref{existence_errors}, we proceed as in \eqref{LC_omega} to get 
\[\Vert \mathscr{A}_{\b \sigma_h}(\omega)-\mathscr{A}_{\b \sigma}(\omega)\Vert_{(\mathrm{H}^\omega_h)'}\leq d_3\Vert\b \sigma-\b \sigma_h\Vert_{\mathbb{L}^2(\Omega)}\Vert \omega\Vert_{\mathrm{W}^{1,\infty}(\Omega)},\]
 and then, from the latter, we conclude that 
 \begin{equation}\label{error_J}
 	\Vert J^h+\mathscr{A}_{\b \sigma_h}(\omega)-\mathscr{A}_{\b \sigma}(\omega)\Vert_{(\mathrm{H}^\omega_h)'}\leq \hat{C}_5\Vert \xi_\omega\Vert_{\mathrm{H}^1(\Omega)}+d_3\Vert \mathbf{e}_{\vec{\b \sigma}}\Vert_{\mathbf{X}}\Vert \omega\Vert_{\mathrm{W}^{1,\infty}(\Omega)},
 \end{equation}
 with $\hat{C}_5$ independent of $\lambda_s$ and $h$. Therefore, from \eqref{error_H}, \eqref{error_F_G} and $\eqref{error_J}$, we obtain that there exists $\hat{C}_{2,d}>0$ independent of $\lambda_s$ and $h$, such that
  \begin{align}\label{existence_errors_final}
 \nonumber
 &\Vert \b \chi_{\b t}\Vert_{\mathbb{L}^2(\Omega)}+\Vert \b \chi_{\vec{\b \sigma}}\Vert_{\mathbf{X}}+\Vert \b \chi_{\vec{\b u}}\Vert_{\mathbf{M}}+\Vert \chi_{p}\Vert_{\mathrm{H}^1(\Omega)}+\Vert \chi_{\omega}\Vert_{\mathrm{H}^1(\Omega)}\\
 \nonumber 
 & \;\;\leq \hat{C}_{2,d}\left(\Vert \b \xi_{\b t}\Vert_{\mathbb{L}^2(\Omega)}+\left(1+\frac{1}{\lambda_s}+\frac{\alpha}{\lambda_s}\right)\Vert \b \xi_{\vec{\b \sigma}}\Vert_{\mathbf{X}}+\Vert \b \xi_{\vec{\b u}}\Vert_{\mathbf{M}}+\left(1+\frac{\alpha}{\lambda_s}\right)\Vert\xi_{p}\Vert_{\mathrm{H}^1(\Omega)}+\Vert \xi_\omega\Vert_{\mathrm{H}^1(\Omega)}\right)\\
 &\;\;\quad  +d_3\Vert \mathbf{e}_{\vec{\b \sigma}}\Vert_{\mathbf{X}}\widetilde{M}+\hat{C}_{1,d}\beta\Vert \mathrm{e}_\omega\Vert_{\mathrm{H}^1(\Omega)},
 \end{align} 
 with $\Vert \omega\Vert_{\mathrm{W}^{1,\infty}(\Omega)}\leq \widetilde{M}$, and where $\left(1+\frac{1}{\lambda_s}+\frac{\alpha}{\lambda_s}\right)$ and $\left(1+\frac{\alpha}{\lambda_s}\right)$ can be seen as constants independent of $\lambda_s$ if $\lambda_s\rightarrow \infty$. In this way, choosing $\widetilde{M}$ such that 
 \begin{equation}\label{assumption}
 	\widetilde{M}\leq \frac{1}{2d_3}, 
 \end{equation} 
 the desired result follows simply by   triangle's inequality in \eqref{decomposition_errors}, the estimation \eqref{existence_errors_final}, and  assumption \eqref{assumption_theorem}.
\end{proof}
 The main result of this section is given by the following lemma.
\begin{theorem}\label{approximation}
	In addition to the hypotheses of Theorems $\mathrm{\ref{theorem:solvability_continuous}}$, $\mathrm{\ref{main_discrete}}$ and $\mathrm{\ref{theorem:a_priori}}$, assume that there exist $s>0$ and $l>1/2$ such that $\b \sigma\in \mathbb{H}^s(\Omega)$, $\mathbf{div}\,\b \sigma\in \bH^s(\Omega), \,\b u \in \bH^s(\Omega), \,\b \gamma\in \mathbb{H}^s(\Omega), \,\widetilde{p}\in \mathrm{H}^s(\Omega),\, p\in \mathrm{H}^{1+s}(\Omega),\, \omega\in \mathrm{H}^{1+s}(\Omega),$ and $\b t\in \mathbb{H}^l(\Omega)$. Then, there exist $\widetilde{C}_1, \widetilde{C}_2>0$, independent of $h$, such that, with the finite element subspaces defined by $\mathrm{(\ref{discrete-spaces-elasticity-AFW})}$, $\mathrm{(\ref{FEM_T1})}$, $\mathrm{(\ref{FEM_PT1})}$, $\mathrm{(\ref{FE_pressure})},$ and $\mathrm{(\ref{FE_concentration})},$ there holds
	\begin{align}
	\begin{split}\label{rates}
	&\Vert \b t-\b t_h\Vert_{\mathbb{L}^2(\Omega)}+\Vert \vec{\b \sigma}-\vec{\b \sigma}_h\Vert_{\mathbf{X}}+\Vert \vec{\b u}-\vec{\b u}_h\Vert_{\mathbf{M}}+\Vert p-p_h\Vert_{\mathrm{H}^1(\Omega)}+\Vert \omega-\omega_h\Vert_{\mathrm{H}^1(\Omega)} \\
	&\qquad \le\widetilde{C}_1h^{\mathrm{min}\{s,k+1\}}\,
	\left\{\norm{\b \sigma}_{\mathbb{H}^s(\Omega)}+ \,\, \norm{\mathbf{div}\,\b \sigma}_{s,\Omega} \, + \, \norm{\b u}_{s,\Omega}+\norm{\b \gamma}_{\mathbb{H}^s(\Omega)}
	+\Vert\widetilde{p}\Vert_{\mathrm{H}^s(\Omega)}
	\right.\\
	&\qquad\quad \left. +\,\Vert p \Vert_{\mathrm{H}^s(\Omega)}+\norm{\omega}_{\mathrm{H}^{s+1}(\Omega)}\right\}+\widetilde{C}_2h^{\mathrm{min}\{l,k+1\}}\norm{\b t}_{\mathbb{H}^l(\Omega)}.
	\end{split}
	\end{align}
\end{theorem}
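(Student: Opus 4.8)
The plan is to combine the C\'ea estimate \eqref{Cea_estimate} of Theorem~\ref{theorem:a_priori} with standard approximation properties of the finite element subspaces, so that what remains is purely a matter of interpolation theory. Since \eqref{Cea_estimate} already controls the total error by the sum of best-approximation distances $\mathrm{dist}(\bt,\mathbb{H}^{\bt}_h)$, $\mathrm{dist}(\vec{\bsigma},\mathbf{X}_h)$, $\mathrm{dist}(\vec{\bu},\mathbf{M}_h)$, $\mathrm{dist}(p,\mathrm{H}^p_h)$ and $\mathrm{dist}(\omega,\mathrm{H}^\omega_h)$, it suffices to bound each distance by the asserted power of $h$ times the corresponding Sobolev norm, and then collect the results.

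First I would treat the stress/total-pressure pair. For the $\mathbb{H}(\mathbf{div},\Omega)$-conforming component $\bsigma$ I would invoke the classical AFW (BDM$_{k+1}$) interpolation operator, whose approximation error in the $\mathbb{H}(\mathbf{div},\Omega)$-norm is of order $h^{\min\{s,k+1\}}$ and is controlled by $\norm{\bsigma}_{\mathbb{H}^s(\Omega)}+\norm{\mathbf{div}\,\bsigma}_{s,\Omega}$; the presence of the divergence term is precisely why the additional regularity $\mathbf{div}\,\bsigma\in\bH^s(\Omega)$ is assumed. For the discontinuous total pressure $\widetilde{p}$ I would use the $\mathrm{L}^2$-orthogonal projection onto $\mathrm{H}^{\widetilde{p}}_h$, which yields order $h^{\min\{s,k+1\}}\Vert\widetilde{p}\Vert_{\mathrm{H}^s(\Omega)}$. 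Adding these two contributions bounds $\mathrm{dist}(\vec{\bsigma},\mathbf{X}_h)$.

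Next, for the displacement $\bu$ and the rotation $\bgamma$ (grouped in $\vec{\bu}$), both spaces consist of piecewise polynomials in $\mathbf{L}^2(\Omega)$, so I would apply the $\mathrm{L}^2$-projection estimate to obtain order $h^{\min\{s,k+1\}}(\norm{\bu}_{s,\Omega}+\norm{\bgamma}_{\mathbb{H}^s(\Omega)})$. For the fluid pressure $p\in\HS$ and the solute concentration $\omega\in\HG$, which are approximated by Lagrange elements of degree $k+1$ (cf. \eqref{FE_pressure}--\eqref{FE_concentration}), I would use the standard Lagrange interpolation estimate in the $\mathrm{H}^1(\Omega)$-norm, which requires the $H^{1+s}$ regularity stated in the hypotheses and produces order $h^{\min\{s,k+1\}}$. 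Finally, for the strain $\bt$, whose space \eqref{FEM_T1} is a non-conforming copy of the BDM space tested only in $\mathbb{L}^2(\Omega)$, I would use the associated $\mathrm{L}^2$-approximation property with the weaker regularity $\bt\in\mathbb{H}^l(\Omega)$, $l>1/2$, giving the separate term $h^{\min\{l,k+1\}}\norm{\bt}_{\mathbb{H}^l(\Omega)}$.

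The argument presents no genuine obstacle beyond careful bookkeeping: the only point requiring attention is that the stress distance must be estimated in the full $\mathbb{H}(\mathbf{div},\Omega)$-norm rather than merely in $\mathbb{L}^2(\Omega)$, which is what forces the extra $H^s$ assumption on $\mathbf{div}\,\bsigma$, and that the lower regularity of $\bt$ is what accounts for the distinct exponent $l$ in the final bound. Substituting all these interpolation estimates into \eqref{Cea_estimate} and using that the factors $(1+\tfrac{1}{\lambda_s}+\tfrac{\alpha}{\lambda_s})$ are understood as independent of $\lambda_s$ as $\lambda_s\to\infty$, one obtains \eqref{rates} with constants $\widetilde{C}_1,\widetilde{C}_2>0$ independent of $h$.
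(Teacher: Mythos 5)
Your proposal is correct and follows essentially the same route as the paper: the paper's proof is precisely the combination of the C\'ea estimate \eqref{Cea_estimate} (whose validity already encodes the smallness assumptions) with the standard approximation properties of the subspaces \eqref{discrete-spaces-elasticity-AFW}, \eqref{FEM_T1}, \eqref{FEM_PT1}, \eqref{FE_pressure}, and \eqref{FE_concentration}, which you simply spell out term by term. Your observations that the $\mathbb{H}(\mathbf{div},\Omega)$-norm approximation of $\bsigma$ is what necessitates the assumption $\mathbf{div}\,\bsigma\in\bH^s(\Omega)$, and that the lower regularity $\bt\in\mathbb{H}^l(\Omega)$ accounts for the separate exponent $\min\{l,k+1\}$, are exactly the bookkeeping the paper leaves implicit.
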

\begin{proof}
It follows as a combination of the approximation properties of the spaces (\ref{discrete-spaces-elasticity-AFW}), (\ref{FEM_T1}), (\ref{FEM_PT1}), (\ref{FE_pressure}), and (\ref{FE_concentration}) (see, e.g., \cite{boffi13, gatica14}), the C\'ea estimate \eqref{Cea_estimate} and the smallness assumption. 
\end{proof}
\begin{remark}
Similar results are obtained when   \eqref{peers} and \eqref{FEM_T} are employed instead of \eqref{discrete-spaces-elasticity-AFW} and \eqref{FEM_T1}, respectively. For the lowest-order case, we notice from the classical approximation properties of the finite element subspaces, that it suffices to consider $l>0$, and therefore, the last term in \eqref{rates} should be changed by $\widetilde{C}_1h^{\mathrm{min}\{s,k+1\}}\sum_{K\in \mathcal{T}_h }\norm{\b t}_{\mathbb{H}^s(K)}$ (see \cite[Theorem 3.2]{gatica06}), whereas the other terms remain unchanged. Even if approximation results for   \eqref{FEM_T} in the high-order case do not seem to be available in the literature, the numerical results in the next section demonstrate an asymptotic  $\mathcal{O}(h^{k+1})$ convergence  (see Table \ref{table:h}).
\end{remark}

\section{Numerical tests} \label{sec:results}
\subsection{Example 1: Convergence against manufactured solutions}
The accuracy of the  discretisation is verified through two convergence tests using manufactured solutions, and implemented using the FEniCS library \cite{logg12}. The model parameters are simply taken as   $\mu_s = \lambda_s = 1 = c_0 = \kappa = \alpha = \ell = \rho_f = \rho_s = \mu_f =  \beta = 1$, and $\phi = 0.5$, and $\gg=\cero$. Variations (of several orders of magnitude) in $\lambda_s, \kappa, \alpha, \mu_f$  will be also considered to study the robustness of the formulation with respect to these parameters. The stress-altered diffusion is \eqref{eq:D0} with $D_0 = 0.01$ and $\eta = 0.01$. We use the following closed-form smooth solutions to  \eqref{eq:model} 
\begin{gather*}
\omega = e^{x_1}+\cos(\pi x_1)\cos(\pi x_2),\ 	\bu = \frac{1}{10}\begin{pmatrix}
	-\cos(x_1)\sin(x_2)+\frac{x_1^2}{\lambda_s}\\
	\sin(x_1)\cos(x_2)+\frac{x_2^2}{\lambda_s}
	\end{pmatrix}, \  
	p = \sin(\pi x_1)\sin(\pi x_2), \   \widetilde{p} = \alpha p-\lambda_s\vdiv\bu, 
\end{gather*}
which are used to produce non-homogeneous forcing and source terms. For sake of simplicity only one type of boundary conditions is considered, i.e., $\partial\Omega = \Gamma$.
The remaining boundary conditions in \eqref{eq:tildep} (for displacement and fluid flux) are imposed naturally. This also implies that the stress is not uniquely defined and we constraint its trace, using a real Lagrange multiplier.  
We generate seven successively refined meshes (congruent right-angled triangular partitions) for the domain $\Omega =(0,1)^2$ and calculate errors between approximate and exact solutions $e(\cdot)$  (measured in $\rH^1$-norm for tracer and fluid pressure, in the tensor $\mathbf{H}(\vdiv)$-norm for stress, and in the tensorial, vectorial and scalar $\rL^2$-norm for strain, rotation, displacement and total pressure). The mixed finite element methods are defined by the PEERS$_{k}$ and enriched piecewise polynomial spaces specified in  \eqref{peers},\eqref{FEM_T},\eqref{FE_pressure}.

\begin{table}[t!]
	\setlength{\tabcolsep}{2.5pt}
	\begin{center}  
	{\small \begin{tabular}{|ccccccccccccccc|}
	\hline 	
	DoF  & $e_0(\bt)$ & rate & $e_{\bdiv}(\bsigma)$ & rate & $e_1(\tilde{p})$ & rate  & $e_0(\bu)$ & rate & $e_0(\bgamma)$ & rate & $e_1(p)$ & rate & $e_1(\omega)$ & rate\\ 
	\hline
	\hline
\multicolumn{15}{|c|}{scheme with $k=0$}\\
				\hline
		   148 & 1.9e-01 & -- & 1.2e+0 & -- & 2.9e-01 & -- & 2.8e-02 & -- & 3.3e-01 & -- & 1.1e+0 & -- & 1.6e+0 & -- \\
   540 & 8.7e-02 & 1.15 & 6.2e-01 & 0.99 & 1.4e-01 & 1.03 & 9.0e-03 & 1.64 & 9.8e-02 & 1.76 & 7.4e-01 & 0.62 & 8.9e-01 & 0.87 \\
  2068 & 4.2e-02 & 1.05 & 3.1e-01 & 1.00 & 6.7e-02 & 1.06 & 3.9e-03 & 1.19 & 3.9e-02 & 1.41 & 4.1e-01 & 0.85 & 4.6e-01 & 0.97 \\
  8100 & 2.1e-02 & 1.03 & 1.6e-01 & 1.00 & 3.3e-02 & 1.03 & 1.9e-03 & 1.06 & 1.8e-02 & 1.35 & 2.1e-01 & 0.95 & 2.3e-01 & 0.99 \\
 32068 & 1.0e-02 & 1.02 & 7.8e-02 & 1.00 & 1.6e-02 & 1.01 & 9.4e-04 & 1.01 & 7.8e-03 & 1.18 & 1.1e-01 & 0.98 & 1.2e-01 & 1.00 \\
127620 & 5.0e-03 & 1.01 & 3.9e-02 & 1.00 & 8.2e-03 & 1.00 & 4.7e-04 & 1.00 & 3.7e-03 & 1.10 & 5.4e-02 & 1.00 & 5.8e-02 & 1.00	\\	
509188 & 2.5e-03 & 1.01 & 1.9e-02 & 1.00 & 4.1e-03 & 1.00 & 2.3e-04 & 1.00 & 1.5e-03 & 1.20 & 2.7e-02 & 1.00 & 2.9e-02 & 1.00 \\
			\hline
\multicolumn{15}{|c|}{scheme with $k=1$}\\
				\hline	
   436 & 1.3e-02 & -- & 2.3e-01 & -- & 7.7e-02 & -- & 1.8e-03 & -- & 6.9e-03 & -- & 3.8e-01 & -- & 4.8e-01 & --\\ 
  1652 & 4.6e-03 & 1.51 & 5.8e-02 & 1.97 & 2.0e-02 & 1.95 & 4.4e-04 & 2.02 & 2.4e-03 & 1.35 & 1.2e-01 & 1.70 & 1.3e-01 & 1.88 \\
  6436 & 1.3e-03 & 1.76 & 1.5e-02 & 1.99 & 5.0e-03 & 1.99 & 1.0e-04 & 2.07 & 8.7e-04 & 1.44 & 3.2e-02 & 1.88 & 3.3e-02 & 1.96 \\
 25412 & 3.6e-04 & 1.91 & 3.7e-03 & 2.00 & 1.2e-03 & 2.00 & 2.5e-05 & 2.04 & 2.5e-04 & 1.78 & 8.2e-03 & 1.95 & 8.4e-03 & 1.99 \\
100996 & 9.2e-05 & 1.96 & 9.2e-04 & 2.00 & 3.1e-04 & 2.00 & 6.3e-06 & 2.01 & 6.7e-05 & 1.91 & 2.1e-03 & 1.98 & 2.1e-03 & 2.00 \\
402692 & 2.3e-05 & 1.98 & 2.3e-04 & 2.00 & 7.8e-05 & 2.00 & 1.6e-06 & 2.00 & 1.7e-05 & 1.96 & 5.2e-04 & 1.99 & 5.3e-04 & 2.00 \\
1608196 & 5.9e-06 & 1.99 & 5.7e-05 & 2.00 & 1.9e-05 & 2.00 & 3.9e-07 & 2.00 & 4.4e-06 & 1.98 & 1.3e-04 & 2.00 & 1.3e-04 & 2.00 \\
\hline
		\end{tabular}}
		\end{center}
		
		\smallskip
		\caption{Verification of convergence for the method with PEERS$_{k}$ and enriched piecewise polynomials with degrees $k=0$ and $k=1$, and using unity parameters. Errors and convergence rates are tabulated for strain, stress, total pressure, displacement, rotation Lagrange multiplier, fluid pressure, and tracer concentration.}\label{table:h}
	\end{table}

Table~\ref{table:h} shows such error decay for polynomial orders $k=0$ and $k=1$, and we can clearly observe a convergence of $O(h^{k+1})$ for all field variables in their natural norms, which is consistent with the theoretical error bounds. Table~\ref{table:3D} shows the error history associated with the extension to 3D, using the exact solutions 
\begin{gather*}
\omega = \cos(\pi x_1)\cos(\pi x_2)\cos(\pi x_3),\ 	\bu = \frac{1}{10}\begin{pmatrix}
	\sin(x_1)\cos(x_2)\cos(x_3)+\frac{x_1^2}{\lambda_s}\\
	-2\cos(x_1)\sin(x_2)\cos(x_3)+\frac{x_2^2}{\lambda_s}\\
	\cos(x_1)\cos(x_2)\sin(x_3)+\frac{x_3^2}{\lambda_s}
	\end{pmatrix}, \  
	p = \sin(\pi x_1)\sin(\pi x_2)\sin(\pi x_3),  
\end{gather*}
and $\widetilde{p} = \alpha p-\lambda_s\vdiv\bu$, on   $\Omega=(0,1)^3$, with the same unit parameters, and focusing on the lowest-order scheme. Optimal, first-order convergence is also achieved in this case. For completeness, we also verify the error decay of the scheme resulting from using AFW$_k$ elements \eqref{discrete-spaces-elasticity-AFW} and \eqref{FEM_T1}, which also delivers optimal convergence.

\begin{table}[t!]
	\setlength{\tabcolsep}{2.5pt}
	\begin{center}  
	{\small \begin{tabular}{|ccccccccccccccc|}
	\hline 	
	DoF  & $e_0(\bt)$ & rate & $e_{\bdiv}(\bsigma)$ & rate & $e_1(\tilde{p})$ & rate  & $e_0(\bu)$ & rate & $e_0(\bgamma)$ & rate & $e_1(p)$ & rate & $e_1(\omega)$ & rate\\ 
	\hline
		\hline
\multicolumn{15}{|c|}{scheme with PEERS$_{k}$ and enriched piecewise polynomials}\\
				\hline
  1984 & 1.7e-01 & --   & 1.5e+0 & 0.00 & 2.0e-01 & -- & 2.0e-02 & -- & 1.4e-01 & -- & 1.4e+0 & -- & 1.6e+0 & -- \\
  6477 & 9.8e-02 & 0.72 & 1.0e+0 & 0.87 & 1.5e-01 & 0.74 & 1.3e-02 & 0.99 & 8.3e-02 & 1.29 & 9.4e-01 & 0.63 & 1.2e+0 & 0.73 \\
 29281 & 6.4e-02 & 0.83 & 6.4e-01 & 0.95 & 8.9e-02 & 0.98 & 7.7e-03 & 1.08 & 2.7e-02 & 2.16 & 6.7e-01 & 0.76 & 7.7e-01 & 0.90 \\
168297 & 3.5e-02 & 0.92 & 3.6e-01 & 0.99 & 4.7e-02 & 1.09 & 4.1e-03 & 1.07 & 1.0e-02 & 1.71 & 4.1e-01 & 0.84 & 4.3e-01 & 0.98 \\
1125049 & 2.1e-02 & 0.95 & 1.9e-01 & 1.00 & 2.4e-02 & 1.06 & 2.1e-03 & 1.04 & 3.7e-03 & 1.59 & 2.2e-01 & 0.96 & 2.3e-01 & 1.00 \\
8194137 & 1.2e-02 & 0.96 & 9.6e-02 & 1.01 & 1.3e-02 & 1.02 & 1.1e-03 & 1.00 & 1.6e-03 & 1.49 & 1.2e-01 & 0.97 & 1.2e-01 & 0.98 \\
\hline
\multicolumn{15}{|c|}{scheme with AFW$_k$ elements}\\
	\hline
  2551 & 7.1e-02 & -- & 1.4e+0 & -- & 1.9e-01 & -- & 1.9e-02 & -- & 3.8e-02 & -- & 1.1e+0 & -- & 1.6e+0 & -- \\
  8067 & 4.3e-02 & 1.24 & 1.0e+0 & 0.90 & 1.4e-01 & 0.78 & 1.2e-02 & 1.04 & 2.2e-02 & 1.38 & 9.4e-01 & 0.43 & 1.2e+0 & 0.74\\ 
 35383 & 2.3e-02 & 1.23 & 6.1e-01 & 0.96 & 8.5e-02 & 0.99 & 7.4e-03 & 1.02 & 1.1e-02 & 1.35 & 6.7e-01 & 0.66 & 7.7e-01 & 0.91 \\
198831 & 1.0e-02 & 1.34 & 3.4e-01 & 0.99 & 4.6e-02 & 1.06 & 4.0e-03 & 1.02 & 5.0e-03 & 1.32 & 4.1e-01 & 0.84 & 4.3e-01 & 0.98 \\
1310431 & 4.6e-03 & 1.31 & 1.8e-01 & 1.00 & 2.4e-02 & 1.03 & 2.1e-03 & 1.01 & 2.3e-03 & 1.22 & 2.3e-01 & 0.94 & 2.3e-01 & 0.99 \\
				\hline
		\end{tabular}}
		\end{center}
		
		\smallskip
		\caption{Verification of space convergence   in 3D, with polynomial degree $k=0$, and using unity parameters. 
		Errors history for strain, stress, total pressure, displacement, rotation Lagrange multiplier, fluid pressure, and tracer concentration.}\label{table:3D}
	\end{table}

Figure~\ref{fig:h} illustrates  the robustness with respect to large variations in physical parameters including nearly incompressible materials (taking $\lambda_s=10^8$), for low permeability (with $\kappa = 10^{-12}$), for weak Biot-Willis coupling ($\alpha=10^{-6}$), and with small fluid viscosity (with $\mu_f=10^{-4}$). Analogous results (not shown here) were obtained for zero storage coefficient, and for different values of porosity.  
For all these tests (both 2D and 3D), in average, four Newton-Raphson iterations were required to reach convergence across all mesh refinements.

\begin{figure}[t!]
\begin{center}
\includegraphics[width=0.24\textwidth]{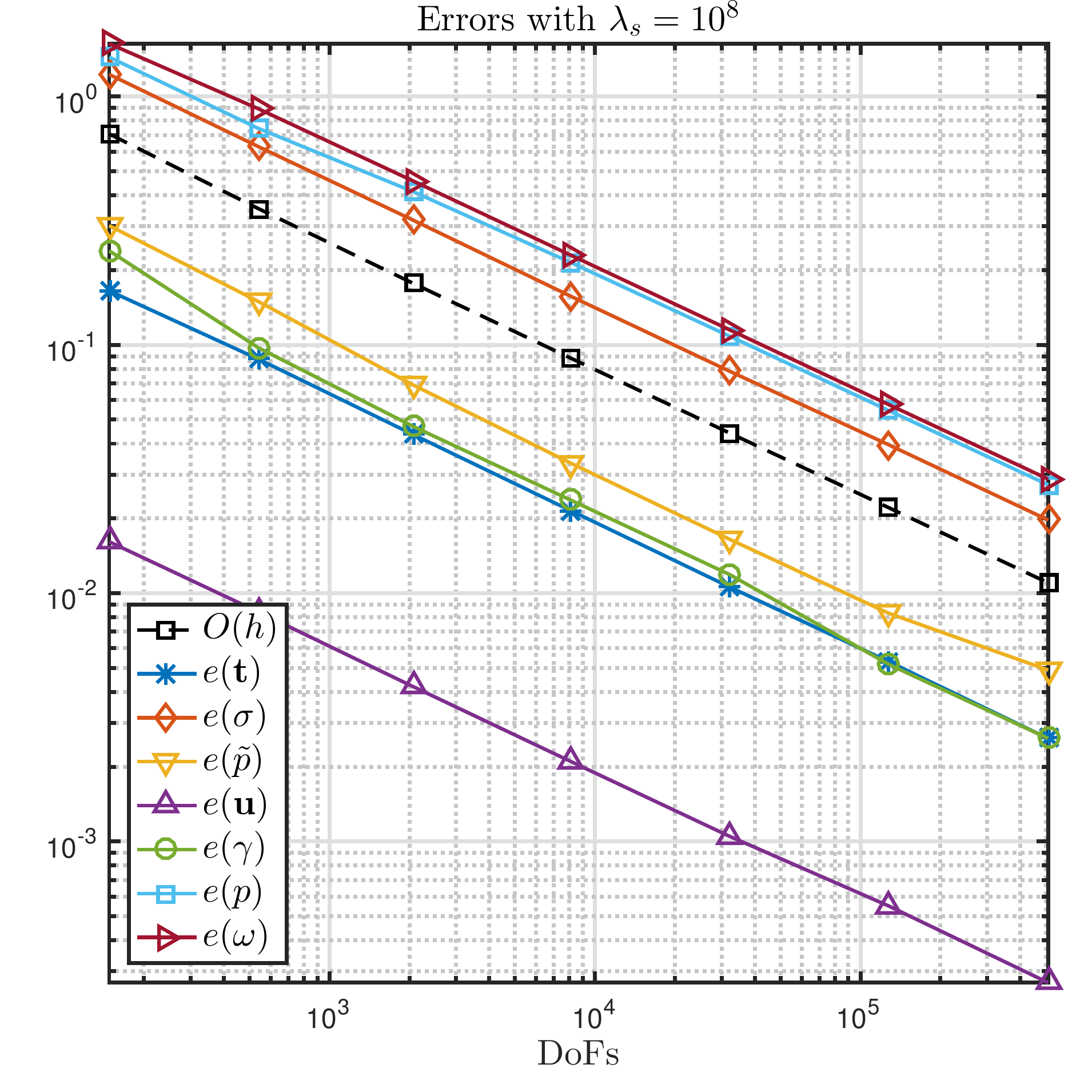}
\includegraphics[width=0.24\textwidth]{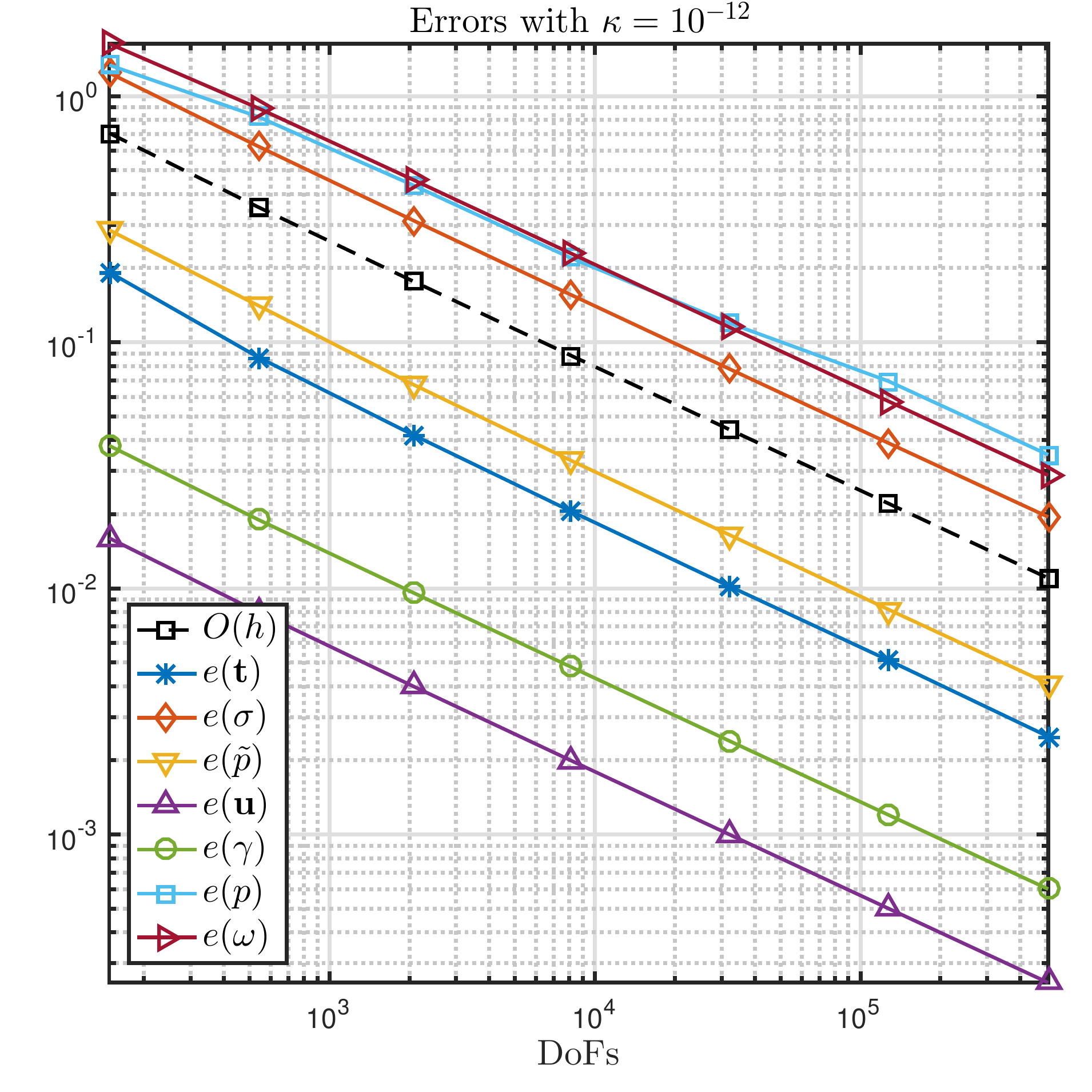}
\includegraphics[width=0.24\textwidth]{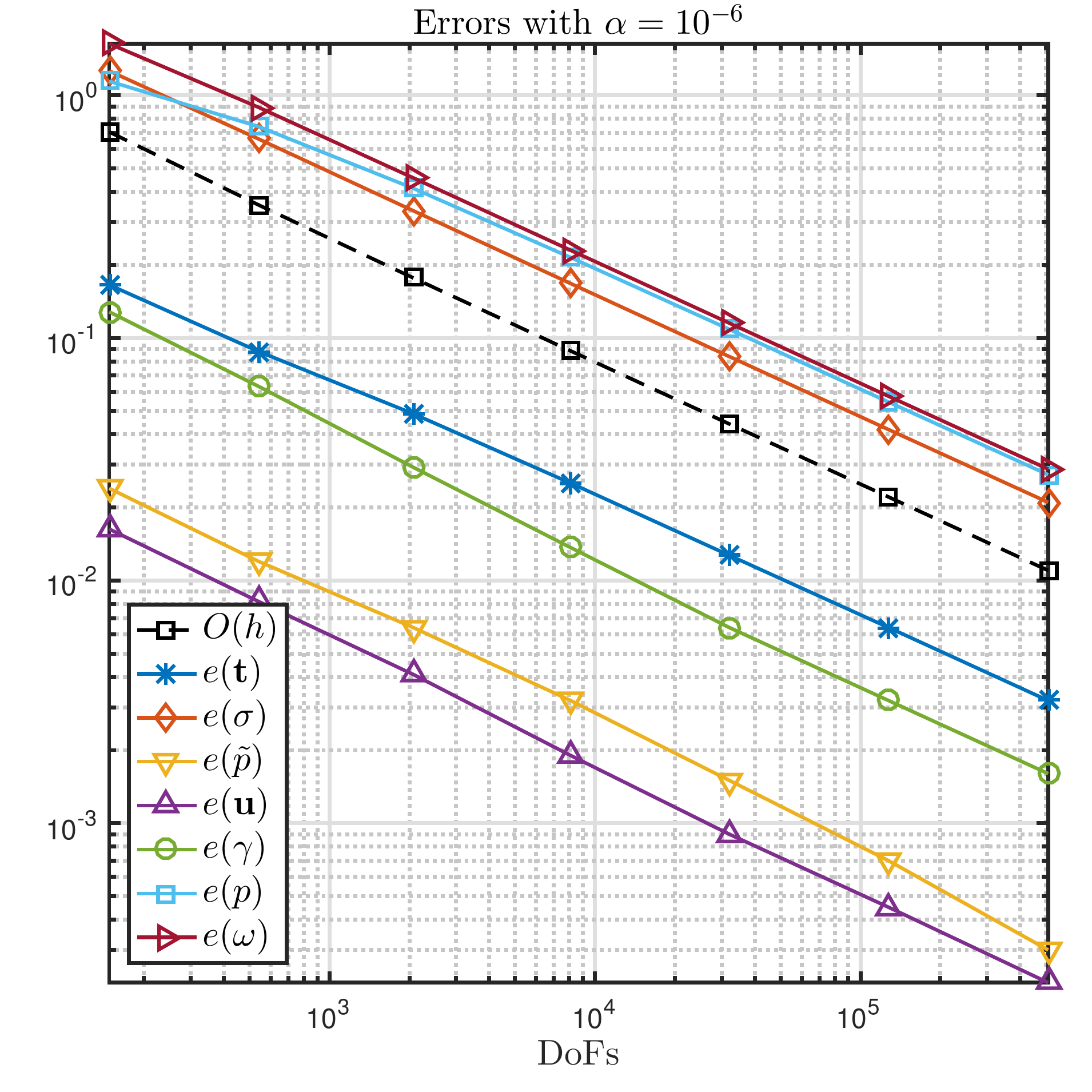}
\includegraphics[width=0.24\textwidth]{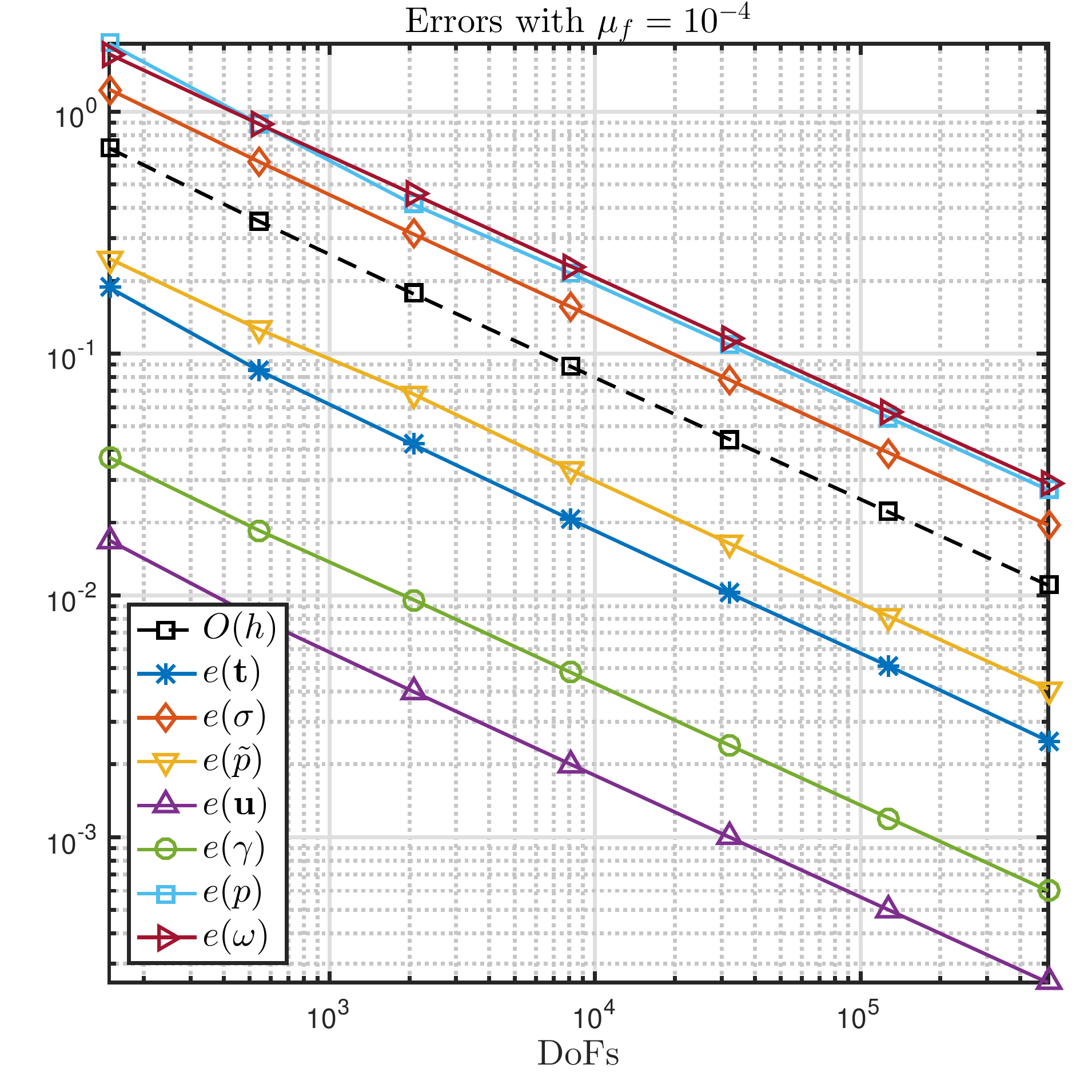}
\end{center}
\caption{Verification of space convergence   and robustness with respect to changes in model parameters $\lambda_s,\kappa,\alpha,\mu_f$.}\label{fig:h}
\end{figure}

\subsection{Example 2: Testing the effect of stress-hindered diffusion} 
In this test we compare the filtration effects on models using different degrees of stress-modified diffusion where the diffusion due to stress is lower than the constant base-line diffusion. This simple test emphasises the significance of anisotropy and heterogeneity in the transport of tracer and it is more illustrative to consider the time-dependent case (adding a time derivative of the first two terms in the second equation of \eqref{poro1} and of the first term in \eqref{eq:w0}, which we discretise using backward Euler's method with a constant time step. For this we employ a slab of tissue of size 1\,mm$^2$, and the model parameters are as follows  (see, e.g., \cite{holter17,sykova08,budday2015mechanical})
$$
E = 800\,\text{Pa}, \quad 
\nu = 0.495,\quad 
c_0 = 2\times 10^{-8}, \quad 
\kappa = 10^{-8}\,\text{mm}^2,\quad 
\alpha = 1, \quad \ell = 0,\quad 
\rho_s = 10^{-3}\,\text{mg/mm}^3,$$ $$
\mu_f = 0.7\,\text{Pa s},\quad \beta = 0.45,\quad 
\phi = 0.2, \quad D_0 = 5.3\times 10^{-5}\text{mm}^2/\text{s},$$  
where $D_0$ is made sufficiently large to compare with stress-hindered effects. On the top of the slab we impose a traction $\bsigma\nn =  - \alpha p_{\mathrm{top}}\nn$ with $p_{\mathrm{in}} = 0.5\,\mathrm{atan}(t/10)$\,mmHg/mm,  on the bottom we clamp the tissue $\bu = \cero$, and on the vertical walls we set zero traction. The fluid pressure $p_{\mathrm{in}}$ is prescribed on the top segment and set to $p_0 = 9$\,mmHg/mm on the vertical walls, whereas we set zero normal flux on the bottom. The tracer concentration $\omega_{\mathrm{in}} = 1$\,mmol is fixed on the top and zero diffusive flux is considered elsewhere on the boundary. A coarse mesh with 16384 triangular elements is employed and the time step is $\Delta t = 50$\,s.

\begin{figure}[!t]
\begin{center}
\includegraphics[width = 0.23\textwidth]{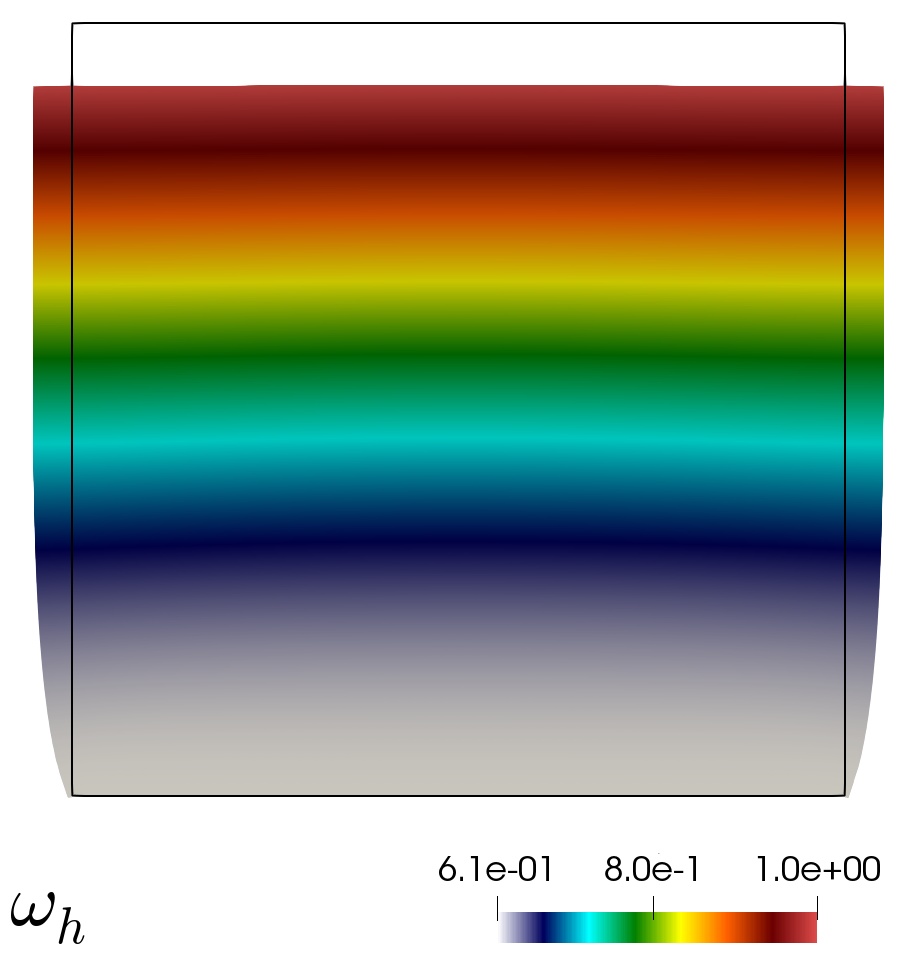}
\includegraphics[width = 0.23\textwidth]{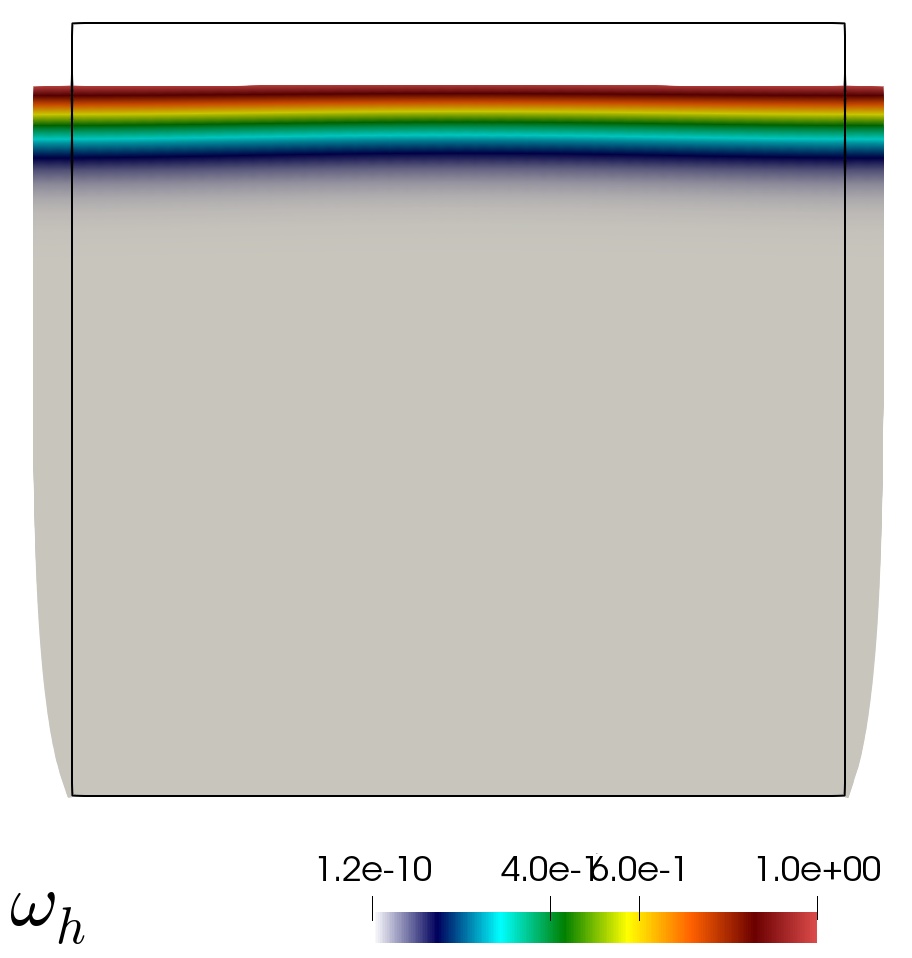}
\includegraphics[width = 0.23\textwidth]{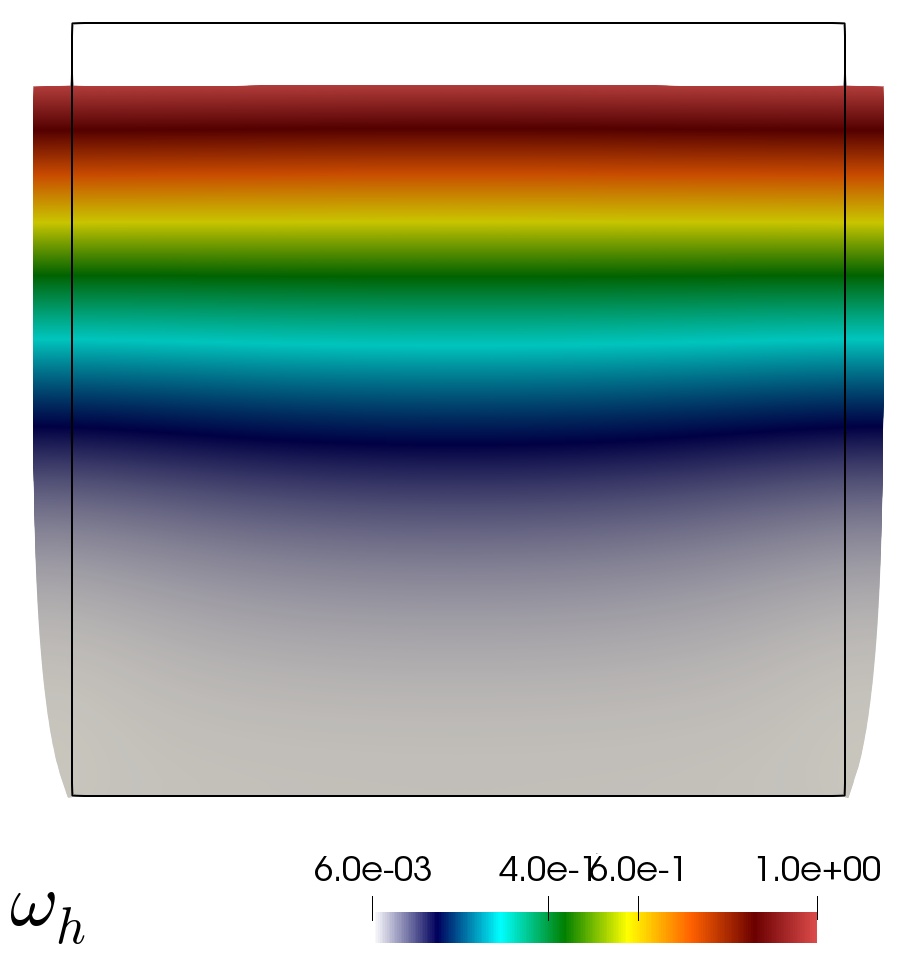}\\
\includegraphics[width = 0.23\textwidth]{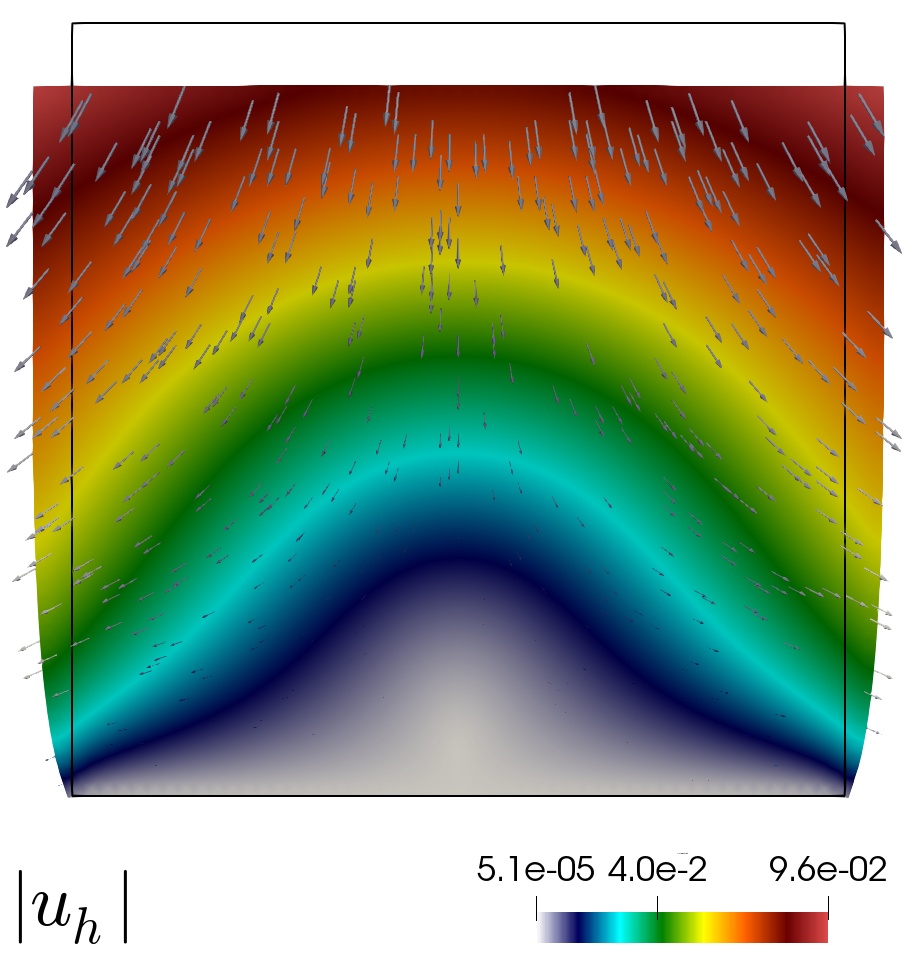}
\includegraphics[width = 0.23\textwidth]{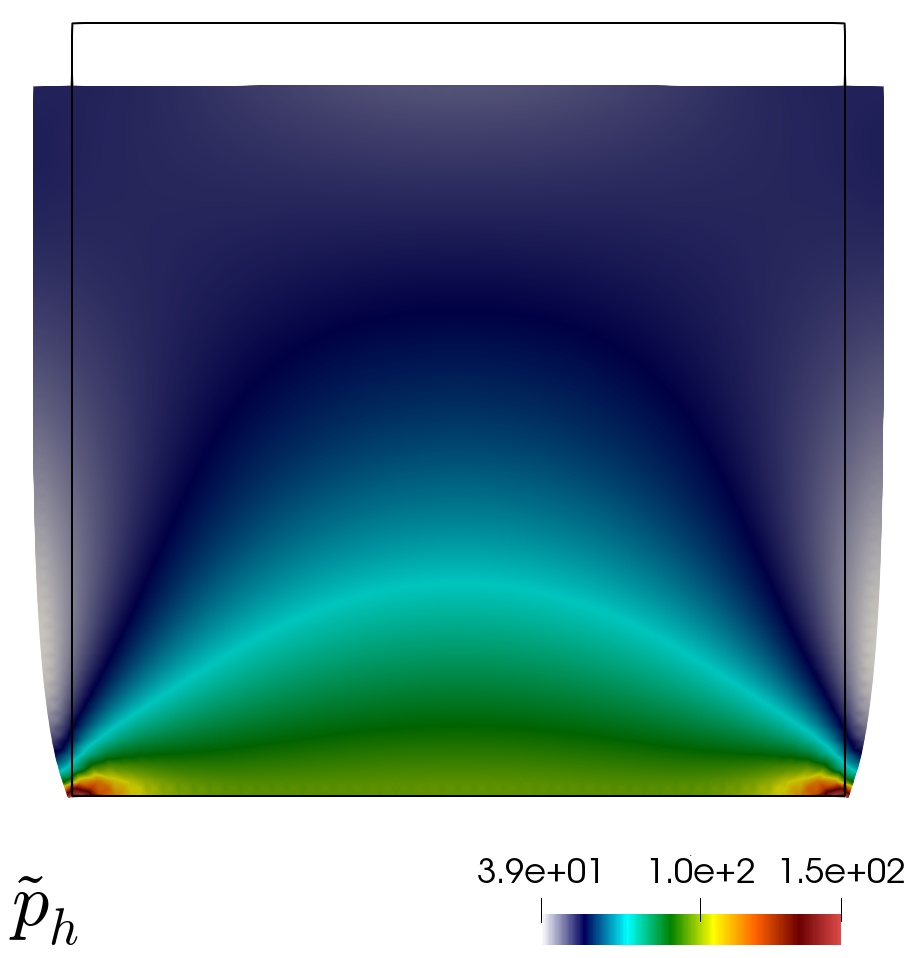}
\includegraphics[width = 0.23\textwidth]{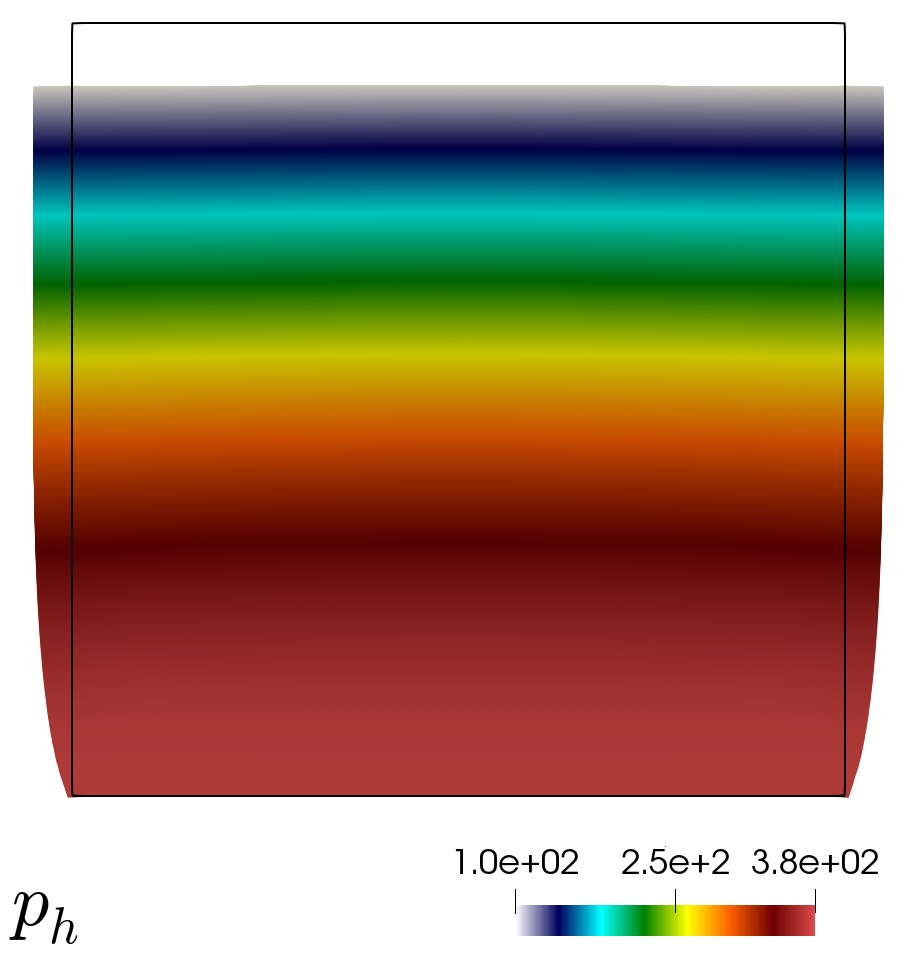}\\
\includegraphics[width = 0.23\textwidth]{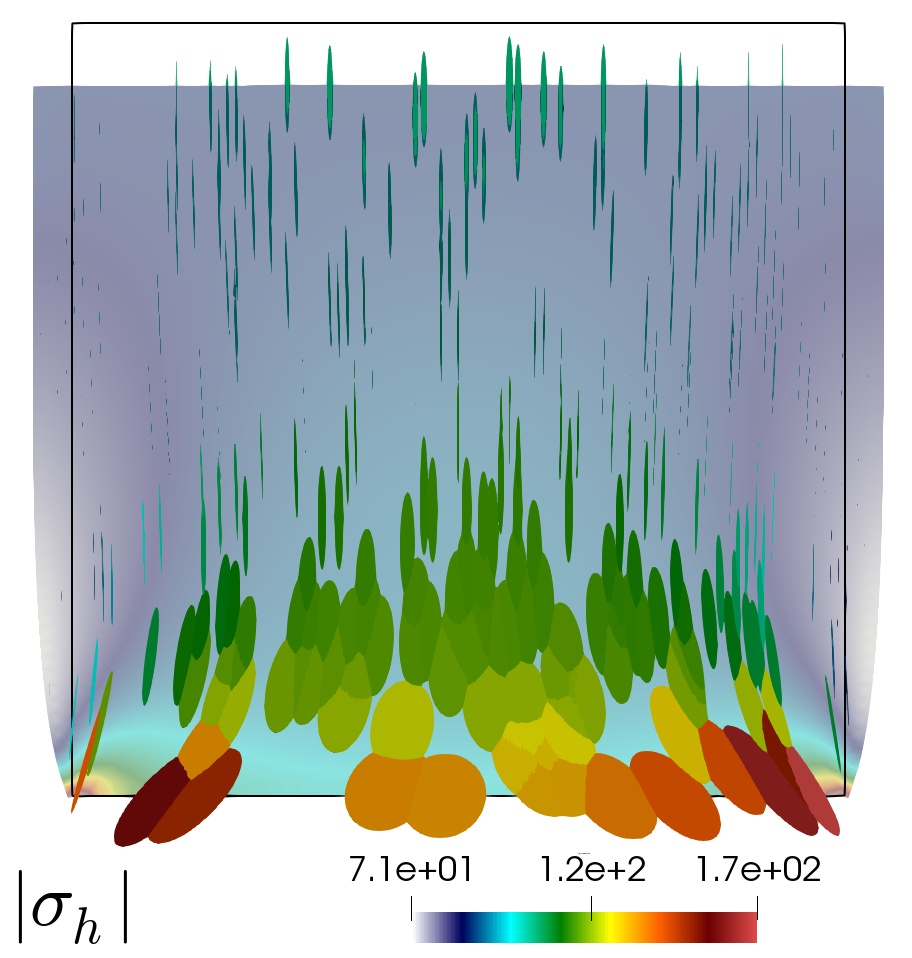}
\includegraphics[width = 0.23\textwidth]{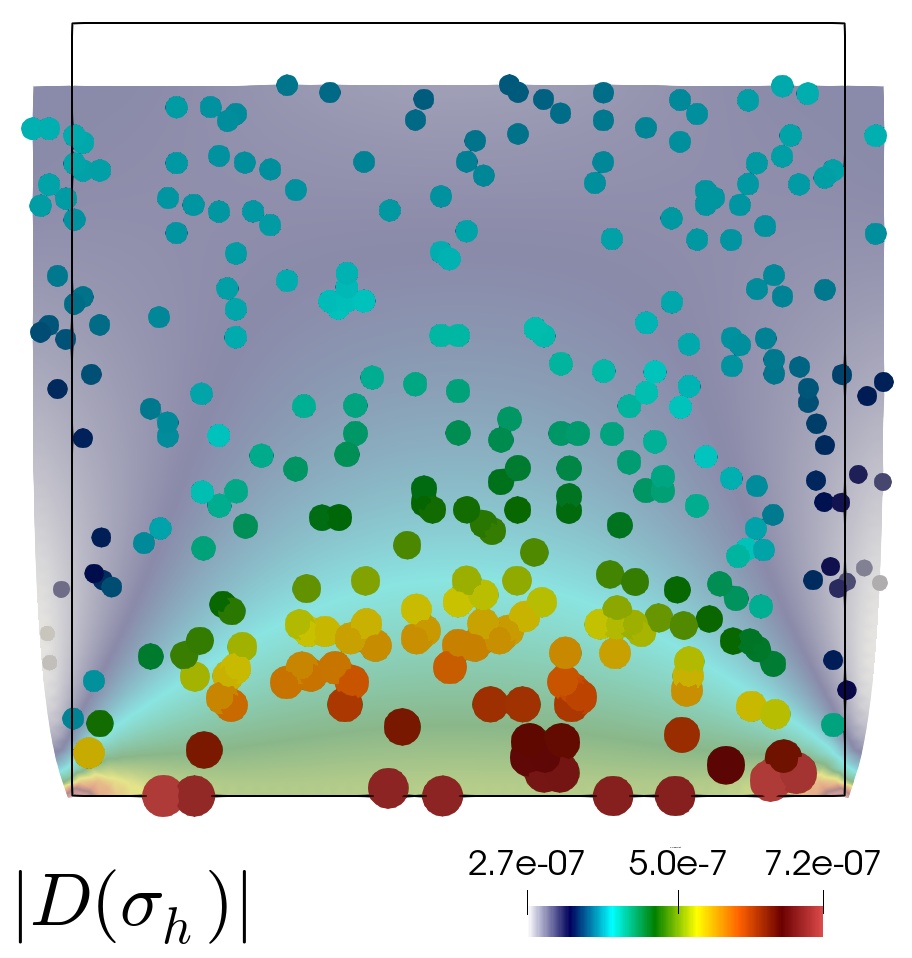}
\includegraphics[width = 0.23\textwidth]{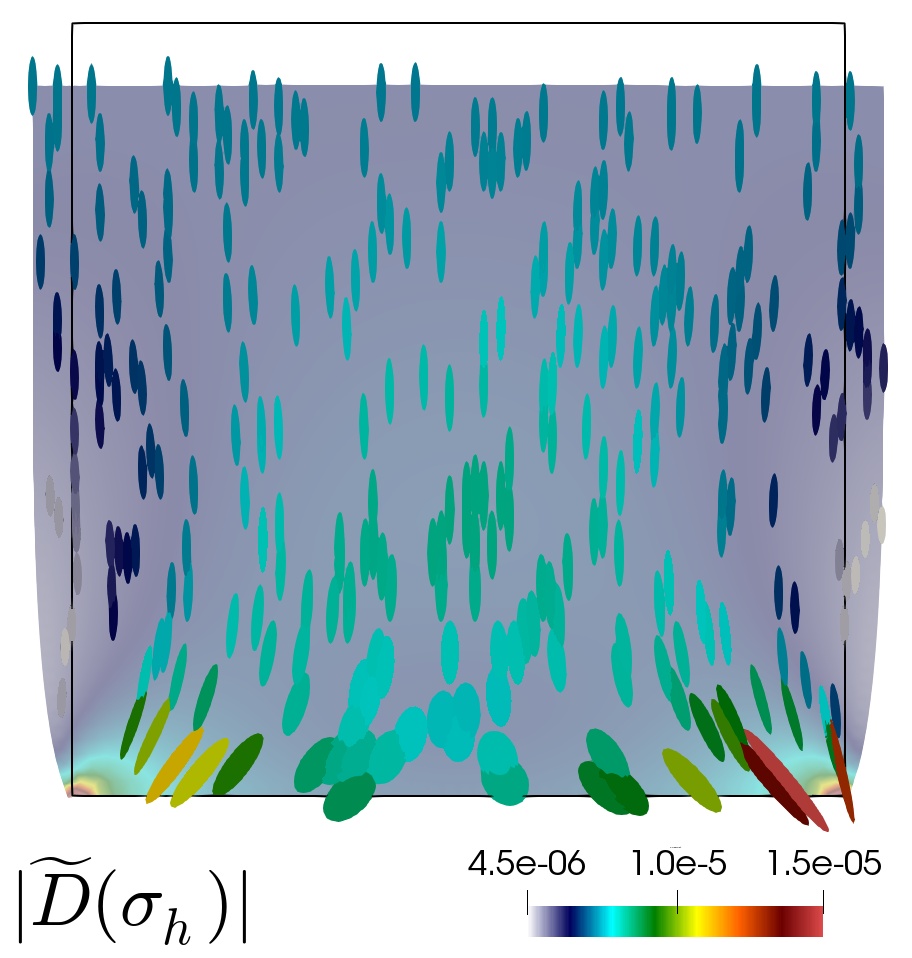}
\end{center}
\caption{Example 2. Stress-assisted diffusion. Top: Filtration of the CSF tracer into the slab of tissue for pure reaction-diffusion (left), and stress-hindered diffusion according to \eqref{eq:D} (centre), and the anisotropic form \eqref{eq:Dtilde} (right). Middle row: approximate displacement (left), total pressure (centre), and fluid pressure (right). Bottom: tensor glyph representation on the deformed configuration, of the total Cauchy stress (left), the isotropic diffusion (centre), and anisotropic one (right). All snapshots are taken at time $t=1800$\,s.} 
\label{fig:ex02}
\end{figure}

Three main scenarios are considered: (a) Pure reaction-diffusion of the tracer with $D_0$; (b) Adding the effect of isotropic stress-assisted diffusion according to 
\begin{equation}\label{eq:D} 
{D}(\bsigma) = D_0\mathbb{I} + D_0 \exp(-\eta_0\tr\bsigma)\mathbb{I},
\end{equation}
with $\eta_0 = 5\times 10^{-5}$; and (c) Modifying the functional form of the apparent diffusion to 
\begin{equation}\label{eq:Dtilde}
\tilde{D}(\bsigma) = \eta_0 D_0\mathbb{I} - \eta_2 D_0 \bsigma + \eta_2D_0 \bsigma^2,
\end{equation}
with $\eta_1 =0.02$ and $\eta_2 = 10^{-5}$. 
Figure~\ref{fig:ex02} shows the results from these tests. The first row illustrates how with pure diffusion (without stress-hinderance) the tracer penetrates faster than in the cases with stress-dependent diffusion (this is shown at $t = 1800$\,s). The deformation of the parenchymal slab using case (c) is shown in the second row, where we can observe a localisation of total pressure accumulation near the bottom corners of the domain. The bottom row indicates the degree of anisotropy of the total poroelastic stress and of the stress-altered diffusivity tensors according to \eqref{eq:D} and \eqref{eq:Dtilde}, also at $t = 1800$\,s. In case (c), apart from a slower diffusion than in case (a), a slight deviation from plane solute transport is seen (in the top-right panel of the figure), explained by the anisotropy differences exhibited in the bottom row of the figure. 

\subsection{Example 3: Stress-hindered diffusion and transport in the brain}
Next we perform two application tests investigating the filtration properties of parenchymal brain tissue and the evolution of tracer concentration in its sleeping vs awake state. In the first example, we create  a 2D slab (1\,mm deep, 1 $\mu$m wide) of a mouse brain, and simulated tracer enrichment from the cortical surface into the cortex. The diffusion coefficient was here assumed to take the form 
\begin{equation}\label{eq:D2} 
D = D_0 - D_0\exp(-\eta|\tr(\bsigma)|).
\end{equation}

\begin{figure}[!t]
\begin{center}
\includegraphics[width = 0.75\textwidth]{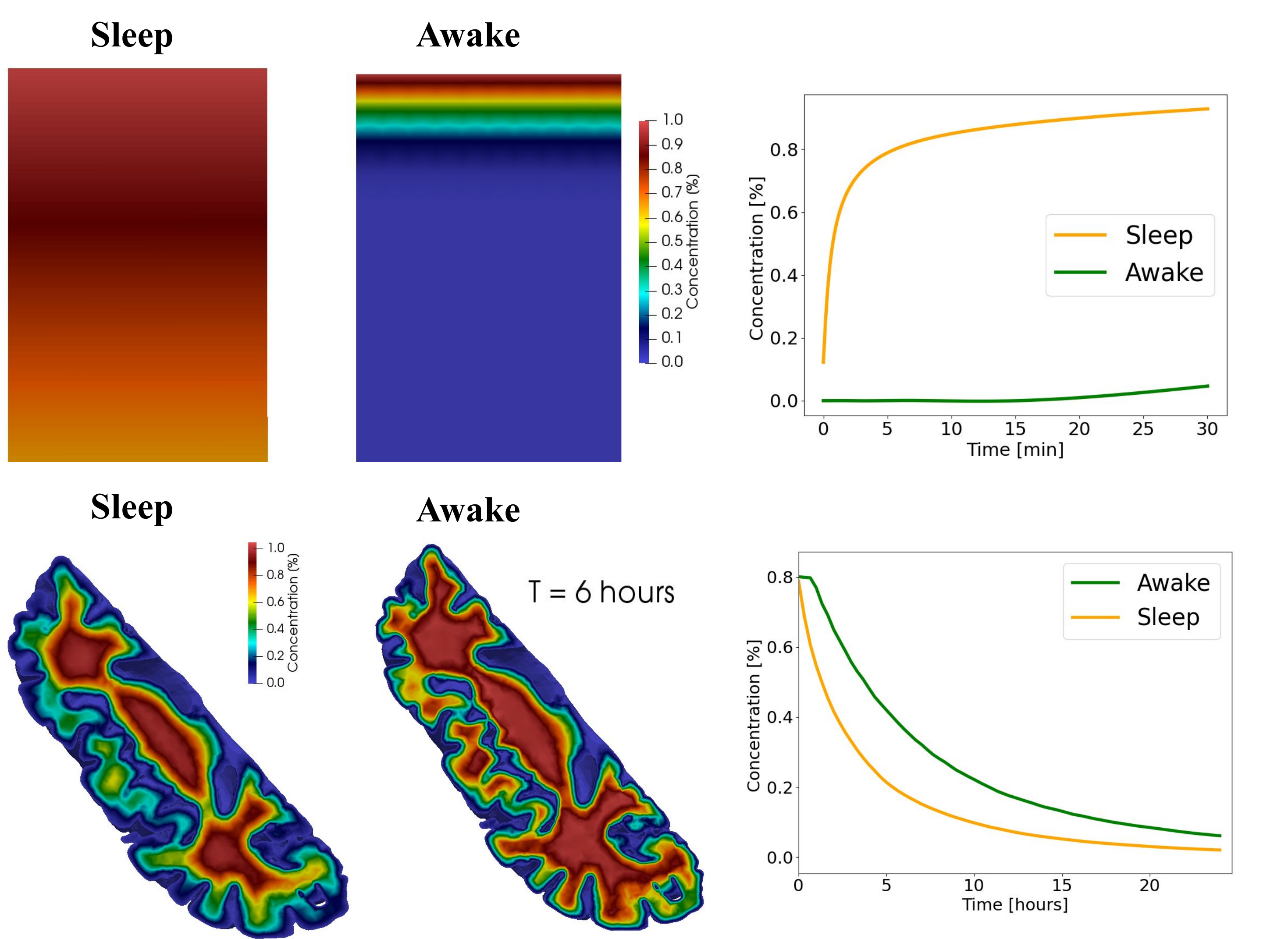}
\end{center}
\caption{Example 3. Tracer infiltration in the mouse cortex (top panel) and tracer clearance from the human brain (bottom panel) using the stress-hindered formulation as described by \eqref{eq:D2}. In the top-left we compare the sleeping versus awake state after 15 minutes of transport from the mouse cortex 300 $mu$m into the tissue. The concentration was held fixed at the cortex. To the right, the shown time evolution 100 $\mu$m below the cortex indicates that transport by stress-hindered diffusion is an order of magnitude lower than for free diffusion within the cortex. In the bottom panel we show the tracer distribution in a brain slice of a human brain. In the left panel is the tracer distribution after 12 hours of diffusive transport. In the awake state, stress hindered diffusion results in slightly slower clearance. In the right bottom panel, the average concentration for the entire piece of brain tissue is plotted over time.} 
\label{fig:ex03}
\end{figure}

At the cortex we set a given concentration of $\omega = 1$, while other boundaries were assigned homogeneous Neumann conditions and no initial tracer was assumed within the tissue. The depth (1 mm) was assumed to cover the entire width of the mouse cortex. Over time we measured the concentration (100 $\mu$m into the tissue) similarly to the results reported by \cite{xie2013sleep}. In the awake state, the porosity was set to $\phi = 0.14$, while in the awake state we used $\phi = 0.23$~\cite{xie2013sleep}. We assumed that the known change in volume fraction was associated with a similar change in the width of the cerebral cortex, which is known to change during sleep~\cite{elvsaashagen2017evidence}. Simulating the (compressed) awake state, we therefore added a force $\bsigma \nn = -p_0 \arctan(\frac{t}{50s})\nn$, where $p_0 = 0.5$\,mmHg to the top of the domain, while the bottom was fixed. The arctan function was used to ensure a smooth loading. In the sleeping state, no forces were applied, and tracers were thus allowed to diffuse freely according to the base diffusion $D_0$. In both cases the sidewalls were assigned Neumann conditions. The resulting displacement at the cortex surface was 0.09\,mm (data now shown). In Figure \ref{fig:ex03} (top panel) we show the resulting tracer concentration. We compare tracer concentrations after 15 minutes in the awake versus the sleeping state, from 0 to 300\,$\mu$m into the cortex (left), and show the time evolution of tracer concentration at a slice 100\,$\mu$m below the surface (right). The applied forces in the awake state significantly slows down transport into mouse cortex. 
Other parameters were here set as follows (see also~\cite{vinje2020intracranial, budday2015mechanical}) 
%
{\small
$$E = 800 \,\mathrm{Pa}, \quad 
\nu = 0.495,\quad 
c_0 = 2\times10^{-8}\,\text{Pa}^{-1}, \quad 
\kappa = 10^{-8}\,\text{mm}^2,\quad 
\alpha = 1, \quad \ell = 0, \quad \eta = 2\times 10^{-5},$$$$
\rho_s = 10^{-3}\,\text{g/mm}^3,\quad
\mu_f = 0.7 \times 10^{-3}\,\text{Pa}/\text{s},\quad \beta = 0.35,\quad 
\quad D_0 = 5.3\times 10^{-2}\text{mm}^3/\text{s}.$$}
In the second scenario, we simulate tracer clearance from the human brain using a 3D mesh generated from the the right hemisphere of the built-in subject Bert in FreeSurfer~\cite{fischl2012freesurfer}. The mesh consisted of 400,000 cells, and the resulting system had 15M degrees of freedom. Here we have used an initial condition of $\omega = 0$ everywhere in the brain, while we set $\omega = 0$ on the brain surface. Parameters were set exactly as in the previous example, except that now $\eta = 2 \times 10^{-1}$ and the cortex pressure is $p_0 = 0.01$ mmHg. On the ventricles we imposed no-slip conditions. In Figure \ref{fig:ex03} (bottom panel) we show the tracer concentration in the sleeping versus awake brain after 12 hours of transport (left), and also show the time evolution of the average tracer concentration within the slice shown (right). Transport rates differ between the sleeping versus the awake state, but occurs on approximately the same time scale \cite{valnes2020apparent} in contrast to the mouse test case. Our results from the human brain test case thus reflects differences in clearance in the awake versus sleeping state observed experimentally~\cite{eide2021sleep}.


We close this section mentioning that next steps in the process of model refinement include the study of interfacial flow between the poroelastic structure (at the macroscale) coupled with the surrounding layer of CSF and the specific waste disposal system, which is still not well understood. For this we will also use multiple network models \cite{hong19,piersanti20}, the preconditioning framework for perturbed saddle-point problems from \cite{boon21}, and interface couplings such as those in \cite{bukac15,li20,taffetani20}. 
Other envisaged generalisations deal with large deformations and kinematics through a more complete mixture theory following, e.g., \cite{lang16,costanzo17}.

\bibliographystyle{siam}
\bibliography{gmrv-bib}
\end{document}